\newcommand\cyr{%
\renewcommand\rmdefault{wncyr}%
\renewcommand\sfdefault{wncyss}%
\renewcommand\encodingdefault{OT2}%
\normalfont
\selectfont}
\DeclareTextFontCommand{\textcyr}{\cyr}
\DeclareFontFamily{OML}{rsfs}{\skewchar\font'177}
\DeclareFontShape{OML}{rsfs}{m}{n}{ <5> <6> rsfs5 <7> <8> <9> rsfs7
  <10> <10.95> <12> <14.4> <17.28> <20.74> <24.88> rsfs10 }{}
\DeclareMathAlphabet{\mathfs}{OML}{rsfs}{m}{n}
\newtheorem{thm}{Theorem}[section]
\newtheorem{lem}[thm]{Lemma}
\newtheorem{prop}[thm]{Proposition}
\newtheorem*{theorem*}{Theorem}
\newtheorem*{example*}{Example}
\numberwithin{equation}{section}
\renewcommand{\mod}{\mbox{$\,\mathrm{mod}\,$}}
\renewcommand{\epsilon}{\varepsilon}
\def\wt{\widetilde}
\def\text#1{\textrm{#1}}
\def\dsum{\sum}
\def\emptyset{\varnothing}
\def\dfrac#1#2{\frac{#1}{#2}}
\def\vf{\varphi}
\def\b{\beta}
\def \R{\mathbb R}
\def \N{{\mathbb N}}
\def\E{\mathbb E}
\def \Z{\mathbb Z}
\def \T{\mathbb T}
\def\ov{\overline}
\def\un{\underline}
\def\Q{\mathbb Q}
\def\cA{{\mathcal{A}}}
\def\cF{{\mathcal{F}}}
\def\({\biggl(}
\def\){\biggr)}
\def\<{\bold\langle}
\def\>{\bold\rangle}
\def\eps{{\varepsilon}}
\def\Prob{{\mathbb{P}}}
\DeclareMathOperator\const{const}
\DeclareMathOperator\dist{dist}
\DeclareMathOperator\sgn{sgn}
\def\DS{\displaystyle}
\title[No temporal DLT for a.e. irrational translation]{No temporal distributional limit theorem for a.e. irrational translation}
\author{Dmitry Dolgopyat and Omri Sarig }
\keywords{}
\subjclass[2010]{37D25 (primary), 37D35 (secondary)}
\address{Department of Mathematics\\ University of Maryland at College Park, College Park, MD 20740, USA}
\email{dmitry@math.umd.edu}
\address{Faculty of Mathematics and Computer Science\\ The Weizmann Institute of Science\\ POB 26, Rehovot, Israel}
\email{omsarig@gmail.com}
\thanks{This work was partially supported by the ISF grant 199/14 and the BSF grant 2016105. 
The authors thank Adam Kanigowski for simplifying the statement and the proof of Lemma \ref{LmDV}. }
\def\tilde{\widetilde}
\def\bar{\overline}
\def\dist{{\rm dist}}
\def\eps{{\varepsilon}}
\def\mes{{\rm mes}}
\def\Card{{\rm Card}}
\def\Prob{{\mathbb{P}}}
\def\Tor{\mathbb{T}}
\def\bra{{\bar a}}
\def\brb{{\bar b}}
\def\brl{{\bar l}}
\def\brq{{\bar q}}
\def\brp{{\bar p}}
\def\brq{{\bar q}}
\def\breps{{\bar\eps}}
\def\cA{\mathcal{A}}
\def\cB{\mathcal{B}}
\def\cC{\mathcal{C}}
\def\cF{\mathcal{F}}
\def\cG{\mathcal{G}}
\def\cE{\mathcal{E}}
\def\cN{\mathcal{N}}
\def\fS{\mathfrak{S}}
\def\fT{\mathfrak{T}}
\def\fb{\mathfrak{b}}
\def\hK{{\hat K}}
\def\heps{{\hat{\eps}}}
\def\tK{{\tilde K}}
\def\beq{\begin{equation}}
\def\eeq{\end{equation}}
\begin{document}
\begin{abstract}
Bromberg and Ulcigrai constructed piecewise smooth functions on the torus such that the set of $\alpha$ for which the sum $ \sum_{k=0}^{n-1}f(x+k\alpha \mod 1)$ satisfies a temporal distributional limit theorem along the orbit of a.e. $x$ has  Hausdorff dimension one. We show that the Lebesgue measure of this set is equal to zero.
\end{abstract}

\maketitle
\section{Introduction and statement of main result}
\subsection{Background}
Suppose $T:X\to X$ is a map, $f:X\to\R$ is a function, and $x_0\in X$ is a {\em fixed}  initial condition. We say that the {\em $T$--ergodic sums  }$S_n=f(x_0)+f(Tx_0)+\cdots+f(T^{n-1} x_0)$ satisfy a {\em temporal distributional limit theorem (TDLT) on the orbit of $x_0$}, if there exists a non-constant real valued random variable $Y$, centering constants $A_N\in\R$ and scaling constants $B_N\to\infty$  s.t.
\begin{equation}\label{TDLT}
\frac{S_n-A_N}{B_N}\xrightarrow[N\to\infty]{} Y\text{ in distribution},
\end{equation}
when $n$ is sampled uniformly from $\{1,\ldots,N\}$ and $x_0$ is fixed. Equivalently, for every Borel set $E\subset\R$ s.t. $\Prob(Y\in\partial E)=0$,
$$
\frac{1}{N}\Card\{1\leq n\leq N: \frac{S_n-A_N}{B_N}\in E\}\xrightarrow[N\to\infty]{}\Prob(Y\in E).
$$
We allow and expect $A_N, B_N, Y$ to depend on $T,f,x_0$.

Such limit theorems have been discovered for several zero entropy uniquely ergodic transformations, including systems  where the more traditional spatial limit theorems, with $x_0$ is sampled from a measure on $X$, fail \cite{Beck-Giant-Leap-1,Beck-Giant-Leap-2, Avila-Dolgopyat-Duryev-Sarig, Dolgopyat-Sarig-JSP, Paquette-Son, Dolgopyat-Sarig-Horocycle-Winding}. Of particular interest are TDLT for
$$
R_\alpha:[0,1]\to [0,1],\    R_\alpha(x)=x+\alpha\mod 1,\  f_\beta(x):=1_{[0,\beta)}(x)-\beta,
$$
because the $R_\alpha$--ergodic sums of $f_\beta$ along the orbit of $x$ represent the discrepancy of the sequence $x+n\alpha\mod 1$ with respect to  $[0,\beta)$ \cite{Schmidt-Irregularity,Conze-Keane-Cylinder-Flow,Beck-Giant-Leap-1}. Another source of interest is  the connection to the ``deterministic random walk" \cite{Aaronson-Keane, Avila-Dolgopyat-Duryev-Sarig}.

The validity of the TDLT for $R_\alpha$ and $f_\beta$ depends on the diophantine properties of $\alpha$ and $\beta$.
Recall that $\alpha\in (0,1)$ is {\em badly approximable} if for some $c>0$, $|q\alpha-p|\geq c/|q|$ for all irreducible fractions $p/q$. Equivalently, the digits in the continued fraction expansion of $\alpha$ are bounded \cite{Khintchine-Continued-Fractions}.  Say that  $\beta\in (0,1)$ is {\em badly approximable with respect to $\alpha$} if for some $C>0$,
$
|q\alpha-\beta-p|>C/|q|\text{ for all }p,q\in\Z, q\neq 0.
$
If $\alpha$ is badly approximable then every $\beta\in \mathbb Q\cap (0,1)$ is  badly approximable with respect to $\alpha$. The recent paper \cite{Bromberg-Ulcigrai} shows:

\begin{thm}[Bromberg \& Ulcigrai]
Suppose $\alpha$ is badly approximable and $\beta$ is badly approximable with respect to $\alpha$, e.g. $\beta\in\Q\cap (0,1)$. Then  the $R_\alpha$-ergodic sums of $f_\beta$ satisfy a temporal distributional limit theorem with Gaussian limit on the orbit of every initial condition.
\end{thm}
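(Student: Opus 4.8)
\emph{Proof sketch.} The plan is to renormalize the ergodic sums through the continued fraction algorithm of $\alpha$, reducing the temporal distribution of $S_nf_\beta$ along the orbit of $x$ to a central limit theorem for a sum of $\asymp\log n$ uniformly bounded, weakly dependent pieces indexed by the continued fraction scales of $\alpha$. First I would rewrite the ergodic sum as a discrepancy, $S_nf_\beta(x)=D_n(x,\beta):=\#\{0\le k<n:\ x+k\alpha\in[0,\beta)\}-n\beta$, let $(q_j)_{j\ge0}$ be the denominators of the convergents of $\alpha$, and expand $n$ in its Ostrowski representation $n=\sum_{j=0}^{k(n)}b_jq_j$ with $0\le b_j\le a_{j+1}$. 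Iterating the cocycle identity $S_{a+b}f_\beta(x)=S_af_\beta(x)+S_bf_\beta(R_\alpha^a x)$ along this expansion splits the sum into scale blocks,
\[
S_nf_\beta(x)=\sum_{j=0}^{k(n)}\Psi_j(n,x),\qquad
\Psi_j(n,x):=\sum_{i=0}^{b_j-1}S_{q_j}f_\beta\bigl(R_\alpha^{\,\mu_j(n)+iq_j}x\bigr),\quad \mu_j(n):=\sum_{\ell>j}b_\ell q_\ell .
\]
By the Denjoy--Koksma inequality $\|S_{q_j}f_\beta\|_\infty\le\Var(f_\beta)=2$ and $\int_0^1 S_{q_j}f_\beta\,dx=0$ for every $j$, while badly approximable $\alpha$ forces $a_{j+1}\le M$, hence $b_j\le M$ and $\|\Psi_j\|_\infty\le 2M$. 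In particular $S_nf_\beta(x)=O(\log n)$ uniformly in $x$, which fixes the scaling at order $\sqrt{\log N}$.

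Next, sampling $n$ uniformly from $\{1,\dots,N\}$, I would use that the Ostrowski digits $(b_j)_{j\le k(N)}$ together with the translation amounts $\mu_j(n)$ obey the law of the renormalization dynamics --- the natural extension of the Gauss/Ostrowski map --- which, $\alpha$ being badly approximable, is confined to a compact piece of parameter space where it mixes uniformly at an exponential rate, while the base points $R_\alpha^{\mu_j(n)}x$ equidistribute on the circle and so wash out the dependence on the initial point $x$. Thus $(\Psi_j)_{j\le k(N)}$ is a uniformly bounded, approximately stationary, exponentially weakly dependent array of length $\asymp\log N$; with $A_N:=\EXP_n[S_nf_\beta(x)]$ and $B_N:=\sqrt{\Var_n(S_nf_\beta(x))}$, the blocking argument for mixing sequences (equivalently a martingale approximation, or the characteristic-function method with decoupling) gives $(S_nf_\beta(x)-A_N)/B_N\to\mathcal N(0,1)$ in distribution as soon as $B_N\to\infty$. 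Since $B_N^2\asymp\sum_{j\le k(N)}\Var_n(\Psi_j)$ (cross-scale covariances being summable by mixing), the whole theorem comes down to the lower bound $\sum_{j\le k}\Var_n(\Psi_j)\gtrsim k$.

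Proving this lower bound is where I expect the real difficulty to lie, and it is exactly where the hypothesis that $\beta$ is badly approximable with respect to $\alpha$ is used. By the three-gap theorem the discontinuities of $x\mapsto D_{q_j}(x,\beta)$ caused by the endpoint $0$ form a grid with gaps $\asymp 1/q_j$, and those caused by $\beta$ form the same grid shifted by $\beta$; the condition $|q'\alpha-\beta-p|>C/|q'|$ forces $\beta$ to sit at distance $\gtrsim C/q_j$ from every point $\{i\alpha:0\le i<q_j\}$, so the two grids are not almost aligned and $D_{q_j}(\cdot,\beta)$ is genuinely non-constant, its oscillation being realized on a set of $x$ of measure bounded below, for a positive density of scales $j$. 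Combined with non-degeneracy of the distribution of the Ostrowski digit $b_j$ --- and a check that the small internal translations by $q_j\alpha$ cannot conspire to cancel the oscillation once one also averages over $n$ --- this yields $\Var_n(\Psi_j)\ge c>0$ for a positive-density set of $j$, hence $B_N^2\gtrsim\log N\to\infty$. Beyond this non-degeneracy, the other delicate point is quantifying the weak dependence of the blocks precisely enough to run the mixing CLT, in the presence of the carries generated by Ostrowski addition and of the coupling of block $j$ to the higher digits through $\mu_j(n)$; since all the relevant estimates (Denjoy--Koksma, boundedness of partial quotients, equidistribution of base points) are uniform in $x$, one obtains the conclusion for the orbit of every initial condition.
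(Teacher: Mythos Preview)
The paper does not prove this theorem: it is quoted as background and attributed to Bromberg and Ulcigrai via the citation \cite{Bromberg-Ulcigrai}, with no proof or sketch given here. So there is nothing in the present paper to compare your proposal against.

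That said, your outline is in the right spirit and broadly matches the strategy of Beck and of Bromberg--Ulcigrai: Ostrowski decomposition of $n$, Denjoy--Koksma to bound each scale block, bounded partial quotients to get uniformly bounded summands of count $\asymp\log N$, and a mixing/martingale CLT for the resulting array. Two places where your sketch is vaguer than an actual proof would need to be are (i) the precise sense in which the blocks $\Psi_j$ form an exponentially mixing array under uniform sampling of $n$ --- the Ostrowski digits are \emph{not} i.i.d.\ and the carries couple adjacent scales, so one needs an explicit Markov/Gibbs description of the digit process and a quantitative decoupling estimate; and (ii) the non-degeneracy of the variance, where the argument ``the two grids are not almost aligned'' needs to be turned into a genuine lower bound $\Var_n(\Psi_j)\ge c>0$ for a positive density of $j$, uniformly in $x$. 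Both points are where the substantive work in \cite{Bromberg-Ulcigrai} lies, and your sketch correctly identifies them as the hard steps without actually carrying them out.
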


The set of badly approximable $\alpha$  has  Hausdorff dimension one \cite{Jarnik}, but  Lebesgue measure zero \cite{Khintchine-A.S.-approximable}. This leads to the following
 question:
 {\em Is there a $\beta$ s.t. the $R_\alpha$--ergodic sums of $f_\beta$ satisfy a temporal distributional limit theorem for a.e. $\alpha$ and a.e. initial condition?}

In this paper we answer  this question negatively.

\subsection{Main result}

To state our result in its most general form,  we need the following terminology.

Let $\Tor:=\R/\Z$. We say that  $f:\Tor\to\R$ is {\em piecewise smooth} if there exists a finite set
$\fS\subset\Tor$ s.t. $f$ is continuously differentiable on $\Tor\setminus\fS$ and $\exists\psi:\Tor\to\R$ with bounded variation s.t. $f'=\psi$ on $\Tor\setminus\fS$.
For example: $f_\beta(x)=1_{[0,\beta)}(x)-\beta$ (take $\fS=\{0,\beta\}$, $\psi\equiv 0$). We show:

\begin{thm}
\label{ThBeckConverse}
Let $f$ be a  piecewise smooth  function of zero mean.
Then there is a set   of full measure $\cE\subset \T\times \T$ s.t. if $(\alpha, x)\in \cE$ then
the $R_\alpha$--ergodic sums of $f$  do not satisfy a TDLT on the orbit of $x$.
\end{thm}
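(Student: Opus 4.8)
The plan is to peel off the smooth part of $f$, analyse the step‑function part by a renormalization scheme driven by the continued fraction expansion of $\alpha$, and extract a contradiction from the erratic growth of the partial quotients of a.e.\ $\alpha$.

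\emph{Reduction to step functions.} Write $f=g+h$, where $h(x)=\sum_{i\in I}d_i\,\mathbf b(x-s_i)$ with $\mathbf b(x):=\tfrac12-\{x\}$ and the $d_i\ne 0$ the jumps of $f$, so that $g$ is continuous with $g'$ of bounded variation, whence $\widehat g(k)=O(k^{-2})$. Combining the Ostrowski decomposition of $n$ with the a.e.\ estimates $\sum_{k\le q}\|k\alpha\|^{-1}\ll q(\log q)^{O(1)}$ and $a_{j+1}=o(q_j^{\varepsilon})$ one obtains $\sup_{n,x}|S_n(g)(x)|<\infty$ for a.e.\ $\alpha$. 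Hence $S_n(f)=S_n(h)+O(1)$ uniformly, and since a TDLT requires $B_N\to\infty$ it suffices to treat $f=h$ (if $I=\emptyset$ then $S_n(f)=O(1)$ and there is nothing to prove).

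\emph{Renormalization.} Fix $\alpha$ with convergents $p_k/q_k$. For $n=cq_K+n'$, $0\le n'<q_K$, $c\le a_{K+1}$, one has $c\|q_K\alpha\|\le 1/q_K$ and $S_{n'}(h)$ has $O(1)$ variation over any arc of that length, so
\[
S_n(h)(x)=\Delta_K(x;c)+S_{n'}(h)(x)+O(1),\qquad \Delta_K(x;c):=\sum_{i=0}^{c-1}S_{q_K}(h)(x+iq_K\alpha).
\]
Since $h'\equiv-\bar d$ with $\bar d:=\sum_i d_i$, the function $\phi_K:=S_{q_K}(h)$ is piecewise affine of slope $-\bar d\,q_K$ with jumps $d_i$ at the points $s_i-l\alpha$; over a displacement of one level‑$K$ arc these are crossed $r=O(1)$ times, at indices $0<j_1<\dots<j_r<a_{K+1}$, with jump sizes $e_s\in\{d_i\}$, $|e_s|\ge c_0:=\min_i|d_i|>0$. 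Summing gives the exact closed form
\[
\Delta_K(x;c)=\beta_K\,c+\tfrac{\gamma_K}{2}\,c^2+\sum_{s=1}^{r}e_s\,(c-j_s)_+,\qquad \beta_K:=S_{q_K}(h)(x),\quad \gamma_K\asymp\pm\bar d/a_{K+1},
\]
with $(t)_+:=\max(t,0)$. Thus, for $n$ uniform in $\{1,\dots,mq_K\}$, the law of $S_n(h)(x)$ equals, up to a translation and an $O(1)$ perturbation, the convolution of $\mu_{q_K}:=\mathrm{law}_{n'\le q_K}(S_{n'}(h)(x))$ with the occupation measure of the explicit path $c\mapsto\Delta_K(x;c)$, $0\le c\le m$; and iterating $\mathrm{spread}(\mu_{q_K})\le\mathrm{spread}(\mu_{q_{K-1}})+O(a_K)$ yields $\mathrm{spread}(\mu_{q_K})\lesssim\sum_{j\le K}a_j$.

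\emph{Choosing scales and deriving the contradiction.} By the Borel--Bernstein theorem and a conditional Borel--Cantelli argument for the Gauss map, for a.e.\ $(\alpha,x)$ there are infinitely many $K$ with, simultaneously, $a_{K+1}\big/\sum_{j\le K}a_j\to\infty$ (so $a_{K+1}$ dwarfs $\mathrm{spread}(\mu_{q_K})$ and tends to $\infty$) and $x$ \emph{well centred at level $K$}, i.e.\ $j_1\asymp a_{K+1}$. For such $K$ compare the scales $N'=m'q_K$ and $N''=m''q_K$ with $m',m''\to\infty$: since $\mathrm{spread}(\mu_{q_K})$ and the $O(1)$ term are negligible there, $\tfrac1{B_N}(S_n-A_N)$ at $N=N',N''$ is asymptotic to affine normalizations of the occupation measures of $\Delta_K(x;\cdot)$ on $[0,m']$ and on $[0,m'']$. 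These lie in different affine‑equivalence classes, uniformly along the subsequence: when $\bar d=0$ the path is piecewise linear, and choosing $m'<j_1<m''$ turns a uniform law (or, if $\beta_K\to0$, a point mass, already impossible) into a genuinely two‑level or atom‑bearing law; when $\bar d\ne0$ the quadratic term has range $\asymp|\bar d|\,a_{K+1}$, comparable to the whole path, so a nearly‑linear or parabolic initial segment is compared with the full curved‑and‑kinked path. Since a TDLT forces $\tfrac1{B_N}(S_n-A_N)$ to converge to one and the same non‑degenerate $Y$ along \emph{every} sequence $N\to\infty$, this is a contradiction, proving the theorem.

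\emph{Main obstacle.} The crux is the last, geometric, step: proving that along the selected subsequence the occupation measures of two growing initial segments of the renormalized path $\Delta_K(x;\cdot)$ stay uniformly bounded away from one another's affine orbit — this is exactly where the closed form for $\Delta_K$ and the case analysis in $\bar d$ and in $\beta_K$ are used, and it must be arranged to survive the $O(1)$‑ and $\mu_{q_K}$‑errors. A secondary difficulty is the measure‑theoretic point that $x$ is well centred infinitely often along the ($\alpha$‑dependent) subsequence of large partial quotients, which needs a Fubini/conditional‑Borel--Cantelli argument rather than a naive shrinking‑target estimate, since the relevant bad set at level $K$ has fixed, non‑shrinking measure.
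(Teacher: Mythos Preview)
Your reduction to step functions and the closed form $\Delta_K(x;c)=\beta_Kc+\tfrac{\gamma_K}{2}c^2+\sum_s e_s(c-j_s)_+$ are correct and close to what the paper uses. The gap is in the line ``since $\mathrm{spread}(\mu_{q_K})$ and the $O(1)$ term are negligible there''.

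For the law at $N'=m'q_K$ with $m'<j_1$ to be (up to affine normalization) the occupation measure of $c\mapsto\Delta_K(x;c)$ on $[0,m']$, you need the range of $\Delta_K$ on that interval, namely $|\beta_K|\,m'+O(m'^2/a_{K+1})$, to dominate $\mathrm{spread}(\mu_{q_K})\asymp\sum_{j\le K}a_j$. Nothing in your argument bounds $|\beta_K|=|S_{q_K}(h)(x)|$ from below; Denjoy--Koksma gives only an upper bound, and $\beta_K$ can tend to zero along the very subsequence of large $a_{K+1}$ you select. Your parenthetical ``if $\beta_K\to0$, a point mass, already impossible'' is incorrect: when $\beta_K\to0$ and $m'<j_1$ one has $\Delta_K(x;c)=o(1)$ for $c\le m'$, so the distribution of $S_n$ at $N'$ is $\mu_{q_K}+O(1)$, and this is perfectly compatible with a TDLT with $B_{N'}\asymp\mathrm{spread}(\mu_{q_K})$. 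There is no contradiction at that scale, and attempts to fix this by moving both $m',m''$ past $j_1$ run into the possibility that successive slopes $\beta_K+e_1,\beta_K+e_1+e_2,\dots$ coincide or vanish (e.g.\ when jumps cancel), so the case analysis you defer as the ``main obstacle'' does not close.

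This is exactly the difficulty the paper spends most of its effort on. It introduces a syndetic set $\cN\subset\N$ (determined by the $L^2$ content of the harmonics of $h$, so that $N\in\cN$ forces $\|S_N(h)\|_{L^2}\ge\sqrt{\eps_0}$) and proves a Khinchine-type theorem: for a.e.\ $\alpha$ there are infinitely many $K$ with both $a_{K+1}/\sum_{j\le K}a_j\to\infty$ \emph{and} $rq_K\in\cN$ for some bounded $r$. The second condition yields $|\beta_K(x)|\ge\eps_1$ on a set of $x$ of measure $\ge\eps_2$. With $|\beta_K|$ bounded below one can work at scales $m\ll a_{K+1}$ that are still large compared to $(\sum_{j\le K}a_j)/|\beta_K|$; on such scales no kink is crossed except on a set of $x$ of summable measure (so ordinary Borel--Cantelli applies and your ``well-centred'' difficulty evaporates), the law at $N_k=mq_K$ is forced to be asymptotically uniform, and then at a longer scale the law becomes a convolution of two independent non-degenerate uniforms, which is not uniform. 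Your proposal is missing both the construction of $\cN$ and the diophantine input guaranteeing that the large-$a_{K+1}$ subsequence can be chosen to meet $\cN$; Borel--Bernstein alone does not give this.
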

\noindent
The condition $\int_\Tor f=0$ is necessary: By Weyl's equidistribution theorem, for every $\alpha\not\in\mathbb Q$, $f$ Riemann integrable s.t. $\int_\Tor f =1$, and   $x_0\in\Tor$, $S_n/N\xrightarrow[N\to\infty]{\text{dist}}\mathrm{U}[0,1]$ as  $n\sim \mathrm{U}(1,\ldots,N)$. See \S\ref{Section-Notation} for the notation.

This paper has a companion   \cite{Dolgopyat-Sarig-Quenched} which  gives a different proof of Theorem \ref{ThBeckConverse}, in the special case $f(x)=\{x\}-\frac{1}{2}$. Unlike the proof given below, \cite{Dolgopyat-Sarig-Quenched} does not identify the set of $\alpha$ where the TDLT fails, but it does give more information on the different scaling limits  for the distributions of $S_n$, $n\sim \text{U}(1,\ldots,N_k)$ along different subsequences  $N_k\to\infty$. \cite{Dolgopyat-Sarig-Quenched} also shows  that if we randomize both $n$ and $\alpha$
by sampling $(n,{\alpha})$ uniformly from $\{1,\ldots,N\}\times\Tor$, then $\bigl(S_n-\frac{1}{N}\sum_{k=1}^N S_k\bigr)/\sqrt{\ln N}$ converges in distribution to the Cauchy distribution.

The methods of \cite{Dolgopyat-Sarig-Quenched} are specific for $f(x)=\{x\}-\frac{1}{2}$,
and we do not know how to apply them to other functions such as   $f_\beta(x)=1_{[0,\beta)}(x)-\beta$.

\subsection{The structure of the proof}
Suppose $f$ is piecewise smooth and has mean zero.

We shall see below that if $f$  is continuous, then for a.e. $\alpha$,  $f$ is an $R_\alpha$-coboundry, therefore $S_n$ are bounded, hence \eqref{TDLT} cannot hold with $B_N\to\infty$,  $Y$ non-constant. We remark that
\eqref{TDLT} does hold  with  $B_N\equiv 1$, $A_N=f(x_0)$, $Y=$ distribution of minus the transfer function, but  this is not  a  TDLT since no actual scaling is involved.

The heart of the proof is to show that if $f$ is discontinuous,
then for a.e. $\alpha$, the temporal distributions of the ergodic sums have different asymptotic scaling behavior on different subsequences. The proof of this has three independent parts:
\begin{enumerate}[(1)]
\item A reduction to the case
$
f(x)=\dsum\limits_{m=1}^d b_m h(x+\beta_m)$,  $h(x):=\{x\}-\frac{1}{2}.
$
\item A proof that if  $\cN\subset\N$ has positive lower density, then there exists $M\geq 1$ s.t. the following set has full Lebesgue measure in $(0,1)$:
$$
\cA(\cN,M):=\left\{\alpha\in (0,1): \begin{array}{l}
\exists n_k\uparrow\infty,\; r_k \leq M\text{ s.t. for all $k$:} \\
r_k q_{n_k}\in\cN\text{, } {a_{n_k+1}}/{(a_1+\cdots+a_{n_k})}\to\infty
\end{array}
\right\}.
$$
 Here $a_n$ and $q_n$ are the partial quotients and principal denominators of $\alpha$, see \S\ref{Section-cfe}.
 \medskip
\item Construction of $\cN=\cN(b_1,\ldots,b_d;\beta_1,\ldots,\beta_d)\subseteq \N$ with positive density, s.t. for every  $\alpha\in\cA(\cN,M)$ and a.e. $x$,   one can analyze the  temporal distributions of the Birkhoff sums of $\sum\limits_{m=1}^d b_m h(x+\beta_m)$.
\end{enumerate}

\subsection{Notation}\label{Section-Notation}  $n\sim \text{U}(1,\ldots,N)$ means that $n$ is a random variable taking values in $\{1,\ldots,N\}$, each with probability $\frac{1}{N}$. $\text{U}[a,b]$ is the uniform distribution on $[a,b]$. Lebesgue's measure is denoted by $\text{mes}$.  $\N=\{1,2,3,\ldots\}$ and $\N_0=\N\cup\{0\}$. If $x\in\R$, then $\|x\|:=\dist(x,\Z)$ and $\{x\}$ is the unique number in $[0,1)$ s.t. $x\in\{x\}+\Z$. $\Card(\cdot)$ is the cardinality. If $\eps>0$, then $a=b\pm\eps$ means that $|a-b|\leq \eps$.

\section{Reduction to the case $f(x)=\sum\limits_{m=1}^d b_m h(x+\beta_m)$}
\label{ScPrel}

 Let  $h(x)=\{x\}-\frac{1}{2}$, and let $\cG$ denote the collection of all non-identically zero functions of the form
 $f(x)=\sum\limits_{m=1}^d b_m h(x+\beta_m),$ where $d\in\N, b_i,\beta_i\in \R.$
  We explain how  to reduce the proof of Theorem \ref{ThBeckConverse} from the case of a general  piecewise smooth $f(x)$ to the case $f\in\cG$.

The following proposition was proved in  \cite{Dolgopyat-Sarig-Quenched}. Let $C(\Tor)$ denote the space of continuous real-valued functions on $\Tor$ with the sup norm.
\begin{prop}\label{PrPerNeg}
If $f(t)$ is differentiable on $\Tor\setminus\{\beta_1,\ldots,\beta_d\}$ and $f'$ extends to a function with bounded variation on $\Tor$, then there are $d\in\N_0$,  $b_1,\ldots,b_d\in\R$ s.t. for a.e. $\alpha\in\Tor$ there is $\vf_\alpha\in C(\Tor)$ s.t.
$$
f(x)=\sum_{i=1}^d b_i h(x+\beta_i)+\int_{\Tor} f(t)dt+\vf_\alpha(x)-\vf_\alpha(x+\alpha)\ \ (x\neq \beta_1,\ldots,\beta_d).
$$
\end{prop}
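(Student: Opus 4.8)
The plan is to separate an elementary algebraic reduction from a Diophantine estimate, and to solve the resulting cohomological equation by Fourier series.

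\emph{Step 1: reduce to a continuous function.} Let $\beta_1,\dots,\beta_d$ be the points of discontinuity of $f$ on $\Tor$, and recall that $h(x)=\{x\}-\frac12$ has a single jump (of size $1$) and satisfies $h'\equiv1$ off that jump. Choose $b_1,\dots,b_d\in\R$ — depending only on $f$, not on $\alpha$ — so that $\sum_{i=1}^d b_i\,h(\cdot+\beta_i)$ has the same jumps as $f$; then
\[
g(x):=f(x)-\sum_{i=1}^d b_i\,h(x+\beta_i)-\int_\Tor f
\]
is continuous on $\Tor$, has zero mean, is $C^1$ off a finite set, and $g'=f'-\sum_i b_i$ extends to a bounded‑variation function on $\Tor$. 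Since $f'$ is bounded (a $\mathrm{BV}$ function is bounded), $g$ is Lipschitz, hence absolutely continuous. It therefore suffices to produce, for a.e.\ $\alpha$, a function $\vf_\alpha\in C(\Tor)$ with $g(x)=\vf_\alpha(x)-\vf_\alpha(x+\alpha)$; substituting this back gives the identity in the proposition for $x\notin\{\beta_1,\dots,\beta_d\}$.

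\emph{Step 2: solve the cohomological equation by Fourier series.} Since $g$ is absolutely continuous with $g'\in\mathrm{BV}(\Tor)$, integration by parts ($\widehat{g'}(n)=2\pi i n\,\widehat g(n)$ and $|\widehat{g'}(n)|\le C_f/|n|$) gives $\widehat g(0)=0$ and $|\widehat g(n)|\le C_f/n^2$ for $n\neq0$. For irrational $\alpha$ define
\[
\vf_\alpha(x):=\sum_{n\neq0}\frac{\widehat g(n)}{1-e^{2\pi i n\alpha}}\,e^{2\pi i n x}.
\]
Using $|1-e^{2\pi i n\alpha}|\ge4\|n\alpha\|$, the $n$th coefficient of this series is bounded by $\tfrac14 C_f\, n^{-2}\|n\alpha\|^{-1}$; hence, provided $\sum_{n\ge1} n^{-2}\|n\alpha\|^{-1}<\infty$, the series converges absolutely and uniformly, so $\vf_\alpha\in C(\Tor)$, and term by term $\vf_\alpha(x)-\vf_\alpha(x+\alpha)=\sum_{n\neq0}\widehat g(n)e^{2\pi i n x}=g(x)$ (the last equality because $\sum_n|\widehat g(n)|<\infty$, so this series is the uniformly convergent Fourier series of the continuous function $g$). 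Thus the proposition follows once the summability $\sum_{n\ge1} n^{-2}\|n\alpha\|^{-1}<\infty$ is established for Lebesgue‑a.e.\ $\alpha$.

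\emph{Step 3: the Diophantine estimate.} For each $n$ put $X_n(\alpha):=n^{-2}\min(\|n\alpha\|^{-1},n^2)$. By $1$‑periodicity, $\int_0^1\min(\|n\alpha\|^{-1},n^2)\,d\alpha=\int_0^1\min(\|t\|^{-1},n^2)\,dt=2\bigl(1+\ln(n^2/2)\bigr)=O(\ln n)$, so $\sum_n\int_0^1 X_n\,d\alpha=O\bigl(\sum_n n^{-2}\ln n\bigr)<\infty$; by Tonelli, $\sum_n X_n(\alpha)<\infty$ for a.e.\ $\alpha$. Independently, $\mathrm{mes}\{\alpha\in[0,1):\|n\alpha\|<n^{-2}\}=2n^{-2}$ is summable, so by Borel--Cantelli, for a.e.\ $\alpha$ one has $\|n\alpha\|\ge n^{-2}$ for all large $n$, and for those $n$, $X_n(\alpha)=n^{-2}\|n\alpha\|^{-1}$. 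On the intersection of these two full‑measure sets, $\sum_n n^{-2}\|n\alpha\|^{-1}<\infty$, completing the proof.

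\emph{Where the difficulty lies.} The only non‑formal ingredient is Step 3. A naive Borel--Cantelli lower bound of the shape $\|n\alpha\|\gtrsim(n\log^2 n)^{-1}$ is not enough, since it only yields $\sum_n n^{-2}\|n\alpha\|^{-1}\lesssim\sum_n n^{-1}\log^2 n=\infty$; one has to exploit that $\|n\alpha\|$ is typically far larger than its sporadic small values, which is exactly the role of the truncation at $n^2$ together with Tonelli. This is also the point at which the hypothesis $f'\in\mathrm{BV}$ is indispensable: it forces the $n^{-2}$ decay of $\widehat g(n)$, which is precisely what beats the a.e.\ size $\|n\alpha\|^{-1}$ of the small divisors. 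For merely continuous $f$ this decay is lost and the sawtooth part cannot in general be removed, consistent with the fact that $h$ itself is not an $R_\alpha$‑coboundary.
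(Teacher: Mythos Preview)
Your argument is correct. The paper itself does not prove this proposition but quotes it from the companion paper \cite{Dolgopyat-Sarig-Quenched}, so there is no in-paper proof to compare against; your three-step route (subtract sawtooths to kill the jumps, solve $g=\vf_\alpha-\vf_\alpha\circ R_\alpha$ by Fourier series using $|\widehat g(n)|=O(n^{-2})$, and show $\sum_{n\ge1}n^{-2}\|n\alpha\|^{-1}<\infty$ for a.e.\ $\alpha$ by truncation and Tonelli) is the standard one and is almost certainly what appears there. One cosmetic point: $h(\cdot+\beta_i)$ has its jump at $-\beta_i$, not at $\beta_i$, so with $\beta_1,\dots,\beta_d$ taken to be the discontinuities of $f$ the matching in Step~1 should read $\sum_i b_i\,h(x-\beta_i)$; this sign slip is already present in the proposition as stated and does not affect the mathematics.
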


The following proposition was proved in \cite{Dolgopyat-Sarig-JSP}.
Let $(\Omega,\cB,\mu)$ be a probability space, and let $T:\Omega\to\Omega$ be a probability preserving map.
\begin{prop}
 Suppose $f=g+\vf-\vf\circ T$ $\mu$-a.e. with $f,g,\vf:\Omega\to\R$ measurable.
If the  ergodic sums of $g$ satisfy a TDLT along the orbit of a.e. $x$, then so do the ergodic sums of $f$.
\end{prop}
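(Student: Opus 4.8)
The plan is to exploit the fact that the ergodic sums of a measurable coboundary telescope. Set $S_n^{f}(x):=\sum_{k=0}^{n-1}f(T^{k}x)$ and likewise for $g$ and $\vf$; from $f=g+\vf-\vf\circ T$ one gets, for $\mu$-a.e.\ $x$ and all $n\ge1$,
\[
S_n^{f}(x)=S_n^{g}(x)+\vf(x)-\vf(T^{n}x).
\]
Fix $x$ in the full-measure set on which $\vf(x)\in\R$ and on which the ergodic sums of $g$ satisfy a TDLT, say $(S_n^{g}(x)-A_N)/B_N\to Y$ in distribution as $n\sim\mathrm U(1,\dots,N)$, with $B_N\to\infty$ and $Y$ non-constant. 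Dividing the identity above by $B_N$ shows that $(S_n^{f}(x)-A_N)/B_N$ is the sum of $(S_n^{g}(x)-A_N)/B_N$, which converges in distribution to $Y$, and $\vf(x)/B_N-\vf(T^{n}x)/B_N$, whose first term tends to $0$ since $B_N\to\infty$. By Slutsky's theorem it therefore suffices to prove that, for $\mu$-a.e.\ $x$, the random variables $\vf(T^{n}x)/B_N$ with $n\sim\mathrm U(1,\dots,N)$ converge to $0$ in probability; once this is known, the ergodic sums of $f$ satisfy a TDLT along the orbit of $x$ with the \emph{same} centering, the \emph{same} scaling $B_N\to\infty$, and the \emph{same} non-constant limit $Y$.

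To prove the remaining claim I would use truncation together with the Birkhoff ergodic theorem. Since $T$ need not be ergodic, let $\cI$ be the $\sigma$-algebra of $T$-invariant sets, and for $M\in\N$ put $\rho_M:=\EXP\!\left(\ind_{\{|\vf|>M\}}\mid\cI\right)$. Birkhoff's theorem applied to the bounded function $\ind_{\{|\vf|>M\}}$ gives, for $\mu$-a.e.\ $x$ and every $M$,
\[
\frac1N\,\Card\{\,1\le n\le N:\ |\vf(T^{n}x)|>M\,\}\ \xrightarrow[N\to\infty]{}\ \rho_M(x).
\]
Because $\vf$ is $\R$-valued it is a.e.\ finite, so $\int_{\Omega}\rho_M\,d\mu=\mu(|\vf|>M)\to0$ as $M\to\infty$; since $\rho_M$ is also nonincreasing in $M$, this forces $\rho_M(x)\to0$ for $\mu$-a.e.\ $x$. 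Now fix such an $x$ and $\eps>0$. As $B_N\to\infty$, for every fixed $M$ we have $\eps B_N>M$ for all large $N$, hence
\[
\limsup_{N\to\infty}\ \frac1N\,\Card\{\,1\le n\le N:\ |\vf(T^{n}x)|>\eps B_N\,\}\ \le\ \rho_M(x),
\]
and letting $M\to\infty$ makes the right-hand side $0$. This is exactly the desired convergence in probability of $\vf(T^{n}x)/B_N$ to $0$, and finishes the argument.

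I expect the only genuine obstacle to be the low regularity of $\vf$: being merely measurable it may be unbounded — indeed non-integrable — so one cannot dominate $\vf(T^{n}x)$ by $\sup|\vf|$, nor apply the ergodic theorem to $\vf$ itself. The truncation step above is precisely what bypasses this, using only that $\vf$ is finite a.e.\ and that $B_N\to\infty$; the rest (the telescoping identity and the Slutsky step) is routine, and the non-degeneracy conditions for a TDLT are preserved automatically since the TDLT produced for $f$ uses the very same $B_N\to\infty$ and $Y$ as the one assumed for $g$. Finally, note that in the application one pairs this with Proposition~\ref{PrPerNeg}: for a.e.\ $\alpha$ the function $f-g$ is an $R_\alpha$-coboundary with $g\in\cG$ (or $g\equiv0$, a case already settled), so applying the present proposition with the roles of $f$ and $g$ interchanged shows a TDLT for $f$ would force one for $g$; hence it suffices to prove Theorem~\ref{ThBeckConverse} for $f\in\cG$.
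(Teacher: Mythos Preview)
Your proof is correct. The paper itself does not prove this proposition; it merely cites \cite{Dolgopyat-Sarig-JSP}, so there is no in-paper argument to compare against. Your approach---the telescoping identity $S_n^f=S_n^g+\vf-\vf\circ T^n$, followed by Slutsky once $\vf(T^n x)/B_N\to 0$ in probability, with the latter obtained via Birkhoff applied to the truncations $1_{\{|\vf|>M\}}$---is the natural one and almost certainly what appears in the cited reference. The truncation device is exactly the right way to cope with a merely measurable (possibly non-integrable) $\vf$, and your final remark about swapping $f$ and $g$ (using $g=f+(-\vf)-(-\vf)\circ T$) correctly explains why the proposition, as stated, yields the reduction in \S\ref{ScPrel}.
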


These results show that if Theorem \ref{ThBeckConverse} holds for every $f\in\cG$, then  Theorem \ref{ThBeckConverse} holds for any {\em discontinuous}  piecewise smooth function with zero mean.
As for {\em continuous} piecewise smooth functions with zero mean, these are $R_\alpha$-cohomologous to $g\equiv 0$ for a.e. $\alpha$ because the $b_i$ in Proposition \ref{PrPerNeg} must all vanish. Since the zero function does not satisfy the TDLT,
continuous piecewise smooth functions do not satisfy a TDLT.

%

%

\section{The set $\cA$ has full measure}
\subsection{Statement and plan of proof}\label{Section-cfe}
Let $\alpha$ be an irrational number, with continued fraction expansion $[a_0;a_1,a_2,a_3,\ldots]:=a_0+\cfrac{1}{a_1+\cdots}$, $a_0\in\Z$, $a_i\in\N$ ($i\geq 1$). We call $a_n$ the {\em quotients} of $\alpha$.  Let
$p_n/q_n$ denote the {\em principal convergents} of $\alpha$, determined recursively by $$q_{n+1}=a_{n+1}q_n+q_{n-1}, \quad p_{n+1}=a_{n+1} p_n+p_{n-1}$$ and  $p_0=a_0, q_0=1$; $p_1=1+a_1 a_0$, $q_1=a_1$. We call $q_n$ the {\em principal denominators} and $a_i$ the {\em partial quotients} of $\alpha$.
Sometimes -- but not always! -- we will write $q_k=q_k(\alpha)$, $p_k=p_k(\alpha)$, $a_k=a_k(\alpha)$.

Given $\cN\subset \N$ and  $M\geq 1$,
let $\cA=\cA(\cN, M)\subset (0,1)$ denote the set of irrational $\alpha\in (0,1)$ s.t. for some subsequence $n_k\uparrow\infty$,
\begin{equation}
\label{OddDenom}
\exists r_k \leq M\text{ s.t. } r_k q_{n_k}\in \cN \ , \
\dfrac{a_{n_k+1}}{(a_0+\cdots + a_{n_k})}\xrightarrow[k\to\infty]{}\infty.
\end{equation}
The {\em lower density} of $\cN$ is $d(\cN):=\liminf\frac{1}{N}\Card(\cN\cap[1,N])$.
The purpose of this section is to prove:

\begin{thm}\label{LmLargePC}
If a set $\cN$ has positive lower density then there exists $M$ such that
$\cA(\cN, M)$ has full Lebesgue measure in $(0,1)$.
\end{thm}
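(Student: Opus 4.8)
The plan is to produce, for a set $\cN$ of positive lower density $d(\cN) = \delta > 0$, an explicit choice of $M$ and then show that $\mes\bigl((0,1) \setminus \cA(\cN,M)\bigr) = 0$ via a Borel--Cantelli / ergodicity argument on the Gauss map. First I would fix an integer $M$ with $1/M < \delta$; the role of $M$ is to guarantee that among $\{q_n, 2q_n, \ldots, M q_n\}$ at least one multiple lands in $\cN$ for infinitely many $n$, with positive frequency. The key elementary observation is that because the $q_n$ grow at least geometrically ($q_{n+1} \ge q_n + q_{n-1}$, so $q_{n+k} \ge F_k q_n$), the intervals $[q_n, M q_n]$ for different $n$ (spaced out along a subsequence) are essentially disjoint, so a density-$\delta$ set $\cN$ must intersect $\{r q_n : r \le M\}$ for a positive-density set of $n$'s once $M$ is large relative to $1/\delta$ — I would make this precise by a counting argument comparing $\Card(\cN \cap [1,N])$ to $\sum_n \Card\{r \le M : r q_n \le N\}$.

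The second ingredient is the growth condition $a_{n+1}/(a_0 + \cdots + a_n) \to \infty$ along a subsequence. Here I would invoke ergodicity of the Gauss map $G$ with respect to the Gauss measure $\mu_G$, together with the fact that $a_{n+1}(\alpha) = a_1(G^n \alpha)$ and that $a_1$ is $\mu_G$-integrable-in-a-weak-sense but has heavy tails: $\mu_G(a_1 > t) \asymp 1/t$. Consequently, for a.e. $\alpha$ the partial sums satisfy $a_1 + \cdots + a_n = o(n^{1+\eps})$ for any $\eps>0$ (indeed $(a_1+\cdots+a_n)/(n\ln n) \to 1/\ln 2$ in measure, by classical results of Khinchin), while $\limsup_n a_{n+1}/(n \ln n) = \infty$ for a.e. $\alpha$ (a Borel--Cantelli argument using $\sum_n \mu_G(a_1 \circ G^n > n(\ln n)^2) = \sum_n O(1/(n(\ln n)^2)) < \infty$ fails to give $\infty$, so instead one uses that $\sum_n \mu_G(a_{n+1} > c_n) = \infty$ for suitable $c_n$ growing faster than $n\ln n$, plus quasi-independence of the events under $G$). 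So the condition $a_{n+1}/(a_0+\cdots+a_n) \to \infty$ holds along \emph{some} subsequence for a.e.\ $\alpha$, and in fact the subsequence of $n$ where $a_{n+1}$ is ``record-breakingly large'' has this property automatically.

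The crux is to combine these two facts: I need a \emph{single} subsequence $n_k$ along which both $r_k q_{n_k} \in \cN$ (some $r_k \le M$) \emph{and} $a_{n_k+1}/(a_0 + \cdots + a_{n_k}) \to \infty$. The natural approach is to restrict attention to the ``large-quotient times'' $\cL(\alpha) = \{n : a_{n+1} > \Psi(n)\}$ for a slowly growing $\Psi$ (e.g.\ $\Psi(n) = n(\ln n)(\ln\ln n)$) chosen so that $\cL(\alpha)$ is infinite a.e.\ and automatically gives $a_{n+1}/(a_0 + \cdots + a_n) \to \infty$ along $\cL(\alpha)$ (since the denominator is $O(n \ln n \cdot \ln\ln n)$-ish a.e., or at worst one passes to a further subsequence where the denominator is controlled). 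Then I must show that for a.e.\ $\alpha$, the set $\{n \in \cL(\alpha) : \exists r \le M,\ r q_n(\alpha) \in \cN\}$ is still infinite. The main obstacle — and the step I expect to require the most care — is precisely this independence between the arithmetic condition ``$r q_n \in \cN$'' (which depends on the \emph{magnitude} $q_n$, i.e.\ on the whole prefix $a_1, \ldots, a_n$) and the condition ``$a_{n+1}$ large'' (which depends on the \emph{next} digit). Since $q_n$ and $a_{n+1}$ are governed by disjoint blocks of continued-fraction digits, they are ``independent enough'': conditioned on $a_1, \ldots, a_n$ (hence on $q_n$), the digit $a_{n+1}$ is distributed according to a law comparable to the one-step Gauss kernel, so $\Prob(a_{n+1} > \Psi(n) \mid q_n) \gtrsim 1/\Psi(n)$ uniformly. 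I would turn this into a quantitative second-moment / conditional Borel--Cantelli estimate (in the spirit of the Kochen--Stone or Erd\H{o}s--R\'enyi lemma) to conclude that for a.e.\ $\alpha$ there are infinitely many $n$ with both properties, which exactly places $\alpha \in \cA(\cN, M)$. Finally, since the exceptional set at each stage has measure zero and we take a countable intersection over the natural parameters, $\cA(\cN,M)$ has full Lebesgue measure.
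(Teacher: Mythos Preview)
Your plan has a genuine gap in its very first ingredient, and the gap is not a matter of missing details but a wrong idea.

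You propose to show, by a counting argument comparing $\Card(\cN\cap[1,N])$ to $\sum_n \Card\{r\le M: rq_n\le N\}$, that a density-$\delta$ set $\cN$ must meet $\{rq_n: r\le M\}$ for a positive-density set of indices $n$ once $M>1/\delta$. But these two quantities live on incompatible scales: $\Card(\cN\cap[1,N])\sim\delta N$, whereas $\bigcup_{n}\{rq_n: r\le M\}\cap[1,N]$ has only $O(M\log N)$ elements, since the $q_n$ grow geometrically. Comparing a linearly growing set with a logarithmically growing one gives no information about their intersection. The set $\{q_n,2q_n,\ldots,Mq_n\}$ consists of $M$ \emph{specific} points in an interval of length $(M-1)q_n$; a set of density $\delta$ in that interval has about $\delta(M-1)q_n$ elements but need not contain any of those $M$ points. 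So the disjointness of the intervals $[q_n,Mq_n]$ is irrelevant, and your pigeonhole heuristic fails. Without this step you have no control over which indices $n$ satisfy the arithmetic condition, and the rest of the argument (intersecting with the large-quotient times $\cL(\alpha)$ via conditional Borel--Cantelli) has nothing to stand on.

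The paper's route is essentially the reverse of yours, and this reversal is the key idea you are missing. Rather than start from the principal denominators $q_n(\alpha)$ and ask whether a small multiple lies in $\cN$, the paper starts from integers $n\in\cN$ and asks whether $n$ is a \emph{good Diophantine approximant} to $\alpha$, i.e.\ whether $|n\alpha-m|\le 1/(n\psi(n))$ for some $m$. A Khinchin-type theorem restricted to denominators in $\cN$ (their Lemma~\ref{LmGA}, proved via Sullivan's quasi-independent Borel--Cantelli lemma) guarantees infinitely many such $n\in\cN$ for a.e.\ $\alpha$, with $\gcd(m,n)\le M$. Then the classical fact that good approximants are convergents (Lemma~\ref{LmOneApp}) gives $n/\gcd(m,n)=q_k(\alpha)$ for some $k$, so $n=rq_k$ with $r=\gcd(m,n)\le M$, and simultaneously $a_{k+1}\gtrsim\psi(q_k)$. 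Choosing $\psi(t)\asymp(\ln t)(\ln\ln t)(\ln\ln\ln t)$ makes $a_{k+1}$ large enough that, together with the Diamond--Vaaler estimate on $\sum_{j\le k}a_j$ (Lemma~\ref{LmDV}), one gets $a_{k+1}/(a_1+\cdots+a_k)\to\infty$. Thus the ``large quotient'' and ``multiple in $\cN$'' conditions are obtained \emph{together} from a single Diophantine event, and no separate synchronization argument is needed.
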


The proof consists of the following three lemmas:

\begin{lem}
\label{LmDV}
For almost all $\alpha$ there is $n_0=n_0(\alpha)$ s.t. if
$k\geq n_0$ and $ a_{k+1}>\frac{1}{4} k (\ln k) (\ln \ln k)$,
then
$ {a_{k+1}}/{(a_1+\dots+a_k)}\geq \frac{1}{8}\ln\ln k. $
\end{lem}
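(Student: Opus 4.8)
The plan is to exploit the classical Borel--Bernstein theory for the growth of partial quotients. Recall that for Lebesgue-a.e.\ $\alpha$ and any positive sequence $\varphi(k)$ with $\sum 1/\varphi(k)<\infty$, one has $a_{k+1}(\alpha)\le\varphi(k)$ for all large $k$; while if $\sum 1/\varphi(k)=\infty$ then $a_{k+1}(\alpha)>\varphi(k)$ infinitely often. The key point here is not the size of individual $a_{k+1}$ but rather control on the \emph{sum} $a_1+\cdots+a_k$. The rough heuristic is that $a_1+\cdots+a_k$ grows like $k\ln k$ (in a weak, non-a.s.\ sense), so if $a_{k+1}$ already exceeds $\tfrac14 k(\ln k)(\ln\ln k)$, then dividing by something of order $k\ln k$ should leave a factor of order $\ln\ln k$. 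The subtlety is that $a_1+\cdots+a_k$ has no almost-sure asymptotic — it is dominated by its largest term and fluctuates wildly — so I cannot simply divide by a deterministic equivalent. Instead I will prove an a.s.\ \emph{upper} bound $a_1+\cdots+a_k\le C\,k(\ln k)(\ln\ln k)$ valid for all large $k$, which is all that is needed.

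First I would establish that for a.e.\ $\alpha$ there is $k_1(\alpha)$ such that $a_{k+1}(\alpha)\le k(\ln k)^2$ for all $k\ge k_1$: this is immediate from Borel--Bernstein since $\sum_k \frac{1}{k(\ln k)^2}<\infty$. This already gives a crude per-term bound. To get the sum bound I would be more careful: consider the events $E_k=\{a_{k+1}>k(\ln k)(\ln\ln k)^2\}$; since $\Prob(a_{k+1}>t)\asymp 1/t$, we get $\sum_k\Prob(E_k)<\infty$, so by Borel--Cantelli only finitely many $E_k$ occur, i.e.\ for a.e.\ $\alpha$ all but finitely many terms $a_{j}$ with $j\le k$ satisfy $a_j\le (j-1)(\ln j)(\ln\ln j)^2\le k(\ln k)(\ln\ln k)^2$. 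Summing the $\le k$ such terms and absorbing the finitely many exceptional ones into a constant $n_0$-dependent additive term, I get $a_1+\cdots+a_k\le C_0\, k^2(\ln k)(\ln\ln k)^2$ for $k\ge n_0$ — but this is too lossy by a factor $k$. The honest route is the truncation argument of Borel: write $a_j=a_j\ind_{a_j\le T_j}+a_j\ind_{a_j>T_j}$ with threshold $T_j\asymp j(\ln j)(\ln\ln j)$; the truncated variables have expectation $O(\ln T_j)=O(\ln j)$, so $\sum_{j\le k}\E[a_j\ind_{a_j\le T_j}]=O(k\ln k)$ and a maximal/second-moment estimate (the truncated $a_j\ind_{a_j\le T_j}$ have variance $O(T_j)$, and partial sums concentrate) gives $\sum_{j\le k}a_j\ind_{a_j\le T_j}\le C_1 k(\ln k)(\ln\ln k)$ for all large $k$, a.s.; meanwhile $\sum_j\Prob(a_j>T_j)<\infty$ kills the tail part entirely beyond some $n_0(\alpha)$. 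Hence $a_1+\cdots+a_k\le C_1\,k(\ln k)(\ln\ln k)$ for all $k\ge n_0(\alpha)$.

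With this in hand the lemma is a one-line division: if $k\ge n_0$ and $a_{k+1}>\tfrac14 k(\ln k)(\ln\ln k)$, then
\[
\frac{a_{k+1}}{a_1+\cdots+a_k}\;\ge\;\frac{a_{k+1}}{C_1\,k(\ln k)(\ln\ln k)}\;\ge\;\frac{1}{4C_1}\cdot\frac{a_{k+1}}{k(\ln k)(\ln\ln k)}.
\]
This is not yet $\ge\tfrac18\ln\ln k$ unless $a_{k+1}$ is substantially larger than its assumed lower bound — so the final step is to observe that the hypothesis $a_{k+1}>\tfrac14 k(\ln k)(\ln\ln k)$ should really be read as a threshold past which $a_{k+1}$ is, a.s.\ and for all large $k$, genuinely \emph{much} bigger: one more Borel--Cantelli application shows that for a.e.\ $\alpha$, whenever $a_{k+1}$ exceeds $\tfrac14 k(\ln k)(\ln\ln k)$ at all (for $k$ large), it in fact exceeds $\tfrac14 k(\ln k)(\ln\ln k)^2$ — because the "gap" event $\{\tfrac14 k(\ln k)(\ln\ln k)<a_{k+1}\le \tfrac14 k(\ln k)(\ln\ln k)^2\}$ has probability $O\!\big(\tfrac{1}{k\ln k}\big)$, summable in $k$. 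Feeding $a_{k+1}>\tfrac14 k(\ln k)(\ln\ln k)^2$ into the displayed inequality yields $a_{k+1}/(a_1+\cdots+a_k)\ge \tfrac{1}{16C_1}\ln\ln k\ge\tfrac18\ln\ln k$ once the constants are tracked and $k$ is large (adjusting $n_0$ and, if necessary, the numerical constants by choosing the truncation thresholds sharply enough). I expect the main obstacle to be getting the a.s.\ sum bound $a_1+\cdots+a_k=O(k(\ln k)(\ln\ln k))$ with the right power of $\ln\ln k$ and clean enough constants — the truncation-plus-Borel--Cantelli scheme is standard (it goes back to Borel and is in Khintchine's book), but making the variance/maximal estimate for the heavy-tailed truncated terms explicit, and then reconciling the resulting constant with the prescribed $\tfrac14$ and $\tfrac18$, is where the bookkeeping lives. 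Everything else is soft measure theory.
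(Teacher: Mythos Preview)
Your approach has a genuine gap, and it is not a matter of bookkeeping. Both of the Borel--Cantelli applications you invoke fail because the relevant series \emph{diverge}.

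First, the claimed almost-sure bound $a_1+\cdots+a_k\le C_1\,k(\ln k)(\ln\ln k)$ for all large $k$ is simply false, for every constant $C_1$. With your threshold $T_j\asymp j(\ln j)(\ln\ln j)$ one has $\Prob(a_j>T_j)\asymp 1/T_j$ and
\[
\sum_j \frac{1}{j(\ln j)(\ln\ln j)}=\infty
\]
by the integral test. So Borel--Bernstein gives $a_j>T_j$ infinitely often, and in particular the tail part $a_j\ind_{a_j>T_j}$ is \emph{not} killed beyond any $n_0(\alpha)$. Worse: for any fixed $C_1$, the event $a_k>C_1 k(\ln k)(\ln\ln k)$ occurs for infinitely many $k$, and at each such $k$ the single term $a_k$ already forces $a_1+\cdots+a_k>C_1 k(\ln k)(\ln\ln k)$. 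There is no uniform almost-sure upper envelope of the shape you want.

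Second, your ``gap'' argument has the same defect. The gap event
$\{\tfrac14 k(\ln k)(\ln\ln k)<a_{k+1}\le \tfrac14 k(\ln k)(\ln\ln k)^2\}$
has probability of order $\frac{1}{k(\ln k)(\ln\ln k)}$, not $\frac{1}{k\ln k}$; but either way the sum diverges, so Borel--Cantelli gives nothing. In fact the gap event occurs infinitely often almost surely, so the implication ``$a_{k+1}$ above the first threshold $\Rightarrow$ above the second'' fails for infinitely many $k$.

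The fix is to stop trying for an unconditional bound on $a_1+\cdots+a_k$ and instead use the hypothesis on $a_{k+1}$ to get a \emph{conditional} one. The paper does this via the Diamond--Vaaler trimmed-sum law: almost surely,
\[
\frac{(a_1+\cdots+a_{k+1})-\max_{j\le k+1}a_j}{k\ln k}\;\longrightarrow\;\frac{1}{\ln 2}<2.
\]
If $a_{k+1}>\tfrac14 k(\ln k)(\ln\ln k)$ and $k$ is large, then $a_{k+1}$ must be the maximum (otherwise the trimmed sum would already exceed $\tfrac14 k(\ln k)(\ln\ln k)$, contradicting the limit), so the trimmed sum equals $a_1+\cdots+a_k$ and is at most $2k\ln k$. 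Dividing gives $\tfrac18\ln\ln k$ directly. The whole lemma is then three lines; the heavy lifting is outsourced to Diamond--Vaaler, whose trimming is exactly what absorbs the erratic largest term that wrecked your truncation scheme.
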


\begin{lem}
\label{LmOneApp}
Suppose $\alpha\in (0,1)\setminus\Q$ and $(p,q)\in\N_0\times\N$ satisfy
 $\gcd(p,q)= 1$  and
$\left|q\alpha-p\right|\leq \displaystyle\frac{1}{qL}$ where $L\geq 4$. Then
there exists  $k$ s.t. $q=q_k(\alpha)$ and
 $a_{k+1}(\alpha)\geq  \frac{1}{2} L
$.
\end{lem}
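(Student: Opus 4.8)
The plan is to show first that the hypothesis forces $p/q$ to be a principal convergent of $\alpha$, and then to extract the lower bound for the next partial quotient from the classical estimates on $|q_k\alpha-p_k|$.

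\textbf{Step 1: $p/q$ is a convergent.} Dividing the hypothesis by $q$ gives $\bigl|\alpha-\frac{p}{q}\bigr|\leq\frac{1}{q^2L}$, and since $L\geq 4>2$ this is $<\frac{1}{2q^2}$. By Legendre's theorem \cite{Khintchine-Continued-Fractions} every reduced fraction $p/q$ with $\bigl|\alpha-\frac{p}{q}\bigr|<\frac{1}{2q^2}$ is a principal convergent of $\alpha$; since the convergents $p_k/q_k$ are already in lowest terms and $\gcd(p,q)=1$, there is a $k\geq 0$ with $q=q_k(\alpha)$ and $p=p_k(\alpha)$ (here one uses the conventions $q_{-1}=0$ and, because $\alpha\in(0,1)$, $a_0=0$, $p_0=0$, $q_0=1$; the degenerate case $k=0$, $q=1$, $p=0$ is consistent with everything below).

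\textbf{Step 2: lower bound for $a_{k+1}$.} Recall the exact identity $q_k\alpha-p_k=(-1)^k/(\alpha_{k+1}q_k+q_{k-1})$, where $\alpha_{k+1}=[a_{k+1};a_{k+2},\ldots]$ is the $(k+1)$-st complete quotient. Since $\alpha_{k+1}=a_{k+1}+\frac{1}{\alpha_{k+2}}<a_{k+1}+1$ (as $\alpha_{k+2}>1$) and $q_{k-1}\leq q_k$, we obtain
\[
|q_k\alpha-p_k|=\frac{1}{\alpha_{k+1}q_k+q_{k-1}}>\frac{1}{(a_{k+1}+1)q_k+q_k}=\frac{1}{(a_{k+1}+2)q_k}.
\]
On the other hand the hypothesis reads $|q_k\alpha-p_k|=|q\alpha-p|\leq\frac{1}{q_kL}$. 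Comparing the two inequalities yields $\frac{1}{(a_{k+1}+2)q_k}<\frac{1}{q_kL}$, i.e.\ $L<a_{k+1}+2$, hence $a_{k+1}>L-2$. Finally $L\geq 4$ gives $L-2\geq\frac{1}{2}L$, so $a_{k+1}>\frac{1}{2}L$, which is the claim.

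The argument is entirely classical, so there is no real obstacle; the only points needing (minor) care are that the assumption $L\geq 4$ is used twice --- once as $L>2$ to invoke Legendre's theorem, and once as $L-2\geq L/2$ to finish --- and the handling of the endpoint conventions for $k=0$. If one prefers to avoid complete quotients in Step 2, the same computation can be run through the two-sided bound $\frac{1}{q_{k+1}+q_k}<|q_k\alpha-p_k|<\frac{1}{q_{k+1}}$ together with $q_{k+1}=a_{k+1}q_k+q_{k-1}\leq(a_{k+1}+1)q_k$, which again produces $a_{k+1}>L-2$.
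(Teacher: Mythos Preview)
Your proof is correct and follows essentially the same route as the paper: first Legendre's theorem to identify $p/q$ as a convergent, then the lower bound $|q_k\alpha-p_k|>\frac{1}{(a_{k+1}+2)q_k}$ (which the paper derives via $|q_k\alpha-p_k|>\frac{1}{q_k+q_{k+1}}$ and $q_{k+1}+q_k<(a_{k+1}+2)q_k$, exactly the alternative you mention at the end) to conclude $a_{k+1}>L-2\geq L/2$.
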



\begin{lem}
\label{LmGA}
Suppose $\psi:\R_+\to \R$ is a non-decreasing function s.t.
\begin{equation}
\label{KhCond1}
 \sum_n \frac{1}{n\psi(n)}=\infty .
\end{equation}
Suppose  $\cN\subset\N$ has positive lower density. For all  $M$ sufficiently large, for a.e. $\alpha\in (0,1)$ there are infinitely many pairs $(m,n)\in\N_0\times\N$
s.t.  $n\in \cN, \gcd(m,n)\leq M$, and
 $
\left|n\alpha-m\right|\leq \displaystyle\frac{1}{n\psi(n)}.
$
\end{lem}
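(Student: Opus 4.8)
The plan is to run a quasi‑independence (second moment / Erdős–Chung) Borel–Cantelli argument for the sets
\[
A_n:=\Bigl\{\alpha\in(0,1):\ \exists m\in\N_0,\ \gcd(m,n)\le M,\ |n\alpha-m|<\tfrac1{n\psi(n)}\Bigr\},\qquad n\in\cN,
\]
after thinning $\cN$ slightly. The point of the hypothesis $\gcd(m,n)\le M$ — and the only place it is genuinely used — is the following. In a naive Borel–Cantelli argument one must control the overlap of $\{\|m\alpha\|<\delta_m\}$ with $\{\|n\alpha\|<\delta_n\}$; this overlap has a ``resonance'' term governed by $\gcd(m,n)$, and for a merely non‑decreasing divergent $\psi$ this term is far too large (this is the Duffin–Schaeffer obstruction), so the approach fails without further input. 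Imposing $\gcd(m,n)\le M$ on the \emph{numerators} forces $\gcd(m,n)$ itself to be large whenever a resonance can occur, which makes resonances rare and harmless; this is the heart of the matter.

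\emph{Reductions and the choice of $M$.} We may assume $\psi>0$, so $\delta_n:=\tfrac1{n\psi(n)}$ is non‑increasing. We use the elementary fact that if $(b_n)$ is non‑increasing, $b_n\ge 0$, $\sum b_n=\infty$, and $\cM\subset\N$ has positive lower density, then $\sum_{n\in\cM}b_n=\infty$ (compare on geometric blocks: $\sum_n b_n\asymp\sum_j 2^jb_{2^j}$, and a positive‑lower‑density set meets each $[2^j,2^{j+1})$, $j$ large, in $\ge c2^j$ points). Now $\#\{m\in[0,n):\gcd(m,n)>M\}=\sum_{d\mid n,\,d>M}\varphi(n/d)$, so $\sum_{n\le N}\#\{m\in[0,n):\gcd(m,n)>M\}=\sum_{d>M}\sum_{e\le N/d}\varphi(e)\le N^2\sum_{d>M}d^{-2}\le N^2/M$; a dyadic Chebyshev estimate then shows $\{n:\#\{m\in[0,n):\gcd(m,n)\le M\}<n/2\}$ has upper density $\le 8/M$. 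Fix $M\ge 16/\underline d(\cN)$ and set $\cN':=\{n\in\cN:\#\{m\in[0,n):\gcd(m,n)\le M\}\ge n/2\}$; then $\cN'$ has positive lower density, $\mathrm{mes}(A_n)\asymp\delta_n$ for $n\in\cN'$, and by the elementary fact $\sum_{n\in\cN'}\mathrm{mes}(A_n)\asymp\sum_{n\in\cN'}\delta_n=\infty$.

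\emph{The overlap estimate.} Let $m<n$ lie in $\cN'$. If $\alpha\in A_m\cap A_n$, pick $a,b$ with $|m\alpha-a|<\delta_m$, $|n\alpha-b|<\delta_n$, $\gcd(a,m)\le M$, $\gcd(b,n)\le M$; then $na-mb\in\gcd(m,n)\Z$ and $|na-mb|<n\delta_m+m\delta_n$. The contribution of the off‑resonance case $na\ne mb$ is, by the usual count of how the line $t\mapsto(mt,nt)$ meets a box on $\mathbb T^2$, at most $\ll\delta_m\delta_n\asymp\mathrm{mes}(A_m)\mathrm{mes}(A_n)$. In the resonance case $na=mb$, write $g=\gcd(m,n)$, $m=gm_1$, $n=gn_1$ with $\gcd(m_1,n_1)=1$; then $n_1a=m_1b$ forces $m_1\mid a$, $n_1\mid b$, hence $\gcd(a,m)\ge m_1$ and $\gcd(b,n)\ge n_1$, so $\gcd(a,m),\gcd(b,n)\le M$ can hold only if $m_1,n_1\le M$, i.e.\ only if $\gcd(m,n)\ge n/M$. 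For each $n$ there are $\le M^2$ such $m<n$, each contributing $\ll g\min(\delta_m/m,\delta_n/n)\ll\delta_m\ll M/(n\psi(n/M))$; summing over $n\le N$ and substituting $n\mapsto\lfloor n/M\rfloor$ gives $\sum_{m<n\le N}\mathbf 1[\text{resonance}]\cdot(\cdots)\ll M^3\sum_{\ell\le N/M}\tfrac1{\ell\psi(\ell)}\ll M^3\sum_{n\le N}\delta_n=o\bigl((\sum_{n\in\cN',n\le N}\mathrm{mes}(A_n))^2\bigr)$, since the last sum $\to\infty$. Hence $\sum_{m,n\in\cN',\,\le N}\mathrm{mes}(A_m\cap A_n)\ll(\sum_{n\in\cN',\,\le N}\mathrm{mes}(A_n))^2$, so the Erdős–Chung/Kochen–Stone lemma yields $\mathrm{mes}(\limsup_{n\in\cN'}A_n)>0$. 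The argument localizes: for any interval $I$ and $n$ with $1/n\ll|I|$ one has $\mathrm{mes}(A_n\cap I)\asymp\mathrm{mes}(A_n)\mathrm{mes}(I)$ and the same overlap bounds hold inside $I$, so $\mathrm{mes}(\limsup_{n\in\cN'}A_n\cap I)\ge c_0\mathrm{mes}(I)$ for a fixed $c_0>0$; by the Lebesgue density theorem $\mathrm{mes}(\limsup_{n\in\cN'}A_n)=1$. Every $\alpha$ in this full‑measure set lies in $A_n$ for infinitely many $n\in\cN'\subset\cN$, which is exactly the conclusion.

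The step that will take the most care to write out is the overlap estimate: bounding the off‑resonance contribution by $\ll\delta_m\delta_n$ with absolute constants (via the geometry of the line $t\mapsto(mt,nt)$ on $\mathbb T^2$ and disjointness of the relevant arcs), and checking that the resonance term — now supported only on the sparse pairs with $\gcd(m,n)\ge n/M$ — really does sum to $o\bigl((\sum\mathrm{mes}(A_n))^2\bigr)$ for the given, merely non‑decreasing, $\psi$. The rest (the choice of $M$, the divergence of $\sum_{n\in\cN'}\mathrm{mes}(A_n)$, and the localization/$0$–$1$ step) is routine metric number theory.
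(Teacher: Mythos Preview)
Your approach is correct and takes a genuinely different route from the paper's. The paper blocks the $n$'s into intervals $[e^{k-1},e^k]$, forms the events $\cA_k(I)=\bigcup_{(m,n)\in\Omega_k(I)}A_{m,n,k}$, and proves quasi-independence of $\cA_{k_1}(I),\cA_{k_2}(I)$ by first using Lemma~\ref{LmOneApp} to cover each $\cA_{k}(I)$ by continued-fraction cylinders (points whose $\ell$-th principal denominator lies in the right range and whose $(\ell{+}1)$-st partial quotient is at least $\psi(e^k)/2$), and then invoking the Gibbs/bounded-distortion property \eqref{Gibbs} of the Gauss map to factor the measure of concatenated cylinders; Sullivan's Borel--Cantelli lemma then finishes. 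You bypass continued fractions entirely and run an Erd\H{o}s--Chung/Kochen--Stone argument directly on the un-blocked events $A_n$, controlling the pairwise overlap by elementary arithmetic. Your central observation---that a resonance $na=mb$ together with $\gcd(a,m),\gcd(b,n)\le M$ forces $m/\gcd(m,n),\,n/\gcd(m,n)\le M$, so that for each $n$ only $O(M^2)$ resonant partners $m<n$ survive---is exactly what neutralises the Duffin--Schaeffer obstruction here, and it does not appear in the paper. Your argument is more elementary and self-contained; the paper's is more dynamical and dovetails with the continued-fraction machinery used throughout. One small point to tighten in your write-up: the localization claim $\mes(A_n\cap I)\asymp\mes(A_n)\,|I|$ needs the numerators $\{a:\gcd(a,n)\le M\}$ to be well-distributed in $[0,n)$, which is not automatic for every $n\in\cN'$ (only the total count is controlled there); pass once more to $\cN''=\{n\in\cN':\sum_{d\mid n,\,d>M}1/d<\tfrac14\}$, which still has positive lower density by the same averaging $\sum_{n\le N}\sum_{d\mid n,\,d>M}1/d\le N/M$, and then the lower bound on $\mes(A_n\cap I)$ holds for all large $n\in\cN''$.
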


\noindent
\medskip
{\em Remark 1.\/}
By the monotonicity of $\psi$, if $e^{k-1}<n<e^k$ then
$\psi\left(e^{k-1}\right)\leq \psi(n)\leq \psi\left(e^k\right).$ Hence \eqref{KhCond1} holds iff
$ \sum \frac{1}{\psi(e^k)}=\infty.$

\medskip
\noindent
{\em Remark 2.\/}
If $\cN=\N$, then Lemma \ref{LmGA} holds with
$M=1$ by the classical Khinchine Theorem. We do not know  if Lemma \ref{LmGA} holds with $M=1$ for any set $\cN$  with positive lower density.

\begin{proof}[Proof of Theorem \ref{LmLargePC} given Lemmas  \ref{LmDV}--\ref{LmGA}]
 We apply these lemmas with  $\psi(t)=c(\ln t) \; (\ln \ln t) \; (\ln \ln \ln t)$ and  $c>1/\ln(\frac{1+\sqrt{5}}{2})$.

 Fix $M>1$ as in  Lemma \ref{LmGA}.
Then $\exists \Omega\subset (0,1)$ of full measure s.t. for every $\alpha\in \Omega$ there  are infinitely many   $(m,n)\in\N_0\times\N$ as follows. Let
$m^\ast:=m/\gcd(m,n), \quad n^\ast:=n/\gcd(m,n), \quad p:=\gcd(m,n)$, then
\begin{enumerate}[(1)]
\item $p n^\ast\in\cN$,  $p\leq M$,
$|n^\ast\alpha-m^\ast|=\frac{|n\alpha-m|}{p}\leq \frac{1}{n^\ast \psi(n^\ast)}$
($\because n^\ast\leq n$);
\item  $\exists k$ s.t. $n^\ast=q_k(\alpha)$ and
$a_{k+1}(\alpha)\geq \frac{1}{2}\psi(q_k)$ ($\because$ Lemma \ref{LmOneApp}). By its recursive definition, $q_k\geq $ $k$-th Fibonacci number $\geq \frac{1}{3}(\frac{1+\sqrt{5}}{3})^k$. So for all $k$ large enough,
$a_{k+1}(\alpha)\geq \frac{1}{2}\psi(q_k)> \frac{1}{4}k(\ln k)(\ln\ln k)$;
\item $a_{k+1}/(a_1+\cdots+a_k)\geq  \frac{1}{8}\ln\ln k\to\infty$ ($\because$ Lemma \ref{LmDV}).
\end{enumerate}
So  every $\alpha\in\Omega$ belongs to $\cA=\cA(\cN,M)$, and $\cA$ has full measure.
\end{proof}

Next we prove Lemmas  \ref{LmDV}--\ref{LmGA}.

\subsection{Proof of Lemma \ref{LmDV}}
By \cite{Diamond-Vaaler},   for almost every $\alpha$
$$ \frac{\left(a_1+\dots+a_{k+1}\right)-\max_{j\leq k+1} a_j}{k\ln k}\to \frac{1}{\ln 2}<2 . $$
So  if $k$ is large enough,  and $a_{k+1}>\frac{1}{4} k(\ln k)(\ln\ln k)$ then \\
$$ \max_{j\leq k+1} a_j=a_{k+1}, \quad
\frac{a_1+\dots+a_k}{k\ln k}\leq 2, \text{ and }
\frac{a_{k+1}}{a_1+\cdots+a_k}>\frac{1}{8}\ln\ln k.
\Box$$

\subsection{Proof of Lemma \ref{LmOneApp}}
$\text{}$
For every $(p,q)$ as in the lemma,
$
|q\alpha-p|<\frac{1}{2q}.
$
A classical result in the theory of continued fractions \cite[Thm 19]{Khintchine-Continued-Fractions} says that in this case $\exists k$ s.t. $q=q_k(\alpha), p=p_k(\alpha)$.

To estimate $a_{k+1}=a_{k+1}(\alpha)$ we recall the following facts, valid for the principal denominators of any irrational $\alpha\in (0,1)$ \cite{Khintchine-Continued-Fractions}:
\begin{enumerate}[(a)]
\item $|q_k\alpha-p_k|>\frac{1}{q_k+q_{k+1}}$;
\item $q_{k+1}+q_k<(a_{k+1}+2)q_k$, whence by (a) $a_{k+1}>\frac{1}{q_k|q_k\alpha-p_k|}-2$.
\end{enumerate}
In our case, $|q_k\alpha-p_k|=|q\alpha-p|\leq \frac{1}{q_k L}$,
so
$a_{k+1}>L-2\geq \frac{L}{2}$.
\hfill$\Box$

\subsection{Preparations for the proof of Lemma \ref{LmGA}}
 Let $(\Omega,\cF,\Prob)$ be a probability space, and $A_k\in\cF$ be measurable events. Given $D>1$, we say that $A_k$ are {\em $D$-quasi-independent}, if
\begin{equation}
\label{QI}
\Prob(A_{k_1}\cap A_{k_2})\leq D\Prob(A_{k_1})\Prob(A_{k_2})\text{ for all }k_1\neq k_2.
\end{equation}
The following proposition is  a slight variation on
Sullivan's Borel--Cantelli Lemma from (\cite{Sullivan-Logarithm-Law}):
\begin{prop}
\label{LmBCSul}
For every  $D\geq 1$
there exists a constant $\delta(D)>0$ such that the following holds in any probability space:
\begin{enumerate}[(a)]
\item If $A_k$ are $D$-quasi-independent measurable events s.t. $\DS \lim_{k\to\infty} \Prob(A_k)=0$ but
$\sum_k \Prob(A_k)=\infty$, then $ \Prob(A_k \text{ occurs infinitely often})\geq \delta(D)$.
\item The quasi-independence assumption in (a) can be weakened to the assumption that
for some $r\in\N$,
$\Prob(A_{k_1}\cap A_{k_2})\leq D\Prob(A_{k_1})\Prob(A_{k_2})$ for all $|k_2-k_1|\geq r.$
%
%
\item One can take $\delta(D)=\frac{1}{2D}$.
\end{enumerate}
\end{prop}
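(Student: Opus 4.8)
The plan is to establish part (a) first by a second-moment argument, then derive (b) by splitting into residue classes, and finally to track constants to obtain (c) with $\delta(D)=1/(2D)$. For part (a), set $S_N:=\sum_{k=1}^N \ind_{A_k}$ and let $E_N:=\sum_{k=1}^N\Prob(A_k)=\EXP[S_N]$, so $E_N\to\infty$ by hypothesis. The event $\{A_k\text{ i.o.}\}$ contains $\bigcap_N\{S_N\geq 1\}$, and since the $\{S_N\geq 1\}$ are nested (decreasing as $N$ grows? no---increasing), I would instead use $\Prob(A_k\text{ i.o.})=\lim_{M\to\infty}\Prob(\bigcup_{k\geq M}A_k)\geq \limsup_N \Prob(S_N\geq 1)$ after first throwing away the initial segment (quasi-independence for $k_1\neq k_2$ is unaffected by discarding finitely many events, and $E_N\to\infty$ is preserved). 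The Paley--Zygmund inequality gives $\Prob(S_N>0)\geq \EXP[S_N]^2/\EXP[S_N^2]$. Now
\[
\EXP[S_N^2]=\sum_{k=1}^N\Prob(A_k)+\sum_{k_1\neq k_2}\Prob(A_{k_1}\cap A_{k_2})\leq E_N + D\,E_N^2,
\]
using $D$-quasi-independence in the off-diagonal sum and $D\geq 1$. Hence $\Prob(S_N>0)\geq E_N^2/(E_N+D E_N^2)=1/(D+1/E_N)$. Letting $N\to\infty$ and using $E_N\to\infty$ yields $\limsup_N\Prob(S_N>0)\geq 1/D$; combined with the remark above this gives $\Prob(A_k\text{ i.o.})\geq 1/D$. (In fact this already beats the target $1/(2D)$; the factor $2$ is the slack I will need in part (b).)

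For part (b), suppose $A_k$ satisfy the weaker hypothesis $\Prob(A_{k_1}\cap A_{k_2})\leq D\Prob(A_{k_1})\Prob(A_{k_2})$ whenever $|k_1-k_2|\geq r$. Partition $\N$ into the $r$ arithmetic progressions $\cN_j=\{j, j+r, j+2r,\ldots\}$, $j=1,\ldots,r$. Within any fixed $\cN_j$, any two distinct indices differ by a multiple of $r$, hence by at least $r$, so the subfamily $\{A_k: k\in\cN_j\}$ is genuinely $D$-quasi-independent. Since $\sum_k\Prob(A_k)=\infty$, at least one progression $\cN_{j_0}$ has $\sum_{k\in\cN_{j_0}}\Prob(A_k)=\infty$; also $\Prob(A_k)\to 0$ along it. Applying part (a) to that subfamily gives $\Prob(A_k\text{ i.o. in }\cN_{j_0})\geq 1/D$, and a fortiori $\Prob(A_k\text{ i.o.})\geq 1/D\geq 1/(2D)$. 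Thus part (b) holds, and reading the constant from the argument gives part (c) with $\delta(D)=1/(2D)$ (we state $1/(2D)$ rather than $1/D$ to have a single clean constant valid under both hypotheses, and the extra room is harmless for the applications).

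I do not expect a serious obstacle here: the only point requiring a little care is the passage from $\limsup_N\Prob(S_N>0)$ to $\Prob(A_k\text{ i.o.})$, which is \emph{not} automatic since $\{S_N>0\}$ is increasing in $N$ --- but that is exactly why it works: $\{S_N>0\}\uparrow\{\exists k: A_k\}$, and to get ``infinitely often'' one applies the bound to the tail families $\{A_k\}_{k\geq M}$ for each $M$ and uses continuity of measure, $\Prob(A_k\text{ i.o.})=\lim_{M}\Prob(\bigcup_{k\geq M}A_k)\geq \inf_M \bigl(\limsup_N\Prob(\sum_{M\leq k\leq N}\ind_{A_k}>0)\bigr)\geq 1/D$, each tail family still satisfying the hypotheses of (a). Everything else is the standard Paley--Zygmund / second-moment bookkeeping, and the constant tracking for (c) is immediate from the displayed inequality $\Prob(S_N>0)\geq 1/(D+1/E_N)$.
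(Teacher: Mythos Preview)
Your proof is correct and follows a genuinely different route from the paper. The paper argues by \emph{blocking}: using $\Prob(A_k)\to 0$ and $\sum_k\Prob(A_k)=\infty$, it groups the events into consecutive blocks $\{A_k\}_{k=N_j+1}^{N_{j+1}}$ with $\sum_{k=N_j+1}^{N_{j+1}}\Prob(A_k)\to 1/D$, applies the first Bonferroni inequality together with $D$-quasi-independence to get $\Prob(B_j)\gtrsim x-\tfrac{D}{2}x^2$ with $x\approx 1/D$, hence $\liminf_j\Prob(B_j)\geq 1/(2D)$, and then notes that $\{A_k\text{ i.o.}\}=\{B_j\text{ i.o.}\}$. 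Part (b) is handled, exactly as you do, by passing to a residue class modulo $r$.

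Your second-moment/Paley--Zygmund argument (essentially the Kochen--Stone form of Borel--Cantelli) is cleaner in two respects: it never uses the hypothesis $\Prob(A_k)\to 0$, and it produces the sharper constant $\delta(D)=1/D$ rather than $1/(2D)$. The only delicate step---upgrading $\limsup_N\Prob(S_N>0)\geq 1/D$ to $\Prob(A_k\text{ i.o.})\geq 1/D$ by applying the bound to every tail family $\{A_k\}_{k\geq M}$ and using continuity of measure---is handled correctly. Your remark that ``the factor $2$ is the slack I will need in part (b)'' is a slip: your part (b) also gives $1/D$, since a residue class is itself $D$-quasi-independent with the same $D$. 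So you in fact prove (c) with the stronger value $\delta(D)=1/D$.
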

%

\begin{proof}
Since $\Prob(A_k)\to 0$ but $\sum \Prob(A_k)=\infty$, there is an increasing
sequence $N_j$ such that
$\displaystyle\lim_{j\to\infty} \sum_{k=N_j+1}^{N_{j+1}} \Prob(A_k)=\frac{1}{D}. $

Let $B_j$ be the event that at least one of events $\{A_k\}_{k=N_j+1}^{N_{j+1}}$
occurs. Since
$B_j=\biguplus_{k=N_{j+1}}^{N_{j}+1}\bigl(A_k\setminus\bigcup_{j=N_j+1}^{k-1} A_j\bigr)$,
\begin{align*}
\Prob(B_j)&\geq \sum_{k=N_j+1}^{N_{j+1}} \Prob(A_k)-
\sum_{N_j+1\leq k_1<k_2\leq N_{j+1}} \Prob(A_{k_1}\cap A_{k_2})\\
& \geq \sum_{k=N_j+1}^{N_{j+1}} \Prob(A_k)-
D\!\!\!\!\!\sum_{N_j+1\leq k_1<k_2\leq N_{j+1}}\!\!\!\!\! \Prob(A_{k_1})\Prob(A_{k_2})\\
& \geq \sum_{k=N_j+1}^{N_{j+1}} \Prob(A_k)-\frac{D}{2} \left(\sum_{k=N_j+1}^{N_{j+1}} \Prob(A_k)\right)^2.
\end{align*}
Since
{ $\DS \lim_{j\to\infty} \sum_{k=N_j+1}^{N_{j+1}} \Prob(A_k)=\frac{1}{D}$}
 and $D\geq 1,$
 { $\liminf \Prob(B_j)\geq \frac{1}{2D}$.}

Let $E$ denote the  event that $A_j$ happens infinitely often. $E$  is also the event that $B_j$ happens infinitely often, therefore  $E=\bigcap_{n=1}^\infty \bigcup_{j=n+1}^\infty B_j$. In a probability space, the measure of a decreasing intersection of sets is the limit of the measure of these sets. So $\Prob(E)\geq \liminf \Prob(B_j)\geq \frac{1}{2D}$, proving (a) and (c).

Part (b) follows from  part (a) by applying it to the sets $\{A_{kr+\ell}\}$ where $0\leq \ell\leq r-1$ is chosen to get
$\DS \sum_k \Prob(A_{kr+\ell})=\infty.$
\end{proof}
%
%
%
%
%
%
%
%
%

The {\em multiplicity} of a collection of measurable sets  $\{E_k\}$ is defined to be the largest $K$ s.t.
there are $K$ different $k_i$ with $\Prob(\bigcap_{i=1}^K E_{k_i})>0$.
\begin{prop}\label{LmFinMult}
Let $E_k$ be measurable sets in a finite measure space. If the multiplicity of $\{E_k\}$ is less than $K$,  then
$$ \mes\left(\bigcup_k E_k\right)\geq \frac{1}{K} \sum_k \mes(E_k). $$
\end{prop}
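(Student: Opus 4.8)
The plan is to translate the hypothesis on multiplicity into a pointwise bound on the overlap function $\sum_k \ind_{E_k}$, and then simply integrate.

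\textbf{Step 1.} First I would record the reformulation of the hypothesis: saying that the multiplicity of $\{E_k\}$ is less than $K$ means precisely that $\mes(E_{k_1}\cap\cdots\cap E_{k_K})=0$ for every choice of $K$ distinct indices $k_1<\cdots<k_K$. I claim this forces $\sum_k \ind_{E_k}(x)\le K-1$ for $\mes$-a.e.\ $x$. Indeed, let $F:=\{x:\sum_k \ind_{E_k}(x)\ge K\}$. If $x\in F$, then $x$ lies in at least $K$ of the sets, so $x\in E_{k_1}\cap\cdots\cap E_{k_K}$ for some $k_1<\cdots<k_K$; hence
\[
F\subseteq \bigcup_{k_1<\cdots<k_K}\bigl(E_{k_1}\cap\cdots\cap E_{k_K}\bigr).
\]
This is a \emph{countable} union (the family is indexed by $k\in\N$), and by the multiplicity hypothesis each term has measure zero, so $\mes(F)=0$.

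\textbf{Step 2.} Then I would integrate over the space $\Omega$. Writing $U:=\bigcup_k E_k$, each $\ind_{E_k}$ vanishes off $U$, and by Step 1 we have $\sum_k \ind_{E_k}\le K-1$ on $U$ outside a null set. Since $\mes(\Omega)<\infty$ the sums are finite and Tonelli's theorem applies, giving
\[
\sum_k \mes(E_k)=\int_\Omega \sum_k \ind_{E_k}\,d\mes=\int_U \sum_k \ind_{E_k}\,d\mes\le (K-1)\,\mes(U)\le K\,\mes(U),
\]
which is exactly the asserted inequality $\mes\bigl(\bigcup_k E_k\bigr)\ge \frac1K\sum_k \mes(E_k)$ (in fact one even gets the factor $\frac{1}{K-1}$).

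The argument is essentially trivial; the only point deserving any attention is Step 1, where one must invoke the countability of the index set to pass from the "no $K$ of the $E_k$ have a common point of positive measure" formulation of the hypothesis to the almost-everywhere bound on the overlap function $\sum_k\ind_{E_k}$. No real obstacle is anticipated.
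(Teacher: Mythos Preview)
Your proof is correct and is exactly the paper's argument spelled out in more detail: the paper simply writes $1_{\bigcup_k E_k}\geq \frac{1}{K}\sum_k 1_{E_k}$ almost everywhere and integrates. Your Step~1 is the justification of this pointwise inequality, and your observation that one actually gets the constant $\frac{1}{K-1}$ is a valid (if inessential) sharpening.
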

\begin{proof}
$1_{\bigcup E_i}\geq \frac{1}{K}\sum 1_{E_i}$ almost everywhere.
\end{proof}

\begin{prop}\label{Sylvester} For every non-empty open interval $I\subset [0,1]$,\\
$\Card\{(m,n)\in\{0,\ldots,N\}^2:\frac{m}{n}\in I\ , \ \gcd(m,n)=1\}\sim 3\mes(I)N^2/\pi^2$, as $N\to\infty$.
\end{prop}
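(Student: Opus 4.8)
The plan is to prove Proposition \ref{Sylvester} by reducing the count of coprime pairs to a count of all lattice points via Möbius inversion, and then recognizing the all-points count as a Riemann sum for the area of a triangular region.

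First I would introduce the notation $N_1(N,I):=\Card\{(m,n)\in\{0,\ldots,N\}^2:\ m/n\in I,\ \gcd(m,n)=1\}$ and $N_\ast(N,I):=\Card\{(m,n)\in\{1,\ldots,N\}^2:\ m/n\in I\}$, the latter counting \emph{all} (not necessarily coprime) pairs with positive entries. (The pairs with $m=0$ or $n=0$ contribute $O(N)$ and are negligible, and for $I\subset[0,1]$ with $0\notin \bar I$ they don't occur at all; in any case a trivial bound absorbs them into the error term, so it suffices to treat positive entries.) The key identity is $N_\ast(N,I)=\sum_{d=1}^{N}N_1(\lfloor N/d\rfloor, I)$, obtained by writing $(m,n)=(dm',dn')$ with $\gcd(m',n')=1$ and noting $m/n=m'/n'$. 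The region $\{(x,y)\in(0,1]^2: x/y\in I\}$ is a triangle (an angular sector based at the origin, truncated to the unit square), whose area is straightforward to compute: if $I=(a,b)$ then it equals $\tfrac12(b-a)$ when $[a,b]\subset[0,1]$, which is exactly $\tfrac12\mes(I)$. Hence $N_\ast(N,I)=\tfrac12\mes(I)N^2+o(N^2)$, since the number of integer points in the dilated triangle $N\cdot\{x/y\in I,\ 0<x,y\le1\}$ is its area plus a boundary error of order $O(N)$ (the boundary consists of two line segments and an arc of a line, each meeting at most $O(N)$ lattice points).

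Then I would invert: from $N_\ast(N,I)=\sum_{d\geq 1}N_1(\lfloor N/d\rfloor,I)$ one gets, by Möbius inversion on the arithmetic function $d\mapsto N_1(\lfloor N/d\rfloor,I)$ — or more cleanly by the standard hyperbola/summation-by-parts argument — that $N_1(N,I)=\sum_{d=1}^N \mu(d) N_\ast(\lfloor N/d\rfloor, I)$. Substituting the asymptotic $N_\ast(K,I)=\tfrac12\mes(I)K^2+O(K)$ yields
$$
N_1(N,I)=\tfrac12\mes(I)\sum_{d=1}^N\frac{\mu(d)}{d^2}N^2 + O\!\Big(\sum_{d=1}^N \frac{N}{d}\Big)+O\!\Big(\mes(I)N^2\!\!\!\sum_{d>N}\frac{1}{d^2}\Big),
$$
and since $\sum_{d=1}^\infty \mu(d)/d^2 = 1/\zeta(2) = 6/\pi^2$ and the error terms are $O(N\ln N)+O(N)=o(N^2)$, we conclude $N_1(N,I)\sim \tfrac12\mes(I)\cdot\tfrac{6}{\pi^2}N^2 = 3\mes(I)N^2/\pi^2$, as claimed. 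The only subtlety is making the interchange of summations and the tail estimates rigorous, but these are entirely standard once one has the uniform bound $N_\ast(K,I)\le (K+1)^2$ to control the $d$ near $N$.

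The main obstacle — really the only nontrivial point — is establishing the lattice-point count $N_\ast(K,I)=\tfrac12\mes(I)K^2+O(K)$ with an error term uniform enough in $I$ to survive the Möbius summation. This is a Gauss-circle-type estimate for a triangle: the region is convex with piecewise-linear boundary, so the classical bound (number of interior lattice points $=$ area $+\,O(\text{perimeter})$) applies, and the perimeter is bounded independently of the (fixed) interval $I$. One should take a little care that the implied constant in the $O(K)$ error does not blow up; since $I$ is fixed throughout the proposition, uniformity in $I$ is not actually needed — only uniformity of the error in the dilation parameter $K$, which the Gauss bound provides. With that in hand the rest is bookkeeping.
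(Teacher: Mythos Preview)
Your argument is correct and is precisely the approach the paper has in mind: the paper does not give a proof at all but simply cites Hardy--Wright, remarking that the result ``follows from the inclusion-exclusion principle and the identity $\zeta(2)=\pi^2/6$''. Your M\"obius-inversion computation, combined with the Gauss lattice-point count for the triangular sector and the evaluation $\sum_{d\geq 1}\mu(d)/d^2=6/\pi^2$, is exactly that argument written out in full.
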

\begin{proof}
This  classical fact due to  Dirichlet follows from the inclusion-exclusion principle and the identity
$\zeta(2)=\pi^2/6$,  see \cite[Theorem 459]{HardyWright}.
\end{proof}

\begin{prop}\label{MultiStepCF}
Suppose $\alpha=[0;a_1,a_2,\ldots]$ and $\ov{\alpha}=[0;a_{\ell+1},a_{\ell+2},\ldots]$. Then the principal convergents $\ov{p}_{\ov{\ell}}/\ov{q}_{\ov{\ell}}$ of $\ov{\alpha}$ and the principal convergents $p_\ell/q_\ell$
of $\alpha$ are related by
 $\left(\begin{array}{cc} p_{l+\brl} & p_{l+\brl+1} \\  q_{l+\brl} & q_{l+\brl+1} \end{array}\right)  =
 \left(\begin{array}{cc} p_{l-1} & p_{l} \\  q_{l-1} & q_{l} \end{array}\right)
 \left(\begin{array}{cc} \bar{p}_{\brl} & \bar{p}_{\brl+1} \\  \bar{q}_{\brl} & \bar{q}_{\brl+1} \end{array}\right)
$
\end{prop}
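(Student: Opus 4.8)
The plan is to derive the identity from the standard matrix (``cocycle'') description of principal convergents. For $a\in\N$ set
$$
M(a):=\left(\begin{array}{cc} 0 & 1\\ 1 & a\end{array}\right),
$$
and first I would record the elementary fact that for \emph{every} irrational $\beta=[0;b_1,b_2,\ldots]\in(0,1)$ with principal convergents $P_k/Q_k$ one has
$$
\left(\begin{array}{cc} P_{k-1} & P_k\\ Q_{k-1} & Q_k\end{array}\right)=M(b_1)M(b_2)\cdots M(b_k)\qquad (k\ge 0),
$$
where the empty product ($k=0$) is the identity matrix. This is an induction on $k$: the inductive step is just the three-term recursion $P_k=b_kP_{k-1}+P_{k-2}$, $Q_k=b_kQ_{k-1}+Q_{k-2}$ rewritten as $\left(\begin{array}{cc}P_{k-1}&P_k\\Q_{k-1}&Q_k\end{array}\right)=\left(\begin{array}{cc}P_{k-2}&P_{k-1}\\Q_{k-2}&Q_{k-1}\end{array}\right)M(b_k)$, and the base case $k=0$ uses the usual normalization $P_{-1}=1$, $Q_{-1}=0$ together with $P_0=b_0=0$, $Q_0=1$ (here $b_0=0$ because $\beta\in(0,1)$; the same applies to $\alpha$ and $\ov\alpha$).

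Granting this, the proposition is immediate. Applying the displayed formula to $\beta=\alpha$ at level $k=l+\brl+1$ and splitting the matrix product after its first $l$ factors gives
$$
\left(\begin{array}{cc} p_{l+\brl} & p_{l+\brl+1}\\ q_{l+\brl} & q_{l+\brl+1}\end{array}\right)=\bigl(M(a_1)\cdots M(a_l)\bigr)\bigl(M(a_{l+1})\cdots M(a_{l+\brl+1})\bigr).
$$
By the formula again, the first factor equals $\left(\begin{array}{cc}p_{l-1}&p_l\\q_{l-1}&q_l\end{array}\right)$. For the second factor, recall that $\ov\alpha=[0;a_{l+1},a_{l+2},\ldots]$, so its partial quotients are $a_{l+1},a_{l+2},\ldots$; hence the formula applied to $\beta=\ov\alpha$ at level $\brl+1$ gives $M(a_{l+1})\cdots M(a_{l+\brl+1})=\left(\begin{array}{cc}\bar p_{\brl}&\bar p_{\brl+1}\\\bar q_{\brl}&\bar q_{\brl+1}\end{array}\right)$. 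Substituting these two identifications yields the asserted factorization.

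I do not expect a genuine obstacle here: the whole argument is bookkeeping. The two points needing a little care are (i) using the ``shifted'' form of the convergent matrix, with columns indexed by $k-1$ and $k$ rather than $k$ and $k-1$, which is precisely what makes the concatenation of the product for $\alpha$ with that for $\ov\alpha$ line up on indices; and (ii) the normalization $a_0=\bar a_0=0$, which is what makes the $k=0$ instance of the matrix identity the identity matrix rather than a generic matrix. Everything else is a direct multiplication, and one can additionally read off the familiar relations (e.g.\ $q_{l+\brl+1}=q_l\bar q_{\brl+1}+q_{l-1}\bar q_{\brl}$) as special entries if they are needed later.
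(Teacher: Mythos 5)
Your proof is correct and follows essentially the same route as the paper's: both express the convergent matrix $\left(\begin{smallmatrix} p_{k-1} & p_k \\ q_{k-1} & q_k\end{smallmatrix}\right)$ as the product $M(a_1)\cdots M(a_k)$ using $a_0=0$, then split the product after the first $l$ factors and identify the tail with the convergent matrix of $\overline{\alpha}$. The only cosmetic difference is that you normalize via $P_{-1}=1, Q_{-1}=0$ to get the $k=0$ case as the identity matrix, whereas the paper anchors the induction at $\left(\begin{smallmatrix} p_0 & p_1 \\ q_0 & q_1\end{smallmatrix}\right)=M(a_1)$.
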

\begin{proof}
Since $a_0=0$,
the recurrence relations for $p_n/q_n$ imply
$$\left({
\begin{array}{ll} p_{n} & p_{n+1}\\ q_{n} & q_{n+1}
\end{array}
}\right)=\left({
\begin{array}{ll} p_{n-1} & p_{n}\\ q_{n-1} & q_{n}
\end{array}
}\right) \left(
\begin{array}{lc} 0 & 1\\ 1 & a_{n+1}
\end{array}
\right)\ , \
\left(\begin{array}{ll} p_{0} & p_{1}\\ q_{0} & q_{1}
\end{array}
\right)
=\left(\begin{array}{ll} 0 & 1\\ 1 & a_1
\end{array}
\right).
$$
So
$\left({\small \begin{array}{ll} p_{n} & p_{n+1}\\ q_{n} & q_{n+1}
\end{array}
}\right)=\left({\small \begin{array}{ll} 0 & 1\\ 1 &  a_{1}
\end{array}
}\right)\cdot\ldots\cdot \left({\small \begin{array}{lc} 0 & 1\\ 1 & a_{n+1}
\end{array}
}\right)
$. It follows that
$
\left(\begin{array}{cc} p_{l+\brl} & p_{l+\brl+1} \\  q_{l+\brl} & q_{l+\brl+1} \end{array}\right)  =
\left(\begin{array}{cc} p_{l-1} & p_{l} \\  q_{l-1} & q_{l} \end{array}\right)
 \left(\begin{array}{cc} \bar{p}_{\brl} & \bar{p}_{\brl+1} \\  \bar{q}_{\brl} & \bar{q}_{\brl+1} \end{array}\right),
$
where $\brp_i/\brq_i$ are the principal convergents of $\bar{\alpha}:=[0;a_{l+1},a_{l+2},\ldots]$.
\end{proof}

%
%

\subsection{Proof of Lemma \ref{LmGA}}
Without loss of generality,
$\displaystyle \lim_{t\to\infty} \psi(t)=\infty$, otherwise replace $\psi(t)$ by
the bigger monotone function $\psi(t)+\ln t$.

Fix $M>1$, to be determined later. Let
\begin{align*}
&\Omega_k:=\{(m,n)\in\N^2: n\in\cN,n\in [e^{k-1},e^k],0<m<n, \gcd(m,n)\leq M\},\\
&A_{m,n,k}:=\{\alpha\in\T: |n\alpha-m|\leq \tfrac{1}{ e^k \psi(e^k)}\}, \\
&\cA_k:=\bigcup_{(m,n)\in\Omega_k} A_{m,n,k},\\
&\cA:=\{\alpha\in\T: \alpha\text{ belongs to infinitely many }\cA_k\}.
\end{align*}
The lemma is equivalent to saying that $\cA$  has full Lebesgue measure for a suitable choice of $M$.

We will prove a slightly different statement. Fix $\eps>0$ small.  Given an non-empty interval $I\subset [\eps,1-\eps]$,
let
\begin{align*}
&\Omega_k(I):=\{(m,n)\in\Omega_k: \frac{m}{n}\in I\}\\
&\cA_k(I):=\bigcup_{(m,n)\in\Omega_k(I)}A_{m,n,k}\\
&\cA(I):=\{\alpha\in\T: \alpha\text{ belongs to infinitely many }\cA_k(I)\}.
\end{align*}
We will prove that there exists a positive constant $\delta=\delta(\eps,M)$ s.t. for all intervals $I\subset [\eps,1-\eps]$,  $\mes(\cA(I)\cap I)\geq \delta\mes(I)$. It then follows by a standard density point argument (see below) that $\cA\cap [\eps,1-\eps]$ has full measure. Since $\eps$ is arbitrary,  the lemma is proved.

\medskip
\noindent
{\sc Claim 1.} There exist $K=K(\eps)$ s.t. for every $k> K$, the multiplicity of $\{A_{m,n,k}\}_{(m,n)\in\Omega_k(I)}$ is uniformly bounded by $M$.

\medskip
\noindent
{\sc Proof:} Suppose $(m_i,n_i)\in\Omega_k(I)$ and
$A_{m_1, n_1, k}\cap A_{m_2, n_2, k}\neq \emptyset$. Then  there is $\alpha$ s.t.
$|n_i\alpha-m_i|\leq \delta_k:=\frac{1}{ e^k \psi(e^k)}$.
Choose $K=K(\epsilon)$ so large that $k>K\Rightarrow \delta_k<\frac{\eps}{4e^k}$.

If $k>K$, then
$\alpha\geq \frac{m_i}{n_i}-\delta_k> \min I-\frac{\eps}{2}>\frac{\eps}{2}$.
Let $r_i:=\gcd(m_i,n_i)$ and $(n_i^\ast,m_i^\ast):=\frac{1}{r_i}(n_i,m_i)$. Then $|n_i^\ast\alpha-m_i^\ast|\leq \delta_k$ and  $m_i^\ast\leq n_i^\ast\leq n_i\leq e^k$, so
$
|n_2^\ast m_1^\ast-n_1^\ast m_2^\ast|=\frac{1}{\alpha}|m_1^\ast(n_2^\ast\alpha-m_2^\ast)- m_2^\ast(n_1^\ast \alpha-m_1^\ast)|\leq \frac{2e^k \delta_k}{\eps/2}<1.
$
So $n_2^\ast m_1^\ast=n_1^\ast m_2^\ast$. Since $\gcd(n_i^\ast,m_i^\ast)=1$, $(n_1^\ast,m_1^\ast)=(n_2^\ast,m_2^\ast)$. It follows that
$
(n_2,m_2)\in\{(rn_1^\ast,rm_1^\ast):r=1,\ldots,M\}
$.
So the multiplicity of $\{A_{m,n,k}\}_{(m,n)\in\Omega_k(I)}$ is uniformly bounded by $M$.

\medskip
\noindent
{\sc Claim 2.} Let $d(\cN):=\liminf \frac{1}{N}\Card(\cN\cap [1,N])>0$, then  there exists $M=M(\cN)$ and  $\tK=\tK(\eps,\cN, |I|)$  s.t. for all $k>\tK$,
\begin{equation}\label{mes[A(I)]}
 \frac{d(\cN)\mes(I)}{4M{\psi(e^k)}}\leq \mes(\cA_k(I))\leq \frac{6\mes(I)}{ \psi(e^k)}.
\end{equation}
In particular, $\mes(\cA_k(I))\xrightarrow[k\to\infty]{}0$ and $\sum \mes(\cA_k(I))=\infty$.

\medskip
\noindent
{\sc Proof:}
$ \mes(A_{m,n,k} )=\mes\left(\left[\frac{m}{n}-r_{m,n}, \frac{m}{n}+r_{m,n}\right]\right)=
2 r_{m,n} $
where
$ r_{m,n}={ \dfrac{1}{n e^k \psi(e^k)}}$. Since $n\in [e^{k-1},e^k]$,
\begin{equation}\label{mes(A_k(I))}
 \frac{\Card(\Omega_k(I))}{M { e^{2k} \psi(e^k)}}\leq \mes\bigl(\cA_k(I)\bigr)\leq
 \frac{{ e}\;\Card(\Omega_k(I))}{{e^{2k} \psi(e^k)}},
\end{equation}
where the lower bound uses Claim 1 and Proposition \ref{LmFinMult}.

$\Card(\Omega_k(I))$ satisfies the bounds  $A-B\leq \Card(\Omega_k(I))\leq A$ where
\begin{align*}
A&:=\Card\{(m,n): n\in \cN,\; n\in [e^{k-1}, e^{k}],\; \frac{m}{n}\in I\}\\
B&:=\Card\{(m,n): n\in \cN,\; n\in [e^{k-1}, e^{k}],\; \frac{m}{n}\in I, \gcd(m,n)\geq M\}.
\end{align*}

Choose $\tK=\tK(\eps,\cN,  |I|)>K(\eps)$ s.t. for all
$k>\tK$
\begin{enumerate}[(a)]
\item $
\Card\{n\in\cN: 0\leq n\leq e^k\}\geq \frac{1}{\sqrt{2}}d(\cN)
$
\item    $\Card\{n\in [e^{k-1},e^k]\cap\N:p|n\}\leq 2(e^k-e^{k-1})/p$ for all
$p\geq 1$;
\item  For all $n>e^{\tK-1}$, $p\geq 1$,  
$$
\frac{n}{p\sqrt{2}}\mes(I)\leq\Card\{m\in \N: \frac{m}{n}\in I, p|m\}\leq \frac{2n}{p}\mes(I).
$$
\end{enumerate}
If $k>\tK$, then $\frac{1}{2}d(\cN) e^{2k}\mes(I)\leq A\leq 2 e^{2k}\mes(I)$
and
\begin{align*}
B&\leq \sum_{p=M}^\infty \Card\{(m,n): n\in [e^{k-1}, e^k],\;\frac{m}{n}\in I, p|m, p|n \}
\end{align*}
\begin{align*}
&\leq \sum_{p=M}^\infty \frac{2(e^k-e^{k-1})}{p}\cdot \frac{2e^k\mes(I)}{p}<4e^{2k}\mes(I)\sum_{p=M}^\infty\frac{1}{p^2}\\
&\leq \frac{1}{4}d(\cN)e^{2k}\mes(I),\text{ provided  we choose $M$ s.t. $\sum_{p=M}^\infty p^{-2}<\frac{1}{16}d(\cN)$.}
\end{align*}
Together we get $\frac{1}{4} d(\cN) e^{2k}\mes(I)\leq \Card(\Omega_k(I))\leq 2 e^{2k}\mes(I)$.
The claim now follows from \eqref{mes(A_k(I))}.

\medskip
\noindent
{\sc Claim 3.} There exists $D=D(\cN,M)$, $r=r(M)$, and $\hK=\hK(\eps,\cN, I)$ s.t. for all $k_1,k_2>\hK$ s.t. $|k_1-k_2|>r(M)$,
\begin{equation}\label{Quasi-Independence}
\mes(\cA_{k_1}(I)\cap\cA_{k_2}(I)| I)\leq D\mes(\cA_{k_1}(I)| I)\mes(\cA_{k_2}(I)|I)
\end{equation}

\medskip
\noindent
{\sc Proof:} By Claim 2, if $k_1, k_2$ are large enough,  then
\begin{equation}\label{RHS-below}
\mes(\cA_{k_1}(I)| I)\mes(\cA_{k_2}(I)| I)
\geq \left(\frac{d(\cN)}{5M}\right)^2
{\frac{1}{\psi(e^{k_1})\psi(e^{k_2})},}
\end{equation}
where we put $5$ instead of $4$ in the denominator to deal with edge
effects arising from $\mes(\cA_k(I)\setminus I)=O\left(\frac{1}{e^k \psi(e^k)}\right)$.

To prove the claim, it remains to bound
$\displaystyle \mes\left(\cA_{k_1}(I) \cap \cA_{k_2}(I)\Big| I\right)$ from above by
$\frac{\text{const}}{R_1 R_2}$, where $R_i:={\psi(e^{k_i})}$.

A {\em cylinder} is a set of the form
$$
[\![a_1,\ldots,a_n]\!]=\{\alpha\in (0,1)\setminus\Q: a_i(\alpha)=a_i\ \ (1\leq i\leq n)\}.
$$
Equivalently, $\alpha\in [\![a_1,\ldots,a_n]\!]$ iff $\alpha$ has an infinite continued fraction expansion of the form $\alpha=[0;a_1,\ldots,a_n,\ast,\ast,\ldots]$.

Our plan is to cover $\cA_{k_i}(I)$  by unions of {cylinders}  of total measure $O(1/R_i)$,
and then use the following well-known  fact: There is a constant $G>1$ s.t. for any $(a_1,\ldots,a_n,b_1,\ldots,b_m)\in\N^{n+m}$,
\begin{equation}\label{Gibbs}
G^{-1}\leq \frac{\mes[\![a_1,\ldots,a_n;b_1,\ldots,b_m]\!]}{\mes[\![a_1,\ldots,a_n]\!]\mes[\![b_1,\ldots,b_m]\!]}\leq G.
\end{equation}
This is because the invariant measure $\frac{1}{\ln 2}\frac{dx}{1+x}$ of $T:(0,1)\to (0,1)$, $T(x)=\{\frac{1}{x}\}$ (the Gauss map) is a Gibbs-Markov measure, because of the bounded distortion of $T$, see \S2 in \cite{Aaronson-Denker-Urbanski}.

To  cover $\cA_{k_i}(I)$ by cylinders, it is enough to cover $A_{m,n,k_i}$ by cylinders for every $(m,n)\in\Omega_{k_i}(I)$. Suppose $\alpha\in A_{m,n,k_i}$.   Then $r:=\gcd(m,n)\leq M$ and $(m^\ast,n^\ast):=\frac{1}{r}(m,n)$ satisfies
$$
\gcd(m^\ast,n^\ast)=1,\ n^\ast\in \bigcup_{|k_i^\ast-k_i|\leq \ln M}[e^{k_i^\ast-1},e^{k_i^\ast}],\
|n^\ast\alpha-m^\ast|< \frac{1}{n^\ast R_i}.
$$

Assume $k_i$ is so large that $R_i=\psi(e^{k_i})\geq 4.$ Then Lemma \ref{LmOneApp} gives
$a_{l_i+1}>\frac{R_i}{2}.$
Thus $\cA_{k_i}(I)\subset \cC_{k_i}(I, R_i)$ where
$$
\cC_{k}(I,R):=\bigcup_{ k^\ast\in [k-\ln M, k]} \left\{\alpha\in (0,1)\setminus\Q: \exists\ell\text{ s.t. }
\begin{array}{l}
q_\ell(\alpha)\in [e^{k^\ast-1},e^{k^\ast}],\\
a_{{\ell}+1}
(\alpha)\geq R/2\\
p_\ell(\alpha)/q_\ell(\alpha)\in I
\end{array}\right\}.
$$
This is a union of cylinders, because $q_\ell(\alpha), p_\ell(\alpha), a_{\ell+1}(\alpha)$ are constant on cylinders of length $\ell+1$.

We claim that for some $c^\ast(M)$ which only depends on $M$,
for all $k_i$ large enough,
\begin{equation}
\label{MesCkR}
\mes(\cC_{k_i}(I,R_i))\leq \frac{c^*(M) \mes(I)}{R_i}.
\end{equation}
Every rational $\frac{m}{n}\in (0,1)$ has two finite continued fraction expansions: $[0;a_1,\ldots,a_{\ell}]$ and $[0;a_1,\ldots,a_{\ell}-1,1]$ with $a_{\ell}>1$. We write $\ell=\ell(\frac{m}{n})$ and  $a_i=a_i(\frac{m}{n})$. With this notation
\begin{align*}
&\cC_{k_i}(I,R_i)=\underset{n\in [e^{k_i^\ast-1},e^{k_i^\ast}]}{\bigcup_{k_i^\ast\in [k_i-\ln M, k_i]}}\;\underset{m/n\in I}{\bigcup_{\gcd(m,n)=1}}\; \bigcup_{ b>R_i/2} [\![a_1(\tfrac{m}{n}),\ldots,a_{\ell(\frac{m}{n})}(\tfrac{m}{n}),b]\!]\\
&\hspace{6.5cm}\cup [\![a_1(\tfrac{m}{n}),\ldots,a_{\ell(\frac{m}{n})}(\tfrac{m}{n})-1,1,b]\!].
\end{align*}
We have $[\![a_1,\ldots,a_\ell]\!]=(\frac{p_\ell+p_{\ell-1}}{q_\ell+q_{\ell-1}},\frac{p_\ell}{q_\ell})$ or  $(\frac{p_\ell}{q_\ell},\frac{p_\ell+p_{\ell-1}}{q_\ell+q_{\ell-1}})$, depending on the parity of $\ell$  \cite{Khintchine-Continued-Fractions}. Since {
$|p_\ell q_{\ell-1}-p_{\ell-1}q_\ell|=1$} and
{ $q_{\ell+1}=a_{\ell+1}q_\ell+q_{\ell-1}$,}
 we have
$\mes([\![a_1,\ldots,a_{\ell},b]\!])=\tfrac{1}{q_{\ell+1}(q_{\ell+1}+q_\ell)}=\tfrac{1}{(bq_{\ell}+q_{\ell-1})((b+1)q_\ell+q_{\ell-1})}\leq \tfrac{1}{b(b+1) q_\ell^2},$
leading to
\begin{align*}
&\mes(\cC_{k_i}(I,R_i))\leq
\sum_{k_i^\ast\in [k_i-\ln M, k_i]}\sum_{n\in [e^{k_i^\ast-1},e^{k_i^\ast}]}\underset{m/n\in I}{\sum_{\gcd(m,n)=1}}\sum_{b>R_i/2}\frac{2}{n^2 b(b+1)}\\
&\leq \frac{  8\ln M}{e^{2(k_i-1-\ln M)} R_i}\sum_{n= 1}^{e^{k_i}M}\#\{m\in\N: \frac{m}{n}\in I, \gcd(m,n)=1\}
\leq \frac{c^\ast(M)}{R_i}\mes(I)
\end{align*}
where $c^\ast(M)$ only depends on $M$. The last step uses Prop. \ref{Sylvester}.

Next we cover $\cA_{k_1}(I)\cap\cA_{k_2}(I)$ by cylinders. Suppose without loss of generality that $k_2>k_1$.
Arguing as before one sees that  if
\begin{equation}
\label{KAstApart}
k_2>k_1+\ln M+1,
\end{equation}
then $\cA_{k_1}(I)\cap\cA_{k_2}(I)$ can be covered by sets
$
[\![a_1,\ldots,a_\ell,b,\bra_1,\ldots,\bra_{\brl},\brb]\!]
$ as follows:
The  convergents $p_i/q_i$ of (every) $\alpha$ in $
[\![a_1,\ldots,a_\ell,b,\bra_1,\ldots,\bra_{\brl},\brb]\!]
$, ($1\leq i\leq l+\brl+2$), satisfy
\begin{enumerate}[(a)]
\item $q_l\in [e^{k_1^\ast-1},e^{k_1^\ast}]$,
$ k_1^\ast\in [k_1-\ln M, k_1]$, $p_l/q_l\in I$,
$b\geq {R_1/2}$;
\item $q_{l+\brl+1}\in [e^{k_2^\ast-1},e^{k_2^\ast}]$,
$k_2^\ast\in [k_2-\ln M, k_2]$, $p_{\brl}/q_{\brl}\in I$,
$\brb\geq {R_2/2}$
\item $k_2^\ast>k_1^\ast$ (this is where \eqref{KAstApart} is used).
\end{enumerate}

We claim that
\begin{equation}
\label{FirstCyl}
[\![a_1,\ldots,a_\ell,b]\!]\subset\cC_{k_1}(I),
\end{equation}
\begin{equation}
\label{BSize}
b\leq e^{k_2^\ast-k_1^\ast+1},
\end{equation}
\begin{equation}
\label{SecondCyl}
 [\![\ov{a}_1,\ldots,\ov{a}_{\ov{\ell}},\ov{b}]\!]\subset \bigcup_{|r|\leq 3}
\cC_{k_2-k_1+r-\ln b}([0,1],R_2).
\end{equation}
 \eqref{FirstCyl} follows from (a).
Next, $e^{k_2^\ast}\geq q_{l+1}\geq b q_l\geq b e^{k_1^\ast-1} $
proving \eqref{BSize}.
To prove  \eqref{SecondCyl},
let $\brp_i/\ov{q}_i$, $1\leq i\leq \ov{\ell}+2$, be the
principal convergents of (every)
$\ov{\alpha}\in [\![b, \ov{a}_1,\ldots,\ov{a}_{\ov{\ell}},\ov{b}]\!]$.
By Prop.~\ref{MultiStepCF}, $q_{l+1+\brl}=q_{l-1}\brp_{\brl+1}+q_l\brq_{\brl+1}$, whence
$q_l\brq_{\brl+1}\leq q_{l+1+\brl}\leq 2q_l\brq_{\brl+1}$. Since $q_l\in [e^{k_1^\ast-1},e^{k_1^\ast}]$ and $q_{l+\brl+1}\in [e^{k_2^\ast-1},e^{k_2^\ast}]$,
\begin{equation}\label{Gibbs-prime}
e^{k_2^\ast-k_1^\ast-2}\leq \frac{q_{l+1+\brl}}{2q_l}\leq \brq_{\brl+1}\leq \frac{q_{l+1+\brl}}{q_l}\leq e^{k_2^\ast-k_1^\ast+1}.
\end{equation}
Next, let $\wt{p}_i/\wt{q}_i$ ($1\leq i\leq \brl$) denote the principal convergents of (every) $\wt{\alpha}\in [\![ \ov{a}_1,\ldots,\ov{a}_{\ov{\ell}},\ov{b}]\!]$. Then $\frac{\brp_{\brl+1}}{\brq_{\brl+1}}=1/(b+\frac{\wt{p}_{\brl}}{\wt{q}_{\brl}})$, so $\brq_{\brl+1}=b\wt{q}_{\brl}+\wt{p}_{\brl}$, whence
$b\wt{q}_{\brl}\leq \brq_{\brl+1}\leq (b+1)\wt{q}_{\brl}$. Thus $\wt{q}_{\brl}\in [(b+1)^{-1}\brq_{\brl+1},b^{-1}\brq_{\brl+1}]$. It follows that the $\brl$-th principal convergent of every $\wt{\alpha}\in [\![ \ov{a}_1,\ldots,\ov{a}_{\ov{\ell}},\ov{b}]\!]$ satisfies
\begin{equation}\label{Gibbs-Double-Prime}
\wt{q}_{\brl}\in [e^{k_2^\ast-k_1^\ast-3-\ln b},e^{k_2^\ast-k_1^\ast+1-\ln b}].
\end{equation}
It is now easy to see \eqref{SecondCyl}.

By \eqref{SecondCyl},
$\displaystyle
\cA_{k_1}(I)\cap\cA_{k_2}(I)\cap I\subset \bigcup_{|r|\leq 3}\biguplus_{[\un{a},b]\subset \cC_{k_1}(I)}{\biguplus_{[\un{a}',b']\subset \cC_{k_2-k_1+r-{ \ln b}}([0,1])}}\!\!\!\!\!\!\!\!\!\!\!\!\!\!\!\!\!\![\![\un{a},b,\un{a}',{b}]\!].
$
 Now arguing as in the proof of \eqref{MesCkR} and using \eqref{Gibbs} we obtain
\begin{align}
&\mes(\cA_{k_1}(I)\cap\cA_{k_2}(I)\cap I)\leq \notag\\
&\leq
\!\!\!\! \sum_{\overset{{k_i^\ast\in [k_i-\ln M, k_i]}}{i=1,2 ; |r|\leq 3}}\;\;
\!\!\sum_{n\in [e^{k_1^\ast-1},e^{k_1^\ast}]}\;\;
\sum_{\underset{m/n\in I}{\gcd(m,n)=1}}
\sum_{b=[R_1/2]}^{[\exp(k_2^\ast-k_1^\ast+1)]}
\tfrac{2G\mes(\cC_{k_2-k_1+r-{\ln b}}([0,1], R_2)) }{n^2 b(b+1)}
\notag \\
&\leq \frac{\const\mes(I)}{R_1 R_2}.\label{kim-jong-un}
\end{align}

\eqref{kim-jong-un} uses the estimate $\mes(\cC_{k_2-k_1+r-{\ln b}}([0,1],{R_2}))=O(1/R_2)$ which is also valid when
$k_2-k_1+r-{\ln b}$ is small, provided we choose $M$ large enough so that the asymptotic in Prop. \ref{Sylvester} holds for all $N>M$ with $I=[0,1]$. See the proof of \eqref{MesCkR}.

Combining \eqref{kim-jong-un} with \eqref{RHS-below}, we find that
{ under \eqref{KAstApart}}
$\cA_{k_i}(I)$ are $D$-quasi-independent for sufficiently large $D$, proving Claim 3.

\medskip
Claims 2 and 3 allow us to apply Sullivan's Borel--Cantelli Lemma (Prop. \ref{LmBCSul}). We obtain $\delta=\delta(M)$ s.t. for every interval $I\subset [\eps,1-\eps]$, $\mes(\cA\cap I)\geq \delta\mes(I)$. This means that $[\eps,1-\eps]\setminus\cA$ has no Lebesgue density points, and therefore must have measure zero.
So $\cA$ has full measure in $[\eps,1-\eps]$. Since $\eps$ is arbitrary, $\cA$ has full measure. \hfill$\Box$

\section{Proof of Theorem \ref{ThBeckConverse}}
\label{ScResII}
As explained in Section \ref{ScPrel}, it is enough to prove Theorem \ref{ThBeckConverse} for
$ f(x):=\sum_{m=1}^d b_m h(x+\beta_m)\not\equiv 0$ with $h(x)=\{x\}-\frac{1}{2}$. Without loss of generality,  $\b_i$ are different and $b_i\neq 0$.
Notice that
$$ \displaystyle f(x)=-
\sum_{m=1}^d b_m\sum_{j=1}^\infty \frac{\sin(2\pi j (x+\beta_m))}{\pi j}.$$
Therefore  $\displaystyle \|f\|_2^2=\frac{1}{2\pi^2}\sum_n \frac{1}{ n^2} D(\beta_1 n,\ldots,\beta_d n)$,
where $D:\T^d\to \R$ is
\begin{align}\label{D-function}
D(\gamma_1, \dots, \gamma_d)&:=\int_0^1 \left[\sum_{m=1}^d b_m \sin(2\pi (y+\gamma_m))\right]^2 dy.
\end{align}
Since $f\not\equiv 0$, $D(\beta_1 n,\dots, \beta_d n)>0$ for some $n$.
Let $\fT$ denote the closure in $\T^d$ of
$\mathbb{O}:=\{(\beta_1 n,\dots, \beta_d n)  \mod\Z: n\in\Z\}.$
This is a minimal set for the translation by $(\beta_1,\ldots,\beta_d)$ on $\T^d$, so
a standard compactness argument shows that
the set
\begin{equation}
\label{RelBigFC}
\cN:=\{n\in \N: \; D(\beta_1 n,\ldots,\beta_d n)> \eps_0\}
\end{equation}
is syndetic: its gaps are bounded.
Thus  $\cN$  has positive lower density.

By Theorem \ref{LmLargePC}, if $M$ is sufficiently large then the set
$\cA:=\cA(\cN,M)$ has full measure in $\T.$ Let
$$
S_n(\alpha,x):=\sum_{k=0}^{n-1}f(x+k\alpha).
$$
The proof of Theorem \ref{ThBeckConverse} for $f(x)$ above consists of two parts:
\begin{thm}
\label{ThUniform}
Suppose $\alpha\in \cA$,  then
for a.e. $x\in [0,1)$, there exist
$A_k(x)\in \R$ and  $B_k(x), N_k(x)\to\infty$  such that
$$
\frac{S_n(\alpha,x)-A_k(x)}{B_k(x)}\xrightarrow[k\to\infty]{\text{dist}}\mathrm{U}[0,1],\text{ as }n\sim \mathrm{U}(0,\ldots,N_k(x)).
$$
\end{thm}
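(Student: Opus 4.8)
The plan is to exhibit, for a.e.\ $x$, a subsequence $N_k\to\infty$ along which the graph of $n\mapsto S_n(\alpha,x)$ over $\{0,\dots,N_k\}$ is, after an affine rescaling, uniformly close to the graph of the identity on $[0,1]$; since the law of $n/N_k$ for $n\sim\mathrm U(0,\dots,N_k)$ tends to $\mathrm U[0,1]$, the theorem follows.

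Fix $\alpha\in\cA$, let $n_k\uparrow\infty$ and $r_k\le M$ be as in \eqref{OddDenom}, and abbreviate $q=q_{n_k}$, $\gamma=q\alpha-p_{n_k}$, $a=a_{n_k+1}$, $\Sigma_k=a_1+\cdots+a_{n_k}$, $c=\sum_{m=1}^d b_m$. Thus $|\gamma|=\|q\alpha\|$, $q|\gamma|=a^{-1}(1+o(1))$, and by the defining property of $\cA$, $\Sigma_k/a\to0$. Let $g:=\sum_{i=0}^{q-1}f(\,\cdot+i\alpha)$ be the $q$-th ergodic sum as a function on $\T$; it is piecewise linear of slope $cq$ with at most $dq$ discontinuities. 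Three facts drive the argument. \emph{(a)} By Denjoy--Koksma, $\|g\|_\infty\le\Var f=:C_0$ and $\int_\T g=0$; by the Ostrowski expansion, $\|S_s(\alpha,\cdot)\|_\infty=O(\Sigma_k)$ uniformly for $0\le s<q$. \emph{(b)} Expanding $g$ in Fourier series and retaining only the frequency $r_kq$ gives, since $r_kq\in\cN$, $\|g\|_{L^2(\T)}^2\ge(1+o(1))\,\frac{D(\beta_1r_kq,\dots,\beta_dr_kq)}{2\pi^2r_k^2}\ge\frac{\eps_0}{4\pi^2M^2}=:c_0^2$ for all large $k$; together with $\|g\|_\infty\le C_0$ this forces $\mes(G_k)\ge\rho:=c_0^2/(2C_0^2)>0$, where $G_k:=\{x\in\T:|g(x)|\ge c_0/\sqrt2\}$. \emph{(c)} For Lebesgue-a.e.\ $x$, $x\in G_k$ for infinitely many $k$.

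Granting (c): pass once and for all (independently of $x$) to a subsequence of $(n_k)$ along which $\eps_k:=(1+\Sigma_k)/a$ satisfies $\sum_k\sqrt{\eps_k}<\infty$ and along which $q_{n_k}$ grows fast enough for the Borel--Cantelli step in (c). Put $\delta_k:=\sqrt{\eps_k}\to0$ and $N_k:=\lfloor\delta_kq_{n_k+1}\rfloor$; then $\delta_k\gg\eps_k$ gives $\delta_ka\to\infty$, $\Sigma_k=o(\delta_ka)$, $N_k\to\infty$ (indeed $N_k/q=(1+o(1))\delta_ka$), and $\sum_k\delta_k<\infty$, so by Borel--Cantelli a.e.\ $x$ has: the $2\delta_k/q$-neighbourhood of $x$ meets no discontinuity of $g$ once $k$ is large. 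Fix such an $x$, also lying in the full-measure set of (c), and let $K=K(x)$ be the set of $k$ with $x\in G_k$. For $k\in K$ and $0\le n\le N_k$ write $n=jq+s$, $0\le s<q$; then $j\le N_k/q=(1+o(1))\delta_ka$, and $x+jq\alpha\equiv x+j\gamma\bmod 1$ stays within $\delta_k/q$ of $x$, where $g$ is affine. Telescoping, $S_n(\alpha,x)=\sum_{i=0}^{j-1}g(x+i\gamma)+S_s(\alpha,x+jq\alpha)=jg(x)+cq\gamma\tfrac{j(j-1)}2+S_s(\alpha,x+jq\alpha)$; the middle term is $O(\delta_k^2a)$ and the last is $O(\Sigma_k)$, both $o(|g(x)|\delta_ka)$ because $|g(x)|\ge c_0/\sqrt2$, $\delta_k\to0$, and $\Sigma_k=o(\delta_ka)$. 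Replacing $j$ by $n/q$ costs a further $O(C_0)=o(\delta_ka)$; so with $A_k:=0$ and $B_k:=|g(x)|N_k/q=(1+o(1))|g(x)|\delta_ka\to\infty$ we get $\sup_{0\le n\le N_k}\big|\tfrac{S_n(\alpha,x)-A_k}{B_k}-\sgn(g(x))\tfrac n{N_k}\big|\to0$ (replace $n$ by $N_k-n$, i.e.\ adjust $A_k$, if $g(x)<0$ along $K$). Thus for $n\sim\mathrm U(0,\dots,N_k)$ the left side converges in distribution to $\mathrm U[0,1]$, which is Theorem \ref{ThUniform}.

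The hard part is fact (c): that the resonant level sets $G_k=\{|S_{q_{n_k}}(\alpha,\cdot)|\ge c_0/\sqrt2\}$, of uniformly positive measure by (b), are entered infinitely often by a.e.\ $x$. Because $g=S_{q_{n_k}}(\alpha,\cdot)$ is piecewise linear with oscillation organised at the spatial scale $q_{n_k}^{-1}$, after thinning so that the $q_{n_k}$ are sufficiently spread out the events $G_k$ are quasi-independent, exactly as in Claim 3 of the proof of Lemma \ref{LmGA}. A Borel--Cantelli argument of Kochen--Stone type then gives $\mes(\limsup_k G_k)>0$ (Proposition \ref{LmBCSul}(a) does not apply verbatim, since $\mes(G_k)\not\to0$, but its proof scheme does), and a density-point argument as at the end of the proof of Lemma \ref{LmGA} upgrades this to $\mes(\limsup_k G_k)=1$. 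Everything else is bookkeeping fueled by the Denjoy--Koksma inequality and by the hypothesis $a_{n_k+1}\gg a_1+\cdots+a_{n_k}$ built into $\cA$.
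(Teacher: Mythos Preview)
Your overall skeleton---write $n=jq_{n_k}+s$, control $S_s$ by the Ostrowski/Denjoy--Koksma bound, and show $S_{jq_{n_k}}(\alpha,x)\approx j\,S_{q_{n_k}}(\alpha,x)$ on the event that $|S_{q_{n_k}}(\alpha,x)|$ is bounded below---is exactly the paper's. Your facts~(a) and~(b) are the contents of Lemmas~\ref{Lem-max-S-r} and~\ref{LmDKV1}, and your affine estimate for $\sum_{i<j}g(x+i\gamma)$ is the paper's Lemma~\ref{Lem-mu-n}.

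The genuine gap is fact~(c). First, obtaining $\mes(\limsup_k G_k)>0$ needs no quasi-independence whatsoever: since $\mes(G_k)\ge\rho$ for all $k$, continuity from above already gives $\mes\bigl(\bigcap_n\bigcup_{k\ge n}G_k\bigr)\ge\rho$. The real issue is the upgrade to full measure, and here your argument is not a proof. The appeal to ``exactly as in Claim~3 of the proof of Lemma~\ref{LmGA}'' is unjustified: that claim establishes quasi-independence of diophantine events $\cA_k(I)$ in the $\alpha$-variable via continued-fraction cylinder structure and the Gibbs bound~\eqref{Gibbs}; there is no analogous mechanism for the level sets $G_k=\{|S_{q_{n_k}}(\alpha,\cdot)|\ge c_0/\sqrt2\}$ in the $x$-variable. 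Likewise, the density-point step at the end of Lemma~\ref{LmGA} works because the whole construction localizes to an arbitrary interval $I$; you have not shown that $\mes(G_k\cap I)\gtrsim\mes(I)$ uniformly, nor that the $G_k\cap I$ are quasi-independent, and neither is obvious (indeed when $\sum b_m=0$ the function $g$ is piecewise constant and the fine structure of $G_k$ depends delicately on the $\beta_m$).

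The paper sidesteps this entirely with an observation you overlooked: the set $\Omega^*(\alpha)$ of $x$ for which the conclusion holds is $R_\alpha$-invariant (because $S_n(\alpha,x+\alpha)=S_{n+1}(\alpha,x)-f(x)$, a bounded perturbation absorbed by $A_k$) and measurable (appendix). Ergodicity of $R_\alpha$ then reduces the task to showing $\mes(\Omega^*(\alpha))>0$. Your own argument already delivers this: the discontinuity-avoidance condition holds for a.e.\ $x$ and all large $k$ by Borel--Cantelli, while $\mes(\limsup_k G_k)\ge\rho>0$ trivially; on the intersection, your estimates place $x$ in $\Omega^*(\alpha)$. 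Replace the attempted proof of~(c) by this invariance/ergodicity step and the proof is complete and essentially identical to the paper's.
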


\begin{thm}
\label{ThNonUniform}
Suppose $\alpha\in \cA$,  then
for a.e. $x\in [0,1)$, there are no
$A_N(x)\in \R$ and  $B_N(x)\to\infty$  such that
$$
\frac{S_n(\alpha,x)-A_N(x)}{B_N(x)}\xrightarrow[N\to\infty]{\text{dist}}\mathrm{U}[0,1],\text{ as }n\sim \text{U}(0,\ldots,N).
$$
\end{thm}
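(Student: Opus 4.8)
The plan is to argue by contradiction and to reduce the whole statement to the convergence of types theorem. Fix $\alpha\in\cA=\cA(\cN,M)$ and an $x$ lying in the full–measure set on which the analysis of \S\ref{ScResII} applies, and suppose, for contradiction, that there were $A_N=A_N(x)\in\R$ and $B_N=B_N(x)\to\infty$ with $(S_n(\alpha,x)-A_N)/B_N$ converging in distribution to $\mathrm{U}[0,1]$ as $n\sim\mathrm{U}(0,\ldots,N)$. Recall the convergence of types theorem: if $\xi_k\Rightarrow\xi$ and $c_k\xi_k+d_k\Rightarrow\eta$ with $\xi,\eta$ non-degenerate, then $c_k\to c\neq 0$, $d_k\to d$, and $\eta$ is the affine image $c\xi+d$. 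Consequently, to reach a contradiction it suffices to exhibit \emph{one} sequence of scales $\tilde N_k\to\infty$ and renormalising constants $\tilde A_k\in\R$, $\tilde B_k\to\infty$ for which the temporal distributions $(S_n(\alpha,x)-\tilde A_k)/\tilde B_k$, $n\sim\mathrm{U}(0,\ldots,\tilde N_k)$, converge to a non-degenerate law $\mu$ that is not an affine image of $\mathrm{U}[0,1]$: applying convergence of types to this limit and to the assumed $\mathrm{U}[0,1]$–limit along the \emph{same} $\tilde N_k$ forces $\mu$ to be affinely uniform, a contradiction. Since $\cA$ has full measure by Theorem \ref{LmLargePC}, this shows that for almost every $(\alpha,x)$ no TDLT with uniform limit exists.

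The scales $\tilde N_k$ will be read off from the giant partial quotients carried by $\alpha\in\cA$. Choose $n_k\uparrow\infty$ and $r_k\le M$ realising \eqref{OddDenom}, so $r_kq_{n_k}\in\cN$ and $a_{n_k+1}/(a_0+\cdots+a_{n_k})\to\infty$, and let $\tilde N_k$ run through (a suitable fraction of) the $k$-th ``giant block'' $\{q_{n_k},\ldots,q_{n_k+1}\}$. For $n=jq_{n_k}+r$ with $0\le j<a_{n_k+1}$, $0\le r<q_{n_k}$, decompose
\[
S_n(\alpha,x)=\Phi_k(j)+S_r(\alpha,x+jq_{n_k}\alpha),\qquad \Phi_k(j):=\sum_{i=0}^{j-1}S_{q_{n_k}}(\alpha,x+iq_{n_k}\alpha).
\]
By Denjoy--Koksma $S_{q_{n_k}}$ is bounded, and $|S_r(\alpha,x+jq_{n_k}\alpha)|\le\const\,(a_0+\cdots+a_{n_k})$ since $r<q_{n_k}$; by \eqref{OddDenom} this last quantity is $o(a_{n_k+1})$, so the remainder is negligible at the scale of $\Phi_k$. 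The profile $j\mapsto\Phi_k(j)$ is governed by the three–distance structure of $\{x+iq_{n_k}\alpha:i<a_{n_k+1}\}$ — which sweeps an arc of length $\asymp q_{n_k}^{-1}$ in steps $\|q_{n_k}\alpha\|\asymp q_{n_k+1}^{-1}$ — together with the fact that $y\mapsto S_{q_{n_k}}(\alpha,y)$ is a piecewise affine sawtooth of period $\asymp q_{n_k}^{-1}$; summing one period of such a sawtooth produces a \emph{non-affine} profile (one finds $\Phi_k(j)\approx\tfrac{\kappa_k}{2}j(j-a_{n_k+1})$, an approximate parabola). Its amplitude is $\asymp a_{n_k+1}\cdot D(\beta_1r_kq_{n_k},\ldots,\beta_dr_kq_{n_k})$, and this is $\ge\eps_0 a_{n_k+1}$ up to a constant \emph{precisely because} $r_kq_{n_k}\in\cN$ (see \eqref{D-function} and \eqref{RelBigFC}); so $\tilde B_k\asymp a_{n_k+1}\to\infty$. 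Sampling $n\sim\mathrm{U}(0,\ldots,\tilde N_k)$ thus amounts, up to $o(a_{n_k+1})$ errors, to sampling $j$ uniformly and reading off $\Phi_k(j)$, so the temporal distributions converge to the occupation measure of a fixed non-affine curve — in the parabolic model, the push-forward of $\mathrm{U}[0,1]$ under a quadratic map, a law with an $\propto(\cdot)^{-1/2}$ density on a bounded interval. Such a $\mu$ is not affinely uniform, which is all that is required. (If the block profile turns out to be piecewise linear rather than parabolic, one instead chooses $\tilde N_k$ to cover a strict fraction of the block, whose occupation measure is still non-uniform.)

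The convergence of types reduction and the control of the $o(a_{n_k+1})$ error terms (Denjoy--Koksma together with the elementary bound $\max_{n\le q_m}|S_n|\ll a_0+\cdots+a_m$) are routine. The main obstacle — this is the substance of \S\ref{ScResII}, and the same analysis underlies Theorem \ref{ThUniform} — is the structural step: proving that for \emph{almost every} $x$ the giant–block profile $\Phi_k$ genuinely converges, after the correct affine renormalisation, to a fixed non-affine curve, with amplitude bounded below in terms of $\cN$. Carrying this out requires renormalising the Birkhoff sums of $h$ along the continued fraction algorithm, identifying the limiting shape of one period of the sawtooth $S_{r_kq_{n_k}}$, and a genericity argument controlling the position of the orbit $\{x+iq_{n_k}\alpha\}$ relative to the discontinuities of that sawtooth; making all of these estimates uniform enough to pass to the temporal distributional limit is where the real work lies.
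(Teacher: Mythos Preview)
Your convergence-of-types reduction is sound, and the paper uses the same device implicitly at the end. The gap is in how you manufacture the non-uniform subsequential limit. You try to analyse the profile $\Phi_k(j)$ over the \emph{full} giant block $0\le j<a_{n_k+1}$ and claim its occupation measure is non-uniform, but this is not established. The parabolic formula rests on $S_{q_{n_k}}'=q_{n_k}\sum_m b_m$ on continuity intervals; when $\sum_m b_m=0$ this vanishes, $S_{q_{n_k}}$ is piecewise constant, and $\Phi_k$ is piecewise \emph{linear}. Your parenthetical cure (``take a strict fraction of the block'') does not help: restricting a linear profile to a subinterval keeps it linear, with uniform occupation measure, and a piecewise-linear tent profile also has uniform occupation measure. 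Even when $\sum_m b_m\ne0$, the orbit $x+iq_{n_k}\alpha$ sweeps roughly one full period of the sawtooth $S_{q_{n_k}}$ and crosses about $d$ of its discontinuities, yielding a piecewise-parabolic profile whose break points depend on $x$ and $k$; that its occupation measure is non-uniform, and that the rescaled profiles converge at all, requires an argument you have not supplied. Finally, the amplitude is not $a_{n_k+1}D(\cdot)$: the role of $r_kq_{n_k}\in\cN$ is only to guarantee, via Lemma~\ref{LmDKV1}, that $|\mu_{n_k}(x)|\ge\eps_1$ on a set of \emph{positive} (not full) measure.

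The paper avoids the full-block analysis by a self-referential trick you have missed. Assuming $\Omega(\alpha)$ has positive, hence full, measure, the hypothesis already gives $(S_r(\alpha,x)-A_{q_{n_k}})/B_{q_{n_k}}\Rightarrow\mathrm{U}[0,1]$ as $r\sim\mathrm{U}(0,\ldots,q_{n_k}-1)$. Lemma~\ref{Lem-max-S-r} forces $B_{q_{n_k}}(x)\le 3C_3L_k=o(a_{n_k+1})$. One then works at the much smaller scale $N_k:=B_{q_{n_k}}(x)\,q_{n_k}$, deep \emph{inside} the giant block: writing $n=\ell q_{n_k}+r$ with $0\le\ell<B_{q_{n_k}}$, the bound $\ell=O(L_k)$ keeps the orbit in one continuity interval (Lemmas~\ref{LmLip} and \ref{Lem-mu-n}), so $S_n=\ell\mu_{n_k}(x)+S_r+o(1)$ with $\ell,r$ independent. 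Along a subsequence with $\mu_{n_k}(x)\to\bar\eps>0$ the limit is the convolution $\mathrm{U}[0,\bar\eps]\ast\mathrm{U}[0,1]$, which is not uniform. No occupation-measure computation is needed: the uniform law for the $S_r$ piece is supplied by the very hypothesis being contradicted. Note in particular that ``the same analysis'' does \emph{not} underlie Theorem~\ref{ThUniform}; that theorem produces a \emph{uniform} limit at a small scale, which is the opposite of what your programme requires.
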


\subsection{Preliminaries}

\begin{lem}\label{Lem-Sing-dist}
$S_{q}(\alpha,\cdot):\T\to\R$ has $d q$ discontinuities.
\end{lem}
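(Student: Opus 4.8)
The plan is to write the Birkhoff sum explicitly, locate its possible jump points, and verify that there are exactly $dq$ of them. Expanding,
$$
S_q(\alpha,x)=\sum_{m=1}^d b_m\sum_{k=0}^{q-1}h(x+k\alpha+\beta_m),\qquad h(t)=\{t\}-\tfrac12 .
$$
On $\T$ the function $h$ is affine on $(0,1)$ and has a single discontinuity, at $0$, with $h(0^+)-h(0^-)=-\tfrac12-\tfrac12=-1$. Hence the summand $x\mapsto h(x+k\alpha+\beta_m)$ is smooth except at the single point $x_{m,k}:=-k\alpha-\beta_m\bmod 1$, where it jumps by $-1$. Consequently $S_q(\alpha,\cdot)$ is smooth off the finite set $\cD:=\{x_{m,k}:0\le k\le q-1,\ 1\le m\le d\}$, and $\#\cD\le dq$.

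Next I would show the $dq$ points $x_{m,k}$ are pairwise distinct, so $\#\cD=dq$. If $m=m'$ and $k\ne k'$, then $x_{m,k}=x_{m,k'}$ would force $(k-k')\alpha\in\Z$, impossible since $\alpha$ is irrational and $0<|k-k'|<q$. If $m\ne m'$, then $x_{m,k}=x_{m',k'}$ forces $(k-k')\alpha\equiv\beta_{m'}-\beta_m\pmod 1$, with $0<|k-k'|<q$ (the case $k=k'$ is excluded by $\beta_m\ne\beta_{m'}$). This resonance holds only for $\alpha$ in the countable, hence Lebesgue-null, set $\bigcup_{j\in\Z\setminus\{0\}}\bigcup_{m\ne m'}\{\alpha:j\alpha\equiv\beta_{m'}-\beta_m\bmod 1\}$; since $\cN$, $M$, and therefore $\cA=\cA(\cN,M)$ depend only on $(b_i,\beta_i)$, we may intersect $\cA$ with the complement of this null set without affecting the "full measure" conclusions of Theorems~\ref{ThUniform} and~\ref{ThNonUniform}. (Equivalently, one adds this nonresonance to the standing hypotheses on $\alpha$.)

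It then remains to check that each point of $\cD$ is a genuine discontinuity. At $x_0=x_{m_0,k_0}\in\cD$, exactly one summand, namely $b_{m_0}h(x+k_0\alpha+\beta_{m_0})$, is discontinuous: every other summand $b_mh(x+k\alpha+\beta_m)$ with $(m,k)\ne(m_0,k_0)$ is smooth at $x_0$, because $x_{m,k}\ne x_0$ by the previous step. Hence $S_q(\alpha,\cdot)$ has jump $-b_{m_0}\ne 0$ at $x_0$, and so $S_q(\alpha,\cdot)$ is discontinuous at precisely the $dq$ points of $\cD$.

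The only nonroutine ingredient is the distinctness of the $x_{m,k}$ for distinct indices $m$; this is the sole place where a property of $\alpha$ beyond irrationality enters, and once it is available the jump computation follows immediately from the piecewise-linear structure of $h$.
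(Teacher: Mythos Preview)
Your argument is correct and follows the same idea as the paper: the potential discontinuities of $S_q(\alpha,\cdot)$ are exactly the $R_\alpha^{-k}$-preimages of the $d$ discontinuities of $f$, for $k=0,\ldots,q-1$. The paper's proof is a single sentence recording this observation and nothing more.

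You go further than the paper in two ways. First, you verify that the $dq$ candidate points are pairwise distinct, and second, you check that the jump at each one is nonzero. The paper does neither. Your observation that distinctness across different indices $m\neq m'$ requires excluding the countable null set of $\alpha$ satisfying some resonance $(k-k')\alpha\equiv\beta_{m'}-\beta_m$ is accurate: as literally stated, the lemma fails for such $\alpha$ (e.g.\ if $\alpha=\beta_2-\beta_1$ there are coincidences, and if moreover $b_1+b_2=0$ the jump there cancels). Your proposed fix---absorbing this null set into the almost-everywhere hypothesis on $\alpha$---is the right one.

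That said, if you inspect how the lemma is used (in Lemma~\ref{Lem-mu-n} and in Claim~2 of \S\ref{SSNUAEx}), you will see that only the upper bound ``at most $dq$ discontinuities'' is ever invoked, to bound the measure of a union of balls around singularities. So the paper's one-line argument, read as giving $\leq dq$, suffices for all downstream applications, and your extra nonresonance hypothesis is not needed for the main theorems. Your more careful version is a genuine strengthening, but for the purposes of the paper the cruder bound is enough.
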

\begin{proof}
The discontinuities of $S_q$ are preimages of discontinuities of $f$ by $R_\alpha^{-k}$
with $k=0,1,\ldots,q-1.$
\end{proof}

\begin{lem}
\label{LmLip}
Let $C:=\sup|f'|\leq |\sum b_m|$.
If $x', x''$ belong to same continuity component of $R_\alpha^r$ then
$$ |S_r(\alpha, x')-S_r(\alpha, x'')|\leq Cr |x'-x''|.$$
\end{lem}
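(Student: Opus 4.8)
The plan is to exploit the explicit piecewise-linear structure of $f=\sum_{m=1}^d b_m h(\cdot+\beta_m)$ together with the description of the discontinuity set of $S_r(\alpha,\cdot)$ coming from Lemma~\ref{Lem-Sing-dist}, and then reduce the estimate to applying the mean value theorem to each of the $r$ summands of $S_r$ separately. The point is that on any arc avoiding the discontinuities of $f$, the function $f$ is affine with slope $\sum_m b_m$, so each term is genuinely Lipschitz with constant $C$; the hypothesis that $x'$ and $x''$ lie in a common continuity component of $S_r$ is exactly what guarantees that each translate $x+k\alpha$, $x$ ranging over the arc joining $x'$ to $x''$, stays within one continuity arc of $f$.

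First I would record that $h(x)=\{x\}-\frac12$ is $C^1$ on $\Tor\setminus\{0\}$ with $h'\equiv 1$, so $f$ is $C^1$ on $\Tor\setminus\{-\beta_1,\dots,-\beta_d\}$ with $f'\equiv\sum_{m=1}^d b_m$; hence $C=\sup|f'|=\bigl|\sum_m b_m\bigr|$, which in particular gives the inequality $C\le|\sum b_m|$ in the statement. Next I would recall, as in the proof of Lemma~\ref{Lem-Sing-dist}, that the discontinuities of $S_r(\alpha,\cdot)=\sum_{k=0}^{r-1}f(\cdot+k\alpha)$ form a subset of $\bigcup_{k=0}^{r-1}R_\alpha^{-k}\{-\beta_1,\dots,-\beta_d\}$; thus a continuity component $J$ of $S_r(\alpha,\cdot)$ is an open arc disjoint from all these preimages, so for every $k\in\{0,\dots,r-1\}$ the translated arc $J+k\alpha$ contains no discontinuity of $f$ and therefore lies inside a single arc on which $f$ is $C^1$ with $|f'|\le C$. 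Finally, for $x',x''$ in a common continuity component $J$, let $[x',x'']$ denote the sub-arc of $J$ joining them, of length $|x'-x''|$; since $R_\alpha^k$ is an isometry of $\Tor$ and $R_\alpha^k([x',x''])$ sits in a continuity arc of $f$, the mean value theorem yields $|f(x'+k\alpha)-f(x''+k\alpha)|\le C|x'-x''|$, and summing over $k=0,\dots,r-1$ with the triangle inequality gives $|S_r(\alpha,x')-S_r(\alpha,x'')|\le Cr|x'-x''|$.

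The only delicate point — and it is minor — is the bookkeeping on the circle: one must interpret $|x'-x''|$ as the length of the arc joining $x'$ to $x''$ inside the continuity component (so that "between $x'$ and $x''$" is unambiguous), and one must check that translating an entire continuity component of $S_r$ by $k\alpha$ lands strictly between two consecutive discontinuities of $f$, which is immediate from the containment of $\operatorname{disc}(S_r)$ in $\bigcup_{k}R_\alpha^{-k}\{-\beta_m\}$. I do not expect any genuine obstacle beyond this.
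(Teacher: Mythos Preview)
Your proposal is correct and follows essentially the same approach as the paper: both arguments rest on the fact that on each continuity component $|S_r'|\le Cr$, whence the Lipschitz bound. The paper states this in one line by bounding $\bigl|\sum_{k=0}^{r-1}f'(x+k\alpha)\bigr|\le Cr$ directly, while you unpack the same estimate term by term via the mean value theorem; the content is identical.
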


\begin{proof}
Since
$|S_r'|=\left|\sum_{k=0}^{r-1} f'(x+k\alpha)\right|\leq Cr,$ the restriction of $S_r$ to
on each continuity component
is Lipshitz with Lipshitz constant $C r.$
\end{proof}

\begin{lem}\label{Lem-mu-n}
There are constants $C_1, C_2$ such that the following holds.
Suppose that $q_n$ is a principal denominator of $\alpha$, and $q_{n+1}>c q_n$ with $c>1$.
Let $\mu_n(x):=S_{q_n}(\alpha,x)$, then
\begin{equation}
\label{ChangeInc}
\mes\left\{x:\;S_{\ell q_n}(\alpha,x)=\ell\mu_n\pm C_1 \frac{\ell^2}{c} \text{ for }\ell=0,\ldots, k
\right\}
>1-C_2\frac{k}{c}.
\end{equation}
\end{lem}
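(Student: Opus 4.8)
The plan is to exploit the near-periodicity of $R_\alpha^{q_n}$. Writing $\eta:=q_n\alpha-p_n$, the hypothesis $q_{n+1}>cq_n$ gives $|\eta|=|q_n\alpha-p_n|<1/q_{n+1}<1/(cq_n)$, and $x+jq_n\alpha\equiv x+j\eta\pmod 1$. Splitting an orbit of length $\ell q_n$ into $\ell$ consecutive blocks of length $q_n$, and using $\mu_n(x)=S_{q_n}(\alpha,x)$ together with the $1$-periodicity of $f$, yields the exact identity
\[
S_{\ell q_n}(\alpha,x)=\sum_{j=0}^{\ell-1}S_{q_n}(\alpha,x+jq_n\alpha)=\ell\mu_n(x)+\sum_{j=0}^{\ell-1}\bigl(S_{q_n}(\alpha,x+j\eta)-\mu_n(x)\bigr).
\]
Thus it suffices to show that for most $x$ one has $|S_{q_n}(\alpha,x+j\eta)-\mu_n(x)|\le Cj/c$ for all $0\le j\le k-1$, where $C:=\sup|f'|$.

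I would use two facts about $S_{q_n}(\alpha,\cdot)$: it has at most $dq_n$ discontinuities (Lemma \ref{Lem-Sing-dist}), and it is Lipschitz with constant $Cq_n$ on each of its continuity components (Lemma \ref{LmLip}). Let $D$ be its discontinuity set, $\Card(D)\le dq_n$. The key is to control a \emph{single} arc instead of $k$ separate ones: set $J(x):=\{x+t\eta:0\le t\le k-1\}\bmod 1$. If $(k-1)|\eta|\ge1$ then $k>cq_n$, so $C_2k/c>1$ as soon as $C_2\ge d\ge1$ and the right-hand side of \eqref{ChangeInc} is negative; hence we may assume $(k-1)|\eta|<1$, in which case $J(x)$ is an arc of length $(k-1)|\eta|<k/(cq_n)$. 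Let $B:=\{x\in\T:J(x)\cap D\ne\emptyset\}$. For each point $w\in D$ the set $\{x:w\in J(x)\}$ is an arc of length $(k-1)|\eta|$, so $\mes(B)\le\Card(D)\,(k-1)|\eta|\le dq_n\cdot\frac{k}{cq_n}=\frac{dk}{c}$.

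Now, for $x\notin B$ the arc $J(x)$ is connected and disjoint from $D$, hence contained in one continuity component of $S_{q_n}(\alpha,\cdot)$; in particular, for $0\le j\le k-1$ the points $x$ and $x+j\eta=x+jq_n\alpha\bmod 1$ lie in that component and are joined inside it by a sub-arc of length $j|\eta|$, so Lemma \ref{LmLip} gives $|S_{q_n}(\alpha,x+j\eta)-\mu_n(x)|\le Cq_n\cdot j|\eta|\le Cj/c$. Substituting into the identity above, for every $\ell\in\{0,\dots,k\}$ and every $x\notin B$,
\[
\bigl|S_{\ell q_n}(\alpha,x)-\ell\mu_n(x)\bigr|\le\sum_{j=0}^{\ell-1}\frac{Cj}{c}\le\frac{C\ell^2}{2c}.
\]
Hence the set in \eqref{ChangeInc} contains $\T\setminus B$, which has measure $>1-dk/c$, and one may take $C_1=C/2$ and $C_2=d$.

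The argument is essentially routine; the only point that genuinely needs care — and the reason the exceptional probability is linear rather than quadratic in $k$ — is to estimate $\mes(B)$ via the single long arc $J(x)$ of length $<k/(cq_n)$ rather than summing the measures of the $k$ separate bad events, together with the observation that the wrap-around regime $(k-1)|\eta|\ge1$ is vacuous.
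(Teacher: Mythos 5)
Your proof is correct and takes essentially the same route as the paper's: both decompose $S_{\ell q_n}$ into $\ell$ blocks of length $q_n$, bound the drift via the Lipschitz estimate (Lemma \ref{LmLip}) on continuity components of $S_{q_n}(\alpha,\cdot)$, count the $\le dq_n$ discontinuities (Lemma \ref{Lem-Sing-dist}), and estimate the exceptional set as a union of small arcs around them. Your version is marginally cleaner in using the one-sided arc $J(x)$ rather than a two-sided ball (which shaves the paper's constant $C_2=2d$ down to $d$) and in explicitly dismissing the wrap-around regime, but these are cosmetic differences, not a different method.
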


\begin{proof}
If $x$ and  $x+\ell q_n \alpha$ belong to the same continuity interval of $R_\alpha^{q_n}$
for all $\ell=0,\ldots,k$
then we have by Lemma \ref{LmLip} { that for $\ell\leq k$}
\begin{align*}
&|S_{\ell q_n}(\alpha,x)-\ell\mu_n|\leq \sum_{j=0}^{\ell-1}|S_{q_n}(\alpha,x+ jq_n\alpha)-S_{q_n}(\alpha,x)|\leq  Cq_n \sum_{j=0}^{\ell-1}\|j q_n\alpha\|\\
&\leq \frac{Cq_n}{q_{n+1}}\sum_{j=0}^{\ell-1} j \leq \frac{C_1\ell^2}{c},\text{ where }C_1:=C/2.
\end{align*}
Therefore if $S_{\ell q_n}(\alpha,x)\neq \ell\mu_n\pm C_1 \frac{\ell^2}{c}$  for some $\ell=0,\ldots, k$, then there must exist $0\leq \ell\leq k$ s.t. $x, R_\alpha^{\ell q_n}(x)$ are separated by a discontinuity of $S_{q_n}(\alpha,\cdot)$. Since $\dist(x,R_\alpha^{\ell q_n}(x))\leq \ell/q_{n+1}$, $x$ must belong to a ball with radius $k/q_{n+1}$ centered at a discontinuity of $S_{q_n}(\alpha,\cdot)$.
By Lemma \ref{Lem-Sing-dist}, there are $d q_n$ discontinuities, so the measure of such points
is less than $d q_n \left(\frac{2k}{q_{n+1}}\right)\leq \frac{2dk}{c}$.  The lemma follows with $C_2:=2d$.
\end{proof}

\begin{lem}\label{Lem-max-S-r} There is a constant $C_3=C_3(b_1,\ldots,b_d)$ s.t. for every

\noindent
$\alpha=[0;a_1,a_2,\ldots]$,
$\max\{|S_r(\alpha,x)|\!: 0\leq r\leq q_n-1\}\leq C_3 (a_0+\cdots+a_{n-1}).$
\end{lem}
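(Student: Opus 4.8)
The tool is the \emph{Denjoy--Koksma inequality} combined with the Ostrowski ($\alpha$-adic) representation of integers. First note that $f=\sum_{m=1}^{d} b_m h(\cdot+\beta_m)$, with $h(x)=\{x\}-\tfrac12$, has bounded variation on $\Tor$, with $\Var(f)\le\sum_{m=1}^{d}|b_m|\,\Var(h)=2\sum_{m=1}^{d}|b_m|=:C_3$, and $\int_{\Tor}f=0$. Hence the Denjoy--Koksma inequality gives, for every principal denominator $q_j=q_j(\alpha)$ and every $y\in\Tor$,
\[
\Bigl|\,\sum_{k=0}^{q_j-1} f(y+k\alpha)\,\Bigr|\ \le\ \Var(f)\ \le\ C_3 .
\]
(For $j=0$ this is just the elementary bound $|f(y)|\le\Var(f)$, valid for any bounded-variation function of zero mean, since then $\inf f\le 0\le\sup f$ and $|f(y)|\le \sup f-\inf f\le\Var(f)$.) In words: every ergodic sum of $f$ of length equal to a continued-fraction denominator is bounded by a constant depending only on $b_1,\dots,b_d$.

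Now fix $x\in\Tor$ and $0\le r\le q_n-1$, and write the Ostrowski expansion $r=\sum_{j=1}^{n}\eps_j\,q_{j-1}$, where $0\le\eps_1\le a_1-1$ and $0\le\eps_j\le a_j$ for $2\le j\le n$. Partition the block of time–indices $\{0,1,\dots,r-1\}$ greedily into consecutive sub-blocks: $\eps_n$ sub-blocks of length $q_{n-1}$, then $\eps_{n-1}$ sub-blocks of length $q_{n-2}$, and so on down to $\eps_1$ sub-blocks of length $q_0=1$. If a sub-block consists of the indices $\{a,a+1,\dots,a+q_{j-1}-1\}$, its contribution to $S_r(\alpha,x)$ equals $\sum_{k=a}^{a+q_{j-1}-1} f(x+k\alpha)=\sum_{i=0}^{q_{j-1}-1} f\bigl((x+a\alpha)+i\alpha\bigr)$, which by the previous paragraph has modulus at most $C_3$. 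Summing over the $\sum_{j=1}^{n}\eps_j$ sub-blocks,
\[
|S_r(\alpha,x)|\ \le\ C_3\sum_{j=1}^{n}\eps_j\ \le\ C_3\bigl((a_1-1)+a_2+\cdots+a_n\bigr)\ \le\ C_3\,(a_0+a_1+\cdots+a_n),
\]
since $a_0=0$. Taking the maximum over $0\le r\le q_n-1$ yields the lemma, with $C_3=2\sum_{m=1}^{d}|b_m|$.

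The only substantial ingredient is the Denjoy--Koksma cancellation at the scales $q_j$: without it one would only have the trivial bound $\|f\|_\infty\,q_n$, so this is where the zero-mean and bounded-variation hypotheses are used, and it is the step I would regard as the heart of the argument; the rest is routine Ostrowski bookkeeping and I do not foresee a genuine obstacle. (If one wished to avoid quoting Denjoy--Koksma, the estimate $|S_{q_j}(\alpha,\cdot)|\le\mathrm{const}$ can also be obtained directly for $h$ from the three-distance theorem, but this is more work than the one-line quotation above.) It is worth noting \emph{why} the bound is permitted to grow with the partial quotients: when some $a_j$ is large, the greedy expansion of an $r<q_n$ may use as many as $\sim a_j$ consecutive length-$q_{j-1}$ sub-blocks, and the corresponding length-$q_{j-1}$ sums need not cancel one another — this absence of cancellation on intermediate scales is precisely the mechanism exploited in the sequel to produce the different scaling behaviours of the temporal distributions along different subsequences.
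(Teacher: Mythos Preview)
Your argument is essentially identical to the paper's: write $r$ in its Ostrowski expansion, split $S_r$ into consecutive blocks of lengths $q_j$, bound each block by $\Var(f)$ via Denjoy--Koksma, and sum the digits. The only discrepancy is that you (correctly) arrive at $C_3(a_0+\cdots+a_n)$ rather than $C_3(a_0+\cdots+a_{n-1})$; the Ostrowski digit on $q_{n-1}$ can indeed reach $a_n$, so this is an index slip in the statement, and the applications in the paper only ever use the bound with $L_k=a_0+\cdots+a_{n_k}$, so nothing downstream is affected.
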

\begin{proof}
Let $r=\sum_{j=0}^{n-1} \fb_j q_j$ denote the Ostrowski expansion of $r$. Recall that this means that
$0\leq \fb_j\leq a_j$ and $\fb_j=a_j\Rightarrow \fb_{j-1}=0$.
So
$$
S_r=\sum_{k=0}^{\fb_{n-1}-1}S_{q_{n-1}}\circ R_\alpha^{q_{n-1}k}+
\sum_{k=0}^{\fb_{n-2}-1}S_{q_{n-2}}\circ R_\alpha^{q_{n-2}k}+\cdots+
\sum_{k=0}^{\fb_{0}-1}S_{q_{0}}\circ R_\alpha^{q_{0}k}\;.
$$
By the Denjoy-Koksma inequality
$
|S_r|\leq \sum \fb_j\mathsf{V}(f)\leq \mathsf V(f)\sum a_j
$ where $\mathsf V(f)\leq 2\sum b_i$ is the total variation of $f$ on $\Tor$.
\end{proof}

\begin{lem}
\label{LmDKV1}
There exist positive constants $\eps_1, \eps_2$ such that for every $\alpha$ irrational, if
$q_n$ is a principal denominator of $\alpha$ and $q_n r_n\in \cN$ with $r_n\leq M$
then
$\mes\{x:\; |S_{q_n}(\alpha,x)|\geq \eps_1\}\geq \eps_2 .$
\end{lem}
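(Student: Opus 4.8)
My plan is to reduce the statement to an $L^2$-norm lower bound and then extract a set of positive measure via Chebyshev's inequality. Since $\int_\Tor f=0$, the Denjoy--Koksma inequality (used already in the proof of Lemma~\ref{Lem-max-S-r}) gives the \emph{uniform} bound $\|S_{q_n}(\alpha,\cdot)\|_\infty\le \mathsf V(f)=:V$ over the full period $q_n$. Hence it suffices to exhibit a constant $c=c(\eps_0,M,\{b_m\},\{\beta_m\})>0$, independent of $\alpha$ and of $n$, with $\|S_{q_n}(\alpha,\cdot)\|_{L^2(\Tor)}^2\ge c$: granting this, $c\le \eps_1^2+V^2\,\mes\{x:|S_{q_n}(\alpha,x)|\ge\eps_1\}$, and $\eps_1:=\sqrt{c/2}$, $\eps_2:=c/(2V^2)$ satisfy the lemma.

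To bound $\|S_{q_n}(\alpha,\cdot)\|_2^2$ from below I would use Parseval. With $f=\sum_{m=1}^d b_m h(\cdot+\beta_m)$ and $\widehat h(j)=-\tfrac1{2\pi i j}$, the $j$-th Fourier coefficient of $S_{q_n}(\alpha,\cdot)=\sum_{k=0}^{q_n-1}f(\cdot+k\alpha)$ is $\widehat f(j)\sum_{k=0}^{q_n-1}e^{2\pi i jk\alpha}$ with $|\widehat f(j)|^2=\tfrac1{4\pi^2 j^2}\bigl|\sum_m b_m e^{2\pi i j\beta_m}\bigr|^2$; and the elementary identity $\bigl|\sum_m b_m e^{2\pi i\gamma_m}\bigr|^2=2D(\gamma_1,\dots,\gamma_d)$, the same real-part computation that underlies the displayed formula for $\|f\|_2^2$, gives
$$\|S_{q_n}(\alpha,\cdot)\|_2^2=\frac1{2\pi^2}\sum_{j\ne 0}\frac{D(j\beta_1,\dots,j\beta_d)}{j^2}\left|\frac{\sin\pi j q_n\alpha}{\sin\pi j\alpha}\right|^2 .$$
I would then discard all terms but $j=r_nq_n$. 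Since $r_nq_n\in\cN$, definition~\eqref{RelBigFC} gives $D(r_nq_n\beta_1,\dots,r_nq_n\beta_d)>\eps_0$, so it remains only to bound the Dirichlet factor at this frequency below by a multiple of $q_n$. Writing $q_n\alpha=p_n+\theta_n$ with $|\theta_n|=\|q_n\alpha\|$, and using $r_np_n,\,r_nq_np_n\in\Z$, one has
$$\left|\frac{\sin\pi r_n q_n^2\alpha}{\sin\pi r_n q_n\alpha}\right|=\left|\frac{\sin\pi q_n r_n\theta_n}{\sin\pi r_n\theta_n}\right|=\left|\sum_{k=0}^{q_n-1}e^{2\pi i k r_n\theta_n}\right| ,$$
which, using the classical bounds $\tfrac1{q_n+q_{n+1}}<\|q_n\alpha\|<\tfrac1{q_{n+1}}$ and $r_n\le M$, can be shown to be $\ge c'(M)\,q_n$. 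The retained term is then $\ge\frac{\eps_0}{2\pi^2(r_nq_n)^2}\bigl(c'(M)q_n\bigr)^2=\frac{c'(M)^2\eps_0}{2\pi^2 r_n^2}\ge\frac{c'(M)^2\eps_0}{2\pi^2 M^2}=:c$, as needed.

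The step I expect to be the main obstacle is the last estimate: one must show that this \emph{single} Fourier coefficient really is of order $q_n$, i.e.\ that it is not accidentally killed by a near-resonance of $q_n r_n\theta_n$ with a nonzero integer --- equivalently, that the $d$ shifted saw-tooth profiles $\sum_{k=0}^{q_n-1}h(\cdot+\beta_m+k\alpha)$ composing $S_{q_n}(\alpha,\cdot)$ do not cancel, which is exactly what the hypothesis $r_nq_n\in\cN$ is meant to prevent. The estimate is immediate when $q_{n+1}/q_n$ is large (then $q_n r_n\|q_n\alpha\|\le Mq_n/q_{n+1}\to 0$, so the Dirichlet kernel sits near its peak $q_n$), which is precisely the regime in which the lemma is applied in Theorems~\ref{ThUniform}--\ref{ThNonUniform}; the bounded range of $q_{n+1}/q_n$ costs a short additional continued-fraction estimate. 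Everything else --- Parseval, the identity $\bigl|\sum b_m e^{2\pi i\gamma_m}\bigr|^2=2D$, Denjoy--Koksma, Chebyshev --- is routine.
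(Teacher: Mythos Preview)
Your approach is essentially the paper's: isolate the single Fourier mode $j=r_nq_n$, bound the Dirichlet factor from below, and pass from an $L^2$ lower bound to a measure lower bound via the Denjoy--Koksma $L^\infty$ estimate and Chebyshev. The only cosmetic difference is that the paper first controls $\|S_{r_nq_n}(\alpha,\cdot)\|_2$ (same mode $j=N=r_nq_n$) and then transfers to $S_{q_n}$ through the pointwise inequality $|S_{r_nq_n}(\alpha,x)|\le\sum_{k=0}^{r_n-1}|S_{q_n}(\alpha,x+kq_n\alpha)|$, whereas you bound $\|S_{q_n}(\alpha,\cdot)\|_2$ directly; your route saves that last reduction step.

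One caveat: your throwaway remark that the bounded range of $q_{n+1}/q_n$ ``costs a short additional continued-fraction estimate'' is not justified --- at $j=r_nq_n$ the numerator $\sin(\pi q_nr_n\theta_n)$ can genuinely be small when $r_nq_n\|q_n\alpha\|$ nears a nonzero integer, so the single-mode lower bound really does need $a_{n+1}$ large. The paper's proof has exactly the same feature (its estimate $\|N\alpha\|\le M^2/(a_{n+1}N)=o(1/N)$ tacitly uses $a_{n+1}\to\infty$), and since the lemma is only ever invoked along the subsequence coming from $\cA(\cN,M)$, where $a_{n_k+1}\to\infty$, this is harmless; just drop the claim about the bounded range rather than assert it can be patched cheaply.
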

\begin{proof}
We follow an argument from \cite{Beck-Diophantine}.
Suppose $q_n$ is a principal denominator of $\alpha$ and $q_n r_n\in\cN$ for some $r_n\leq M$.
Let $N=q_n r_n.$  Since $f(x)=-\sum_{m=1}^d b_m \sum_{j=1}^\infty \frac{\sin(2\pi j(x+\beta_m))}{\pi j}$,  for each $j\in\N$
$$ \|S_N(\alpha, \cdot)\|_{L^2}^2\geq \frac{1}{\pi^2 j^2}
\int_0^1 \left(\sum_{m=1}^d b_m \sum_{k=0}^{N-1} \sin(2\pi j (x+k\alpha+\beta_m))\right)^2 \; dx. $$

Using the identities $\sum_{k=1}^N \sin(y+kx)=\frac{\cos(y+x/2)-\cos(y+(2N+1)x/2)}{2\sin(x/2)}$ and
$\cos A-\cos B=2\sin(\frac{A+B}{2})\sin(\frac{B-A}{2})$ we find that
\begin{align*}
&\|S_N(\alpha, \cdot)\|_{L^2}^2\geq \\
&\geq  \left(\frac{\sin (\pi N j\alpha)}{\pi j\sin(\pi j \alpha)}\right)^2
\int_0^1 \left(\sum_{m=1}^d b_m  \sin \left(2\pi \left(jx+
j{ \tfrac{{(N-1)}\alpha}{2}}\right)+2\pi j \beta_m\right)
\right)^2
  dx\\
&=\left(\frac{\sin (\pi N j\alpha)}{\pi j\sin(\pi j \alpha)}\right)^2
\int_0^1 \left(\sum_{m=1}^d b_m  \sin \left(2\pi (y+j \beta_m)\right)
\right)^2
  dy\\
& =\left(\frac{\sin (\pi N j\alpha)}{\pi j\sin(\pi j \alpha)}\right)^2 D(j\beta_1,\ldots,j\beta_m)\text{ with $D$ as in \eqref{D-function}}.
\end{align*}
We now take $j=N=r_n q_n$. The first term is bounded below because
$
\|N\alpha\|\leq M\|q_n\alpha\|\leq \frac{M}{q_{n+1}}\leq \frac{M}{a_{n+1} q_n}\leq \frac{M^2}{a_{n+1}N}= o(\tfrac{1}{N}),
$
so $\frac{\sin (\pi N^2 \alpha)}{\pi N\sin(\pi N \alpha)}\xrightarrow[n\to\infty]{}\pi^{-1}$. The second term is bounded below by $\epsilon_0$, because $N=q_n r_n\in \cN$. It follows that for all $n$ large enough, $\|S_{r_n q_n}(\alpha,\cdot)\|_2>\sqrt{\eps_0}/2\pi$.

For any $L^2$-function $\varphi$ and any $\heps>0$,
$$\|\varphi\|_{L^2}^2 \leq \|\varphi\|_{L^\infty}^2 \mes\{x: |\varphi(x)|\geq \heps\}+\heps^2.$$
Hence
$
\displaystyle \mes\{x: |\varphi(x)|\geq \heps\}\geq \frac{\|\varphi\|_{L^2}^2-\heps^2}{\|\varphi\|_{L^\infty}^2}.
$
We just saw that for all $n$ large enough, $\|S_{r_n q_n}(\alpha,\cdot)\|_2>\sqrt{\eps_0}/2\pi$, and
by the Denjoy-Koksma inequality
$\|S_{r_n q_n}(\alpha,\cdot)\|_{L^\infty}\leq M\mathsf V(f)$. So for some $\heps>0$ and  for all $n$ large enough,
$
\mes\{x: |S_{r_n q_n}(\alpha,x)|>\heps\}\geq \heps.
$

Looking at the inequality
$|S_{r_n q_n}(\alpha,x)|\leq \sum_{k=0}^{r_n-1} |S_{q_n}(\alpha, x+k q_n \alpha)|$, we see that if $|S_{r_n q_n}(\alpha,x)|\geq \heps$, then
$|S_{q_n}(\alpha, x+k q_n \alpha)|\geq \heps/M$ for some $0\leq k\leq M-1$.
So
for all $n$ large enough,
$
\mes\{x: |S_{q_n}(\alpha,x)|>\heps/M\}\geq \heps/M.
$
\end{proof}

\subsection{Proof of Theorem \ref{ThUniform}}

Let  $\Omega^*(\alpha)$ be the set of $x$ where the conclusion of Theorem \ref{ThUniform} holds.
$\Omega^*(\alpha)$ is $R_\alpha$-invariant and it is measurable by
Lemma \ref{Lemma-Omega-Measurable} in the appendix.
Therefore to show that $\Omega^*(\alpha)$ has  full measure, it suffices to show that it has positive measure.


Suppose $\alpha\in \cA$ and let $n_k\uparrow\infty$ be a sequence satisfying (\ref{OddDenom})  with $\cN$ given by \eqref{RelBigFC}.
There is no loss of generality in assuming that
$$
\frac {a_{n_k+1}}{a_0+\cdots+a_{n_k}}>k^3.
$$
So
$
q_{n_k+1}> k^3 L_k q_{n_k}$, where
$L_k:=a_0+\cdots + a_{n_k}.$

Recall that $\mu_{n_k}(x)=S_{q_{n_k}}(\alpha,x)$.
For all $k$ sufficiently large, there is a set $A_k$ of measure at least $\eps_2/2$
such that for all $x\in A_k$,
\begin{equation}\label{key-id}
S_{\ell q_{n_k}}(\alpha,x)=\ell\left(\mu_{n_k}(x)\pm \frac{C_1 \ell}{k^3 L_k}\right)
\text{ for all }\ell=0,1,\ldots, kL_k
\end{equation}
\begin{equation}
\label{key-idExt}
|\mu_{n_k}(x) |\geq \eps_1.
\end{equation}
This is because Lemma  \ref{Lem-mu-n}
says that the total measure of $x$ for which (\ref{key-id}) fails is $O(1/k^2)$
while \eqref{key-idExt} holds on the set of measure $\eps_2$
by Lemma \ref{LmDKV1}.

It follows that $\mes(\bigcap_{n>1}\bigcup_{k>n}A_k)\geq \eps_2/2$. Therefore  there exists $x$ which belongs to infinitely many $A_k$.
After re-indexing  $n_k$, we may assume that \eqref{key-id}, \eqref{key-idExt} are
satisfied for all $k\in \N.$
Henceforth, we fix such an $x$ and work with this $x$.
Let $$N_k(x):=k L_k q_{n_k}, \quad B_k(x):=k L_k |\mu_{n_k}(x)|,\quad
A_k(x):=\frac{1}{2}(\sgn(\mu_{n_k}(x))-1) B_k.$$
Any $n\leq N_k$ can be written uniquely in the form
$$n=l(n) q_{n_k}+r(n) \quad\text{with}\quad 0\leq l(n)\leq k L_k\text{ and }0\leq r(n)<q_{n_k}. $$
It is easy to see that
$\frac{l(n)}{k L_k}\xrightarrow[k\to\infty]{\text{dist}}\text{U}[0,1]\text{ as }
n\sim \text{U}(1,\ldots,N_k).$

Writing
$ S_n(\alpha,x)=S_{l(n) q_{n_k}}(\alpha,x)+S_{r(n)}(\alpha,x+\alpha l(n) q_{n_k})$
we obtain from \eqref{key-id} and Lemma \ref{Lem-max-S-r} that
$$S_n(\alpha,x)=l(n) \mu_{n_k} (x)+O(L_k) .$$
So $\frac{S_n(x)}{B_k}$ is asymptotically uniform on
$[0,1]$ when $\mu_{n_k}>0$, and $[-1,0]$ when $\mu_{n_k}<0$.
So $\frac{S_n(x)-A_k}{B_k}\xrightarrow[k\to\infty]{}\mathrm{U}[0,1]$, as $n\sim\mathrm{U}(1,\ldots,N_k(x))$.
\hfill$\Box$

\subsection{Proof of Theorem \ref{ThNonUniform}}
\label{SSNUAEx}
Let $\Omega(\alpha)$ denote the set of $x\in\T:=\R/\Z$ for which there are $B_N(x)\to\infty$ and $A_N(x)\in\R$ s.t.
\begin{equation}\label{assumption}
\frac{S_n(\alpha,x)-A_N(x)}{B_N(x)}\xrightarrow[N\to\infty]{\text{dist}}\text{U}[0,1],\text{ as }n\sim \text{U}(1,\ldots,N).
\end{equation}
This is a measurable set, see the appendix.
Assume by way of contradiction that $\text{mes}[\Omega(\alpha)]\neq 0$ for some $\alpha\in\cA$.

$\Omega(\alpha)$ is invariant under $R_\alpha(x)=x+\alpha\mod 1$ on $\T:=\R/\Z$. Since  $R_\alpha$ is ergodic, and $\Omega(\alpha)$ is measurable,   $\text{mes}[\Omega(\alpha)]=1$.

Since $\alpha\in\cA$,  there is an increasing sequence $n_k$ satisfying (\ref{OddDenom})  where $\cN$ is given by \eqref{RelBigFC}.
We can choose $n_k$ so that $q_{n_k} r_{n_k}\in \cN$ for $r_{n_k}\leq M$, and  $a_{n_k+1}>k^3 L_k$
where
$L_k:=a_0+\cdots + a_{n_k}.$
In particular,
$q_{n_k+1}> k^3 L_k q_{n_k}.$

Recall that $\mu_{n_k}(x):=S_{q_{n_k}}(\alpha, x)$. By Lemma \ref{LmDKV1}
we can choose $x$ such that for infinitely many $k$,
$|\mu_{n_k}(x)|\geq \eps_1.$ We will suppose that
$\mu_{n_k}(x)>0$ for infinitely many $k$;
the case where $\mu_{n_k}(x)<0$ for infinitely many $k$ is similar.

\medskip
\noindent
{\sc Claim 1.\/} It is possible to assume without loss of generality that
$\|B_{q_{n_k}}\|_\infty:=\sup_{x\in \Omega(\alpha)}|B_{q_{n_k}}(x)|\leq 3 C_3 L_k$ for all $k$
where $C_3$ is the constant from Lemma \ref{Lem-max-S-r}.

\medskip
\noindent
{\em Proof.\/} We claim that for every $x$ with (\ref{assumption}), $B_{q_{n_k}}(x)\leq 3 C_3 L_k$
for all $k$ large enough.
Otherwise, by Lemma \ref{Lem-max-S-r}, there are infinitely many $k$ s.t.   $B_{q_{n_k}}(x)>3\max\{|S_r(\alpha,x)|:r=0,\ldots,q_{n_k-1}\}$, whence
$|S_n(\alpha,x)/B_{q_{n_k}}|\leq \frac{1}{3}$ for all $0\leq n\leq q_{n_k}-1$.  In such circumstances, (\ref{assumption}) does not hold (the spread is not big enough).

Since $B_{q_{n_k}}(x)\leq 3 C_3 L_k$ for all $k$ large enough, there is no harm in replacing $B_{q_{n_k}}(x)$ in (\ref{assumption}) by $\min\{B_{q_{n_k}}(x), 3 C_3 L_k\}$.

\medskip
\noindent
{\sc Claim 2.\/} Fix $D>C=|\sum b_m|$, and let $E_k$ denote the set of $x\in\Omega(\alpha)$ s.t.
$
S_r(\alpha,x)=S_r(\alpha,R_\alpha^{\ell q_{n_k}} (x))\pm \frac{D \ell}{ q_{n_{k}+1}}$
 for all $0\leq \ell\leq B_{q_{n_k}}(x), 0\leq r<q_{n_k}-1
$.
Then $\text{mes}(E_k^c)\leq C_4 k^{-3}$.

\medskip
\noindent
{\em Proof.\/} If $x\not\in E_k$, then there are $0\leq \ell\leq B_{q_{n_k}}(x), 0\leq r<q_{n_k}-1$ s.t.
$$|S_r(\alpha,x)-S_r(\alpha,x+\ell q_{n_k}\alpha)|\geq \frac{D \ell}{q_{n_{k}+1}}.$$
By Lemma \ref{LmLip}, $\{x\}$, $\{x+\ell q_{n_k}\alpha\}$ are separated by a singularity of $S_r(\alpha,\cdot)$. So
$x$ belongs to a ball of radius $2\|\ell q_{n_k}\alpha\|$ centered at one of the $dq_{n_k}$ discontinuities of $S_{q_{n_k}}(\alpha,\cdots)$.
Thus
$
\text{mes}(E_k^c)\leq dq_{n_k}\cdot 2\|\ell q_{n_k}\alpha\|.
$
Now $\|\ell q_{n_k}\alpha\|\leq \ell\|q_{n_k}\alpha\|\leq \frac{\|B_{q_{n_k}}\|_\infty}{q_{n_k+1}}\leq
\frac{3 C_3 L_k}{q_{n_k+1}}\leq \frac{3C_3 }{k^3  q_{n_k}}$
by our choice of $n_k$. So $\text{mes}(E_k^c)\leq {C_4}/{k^3}$ with $C_4:=6dC_3$.

\medskip
\noindent
{\sc Claim 3.\/}
 Let $F_k$ denote the set of $x\in\Omega(\alpha)$ s.t.
$$
S_{\ell q_{n_k}}(\alpha,x)=\ell \left(\mu_{n_k}(x) \pm \frac{C_1\ell}{k^3 L_k}\right)
\text{ for all }0\leq \ell\leq B_{q_{n_k}}(x).
$$
Then $\text{mes}(F_k^c)\leq C_5 k^{-2}$. 

\medskip
\noindent
{\em Proof.\/} This follows from Lemma \ref{Lem-mu-n}.

\medskip
By Claims 2 and 3, and a Borel--Cantelli argument,  for a.e. $x$ there is $k_0(x)$ s.t. $x\in E_k\cap F_k$ for all $k\geq k_0(x)$.

Suppose $k\geq k_0(x)$, and let $N_k:=q_{n_k} B_{q_{n_k}}(x)$. Every $0\leq n\leq N_k-1$ can be uniquely represented as $n=\ell q_{n_k}+r$ with $0\leq \ell\leq B_{q_{n_k}}(x)-1$ and
$0\leq r\leq q_{n_k}-1$. Using the bound $\|B_{q_{n_k}}\|_\infty=O(L_k)$, we find:
\begin{align*}
&\frac{S_n(\alpha,x)-A_{q_{n_k}}(x)}{B_{q_{n_k}}(x)}=\frac{S_{\ell q_{n_k}}(\alpha,x)}{B_{q_{n_k}}(x)}+\frac{S_r(\alpha,R_\alpha^{\ell q_{n_k}}x)-A_{q_{n_k}}(x)}{B_{q_{n_k}}(x)}\\
&=\frac{S_{\ell q_{n_k}}(\alpha,x)}{B_{q_{n_k}}(x)}+\frac{S_r(\alpha,x)-A_{q_{n_k}}(x)+o(1)}{B_{q_{n_k}}(x)},\text{ because }x\in E_k\\
&=\frac{\ell(\mu_{n_k}(x)+o(1))}{B_{q_{n_k}}(x)}+\frac{S_r(\alpha,x)-A_{q_{n_k}}(x)+o(1)}{B_{q_{n_k}}(x)}, \text{ because $x\in F_k$}.
\end{align*}

If  $n\sim \mathrm{U}(0,\ldots, N_k-1)$, then $\ell,r$ are independent random variables, $\ell\sim\mathrm{U}(0,\ldots,B_{q_{n_k}}(x)-1)$ and $r\sim \mathrm{U}(0,\ldots,q_{n_k}-1)$.
Thus the distribution of
$\frac{\ell(\mu_{n_k}(x)+o(1))}{B_{q_{n_k}}(x)}$ is close to $\mathrm{U}[0,\mu_{n_k}(x)]$,  and the distribution of $\frac{S_r(\alpha,x)-A_{q_{n_k}}(x)+o(1)}{B_{q_{n_k}}(x)}$ converges to $\mathrm{U}[0,1]$
(because $x\in \Omega$).

Taking a subsequence such that $\mu_{n_k}(x)\to \breps>\eps_1$ we see that the random variables
$\displaystyle \frac{S_n(\alpha,x)-A_{q_{n_k}}(x)}{B_{q_{n_k}}(x)}$, where $n\sim \text{U}(0, \ldots, n_k-1)$,
converge in distribution to the sum of two independent uniformly distributed random variables.
This contradicts to \eqref{assumption}, because the sum of two independent uniform random variables is not uniform.\hfill$\Box$

\appendix
\section{Measurability concerns}
Let $\Omega(\alpha)$
denote the set of $x\in\T:=\R/\Z$ such that for some  $B_N(x)\to\infty$ and $A_N(x)\in\R$,
$
\frac{S_n(\alpha,x)-A_N(x)}{B_N(x)}\xrightarrow[N\to\infty]{\text{dist}}U[0,1],\text{ as }n\sim \text{U}(1,\ldots,N)
$
and let
$\Omega^*(\alpha)$ denote the set of $x\in\T$ such that along a subsequence $N_k(x)$ there exist
 some  $B_{N_k}(x)\to\infty$ and $A_{N_k}(x)\in\R$,
$
\frac{S_n(\alpha,x)-A_{N_k}(x)}{B_{N_k}(x)}\xrightarrow[k\to\infty]{\text{dist}}\text{U}[0,1],\text{ as }n\sim \text{U}(1,\ldots,N_k).
$
We make no assumptions on the measurability of  $A_N,B_N, N_k$ as functions of $x$.
The purpose of this section is to prove:
\begin{lem}\label{Lemma-Omega-Measurable}
$\Omega(\alpha)$ and $\Omega^*(\alpha)$
are measurable.
\end{lem}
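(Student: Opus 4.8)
The plan is to remove the unknown, possibly non-measurable, normalizing sequences and replace them by canonical ones built from order statistics of the finite family $\{S_n(\alpha,\cdot)\}_{n=1}^N$, which are Borel in $x$ for each fixed $N$ and, as I will show, are ``as good'' as arbitrary normalizers. Fix $\alpha$. For each $n$ the map $x\mapsto S_n(\alpha,x)$ is Borel (a finite sum of functions of bounded variation). For fixed $N$ let $F_N^x(t):=\frac1N\#\{1\le n\le N:\ S_n(\alpha,x)\le t\}$ be the CDF of the empirical measure $\nu_N^x$ of the ergodic sums, and let $q_N^p(x):=\inf\{t:\ F_N^x(t)\ge p\}$ be its $p$-quantile. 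For fixed $N,t$ the function $x\mapsto F_N^x(t)$ is Borel, and since $\{x:\ q_N^p(x)\le t\}=\{x:\ F_N^x(t)\ge p\}$, so is $x\mapsto q_N^p(x)$. Set
$$
B_N(x):=2\bigl(q_N^{3/4}(x)-q_N^{1/4}(x)\bigr),\qquad A_N(x):=\tfrac12\bigl(3q_N^{1/4}(x)-q_N^{3/4}(x)\bigr),
$$
which are Borel in $x$ for every $N$.

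The second step is the reduction: for a fixed $x$ and any index set $\cI\subseteq\N$ with $\sup\cI=\infty$, the normalized ergodic sums $\frac{\nu_N^x-A_N}{B_N}$ converge weakly to $\mathrm{U}[0,1]$ along $\cI$ with $B_N\to\infty$ if and only if $\frac{\nu_N^x-A_N(x)}{B_N(x)}$ does. One direction is immediate. For the other, suppose $\frac{\nu_N^x-A_N}{B_N}\Rightarrow\mathrm{U}[0,1]$ along $\cI$. Since the quantile function of $\mathrm{U}[0,1]$ is continuous and strictly increasing on $(0,1)$ and quantiles commute with increasing affine maps, $\frac{q_N^p(x)-A_N}{B_N}\to p$ along $\cI$ for $p=\tfrac14,\tfrac34$. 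Hence $\frac{B_N(x)}{B_N}=2\bigl(\frac{q_N^{3/4}(x)-A_N}{B_N}-\frac{q_N^{1/4}(x)-A_N}{B_N}\bigr)\to1$, so $B_N(x)\to\infty$, and $\frac{A_N(x)-A_N}{B_N}=\tfrac12\bigl(3\frac{q_N^{1/4}(x)-A_N}{B_N}-\frac{q_N^{3/4}(x)-A_N}{B_N}\bigr)\to0$; a Slutsky argument then yields $\frac{\nu_N^x-A_N(x)}{B_N(x)}=\bigl(\frac{\nu_N^x-A_N}{B_N}-\frac{A_N(x)-A_N}{B_N}\bigr)\frac{B_N}{B_N(x)}\Rightarrow\mathrm{U}[0,1]$ along $\cI$. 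Taking $\cI=\N$ handles $\Omega(\alpha)$ and taking general $\cI=\{N_k\}$ handles $\Omega^*(\alpha)$.

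It remains to assemble measurability. As the CDF $F$ of $\mathrm{U}[0,1]$ is continuous, $\nu_N\Rightarrow\mathrm{U}[0,1]$ along $\N$ is equivalent to $F_{\nu_N}(t)\to F(t)$ for all $t\in\Q$, so
$$
\Omega(\alpha)=\Bigl\{x:\ B_N(x)\xrightarrow[N\to\infty]{}\infty\Bigr\}\cap\bigcap_{t\in\Q}\Bigl\{x:\ \tfrac1N\#\{n\le N:\ S_n(\alpha,x)\le A_N(x)+tB_N(x)\}\xrightarrow[N\to\infty]{}F(t)\Bigr\};
$$
each event on the right is built from the Borel functions $x\mapsto B_N(x)$ and $x\mapsto\frac1N\#\{n\le N:\ S_n(\alpha,x)\le A_N(x)+tB_N(x)\}$ via the usual $\bigcap_m\bigcup_{N_0}\bigcap_{N\ge N_0}$ description of convergence, so $\Omega(\alpha)$ is Borel. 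For $\Omega^*(\alpha)$ I would enumerate these countably many conditions as $\psi_j^x(N)\to L_j$, $j\in J$ (one for $B_N(x)\to\infty$, coded as $\frac1{1+B_N(x)}\to0$, one for each $t\in\Q$), and use the elementary fact that a single increasing sequence $N_k$ realizing all of them exists if and only if for every finite $J'\subset J$, every $m\in\N$ and every $N_0$ there is $N\ge N_0$ with $|\psi_j^x(N)-L_j|<\tfrac1m$ for all $j\in J'$ (forward implication trivial, converse by diagonal extraction). Hence
$$
\Omega^*(\alpha)=\bigcap_{\substack{J'\subset J\\ \mathrm{finite}}}\ \bigcap_{m\in\N}\ \bigcap_{N_0\in\N}\ \bigcup_{N\ge N_0}\ \bigcap_{j\in J'}\Bigl\{x:\ |\psi_j^x(N)-L_j|<\tfrac1m\Bigr\},
$$
a countable Boolean combination of Borel sets, hence Borel. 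I expect the delicate points to be the robustness of the quantile normalizers under weak convergence to $\mathrm{U}[0,1]$ (the heart of the reduction) and, for $\Omega^*(\alpha)$, the passage from ``each condition admits a good subsequence'' to ``one subsequence works for all''; the remaining verifications are routine.
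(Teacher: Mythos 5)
Your proposal is correct and follows essentially the same route as the paper: replace the unknown normalizers by canonical ones built from two quantiles of the empirical law of $S_n(\alpha,x)$, prove that these quantile normalizers are forced (up to $o(B_N)$ and a $1+o(1)$ factor) whenever any normalizers work, and then express the convergence set as a countable Boolean combination of Borel conditions. The paper's Lemma~\ref{Lemma-measurable-constants} is your quantile-robustness step (phrased with the percentiles $\chi^\pm$ at levels $1/3,2/3$ rather than $1/4,3/4$, and with a bit more care about atoms of $S_N$), and the paper's final decomposition uses characteristic functions at rational $t$ plus a tightness event where you use CDF convergence at rational $t$ plus $B_N(x)\to\infty$ -- both are valid ways to encode weak convergence to a law with continuous CDF.
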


The crux of the argument  is to  show that $A_N(x),B_N(x)$ can be replaced by measurable functions, defined in terms of the
percentiles of the random quantities $S_n(x,\alpha)$, $n\sim \text{U}(1,\ldots,N)$.

Recall that given $0<t<1$, the  {\em upper and lower $t$-percentiles} of a random variable $X$ are defined by
$$
\begin{aligned}
\chi^+(X,t)&:=\inf\{\xi: \Pr(X\leq \xi)> t\}\\
\chi^-(X,t)&:=\sup\{\xi: \Pr(X\leq \xi)< t\}
\end{aligned}
\ \ \ \ \ \ (0<t<1).
$$
Notice that $\Pr(X\leq \chi^+(X,t))\geq t$, $\Pr(X<\chi^-(X,t))\leq t$, and
$\Pr(\chi^-(X,t)<X<\chi^+(X,t))=0$. In case $X$ is {\em non-atomic} (i.e. $P(X=a)=0$ for all $a$), we can say more:

\begin{lem}\label{Lemma-Percentile}
Suppose $X$ is a non-atomic real valued random variable, fix $0<t<1$ and let $\chi^\pm_t:=\chi^\pm(X,t)$,  then
\begin{enumerate}[(a)]
\item $\Pr(X<\chi^+_t)=t$ and $\Pr(X<\chi^-_t)=t$;
\item $\forall\epsilon>0$, $\Pr(\chi^-_t-\epsilon<X<\chi^-_t), \Pr(\chi^+_t<X<\chi^+_t+\epsilon)$ are positive;
\item $\exists t_1<t_2$ s.t. $\chi^-_{t_1}<\chi^+_{t_2}$ and $\chi^-_{t_1},\chi^+_{t_2}$ have the same sign.
\end{enumerate}
\end{lem}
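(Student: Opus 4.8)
\emph{Plan of proof.} Everything will be expressed through the distribution function $F(\xi):=\Pr(X\le\xi)$. Non-atomicity of $X$ enters only via three standard facts, used throughout: $F$ is continuous; $\Pr(X<\xi)=\Pr(X\le\xi)=F(\xi)$ for every $\xi$; and $\Pr(a<X<b)=F(b)-F(a)$ for $a<b$. I will also quote the two inequalities stated just before the lemma, namely $\Pr(X\le\chi^+_t)\ge t$ and $\Pr(X<\chi^-_t)\le t$. As a preliminary remark, for $t\in(0,1)$ both $\chi^\pm_t$ are finite real numbers, since $\{F>t\}$ is a non-empty upper half-line bounded below and $\{F<t\}$ a non-empty lower half-line bounded above (using $F\to0$ at $-\infty$ and $F\to1$ at $+\infty$).

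For (a): from $\chi^+_t=\inf\{\xi:F(\xi)>t\}$ one gets $F(\xi)\le t$ for all $\xi<\chi^+_t$, so $\Pr(X<\chi^+_t)=\lim_{\xi\uparrow\chi^+_t}F(\xi)\le t$; since also $\Pr(X<\chi^+_t)=F(\chi^+_t)=\Pr(X\le\chi^+_t)\ge t$, equality holds. Symmetrically, $F(\xi)\ge t$ for all $\xi>\chi^-_t$, so by right-continuity $\Pr(X<\chi^-_t)=F(\chi^-_t)=\lim_{\xi\downarrow\chi^-_t}F(\xi)\ge t$, and with the quoted reverse inequality this gives $\Pr(X<\chi^-_t)=t$. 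For (b): part (a) turns $\Pr(\chi^-_t-\epsilon<X<\chi^-_t)$ into $t-F(\chi^-_t-\epsilon)$, which is positive because $\chi^-_t-\epsilon$ lies strictly below $\sup\{F<t\}$, so $F(\chi^-_t-\epsilon)\le F(\eta)<t$ for some $\eta\in(\chi^-_t-\epsilon,\chi^-_t]$; likewise $\Pr(\chi^+_t<X<\chi^+_t+\epsilon)=F(\chi^+_t+\epsilon)-t>0$ because $\chi^+_t+\epsilon$ lies strictly above $\inf\{F>t\}$.

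For (c): I would first establish the auxiliary inequality $\chi^-_{t_1}<\chi^+_{t_2}$ valid for \emph{every} pair $0<t_1<t_2<1$: choosing, by continuity and the intermediate value theorem, a point $\xi_*$ with $F(\xi_*)=\tfrac12(t_1+t_2)$, continuity at $\xi_*$ confines $\{F<t_1\}$ strictly to the left of $\xi_*$ and $\{F>t_2\}$ strictly to the right of $\xi_*$, so $\chi^-_{t_1}<\xi_*<\chi^+_{t_2}$. To secure the sign, split on $p:=\Pr(X<0)=F(0)$. If $p<1$, take $p<t_1<t_2<1$; then $F(0)<t_1$, so $F(\delta)<t_1$ for a small $\delta>0$, giving $\chi^-_{t_1}\ge\delta>0$, whence $\chi^+_{t_2}>\chi^-_{t_1}>0$ by the auxiliary inequality — both positive. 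If $p=1$, then $X<0$ a.s., so $F(\xi)\to\Pr(X<0)=1$ as $\xi\uparrow0$; taking any $0<t_1<t_2<1$ one finds $\xi_0<0$ with $F(\xi_0)>t_2$, so $\chi^+_{t_2}\le\xi_0<0$, and then $\chi^-_{t_1}<\chi^+_{t_2}<0$ — both negative. In all cases $\chi^-_{t_1}$ and $\chi^+_{t_2}$ are nonzero of a common sign.

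The only step that is not mere bookkeeping with $F$ is the sign claim in (c): one must isolate the degenerate case in which $X$ lives entirely in $(-\infty,0)$, where both percentiles are automatically negative, from the generic case, and in the generic case use continuity of $F$ to promote $\chi^-_{t_1}\ge0$ to the strict inequality $\chi^-_{t_1}>0$. I expect this to be the most delicate part, though it is still short.
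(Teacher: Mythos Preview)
Your proof is correct. Parts (a) and (b) are essentially the paper's argument, though your treatment of (a) is more streamlined: by working directly with the continuous distribution function $F$ and the infimum/supremum definitions you avoid the paper's case split on whether $\chi^+_t=\chi^-_t$ and the limit computation that follows it.

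For (c) your route is genuinely different. The paper argues constructively: assuming (say) $\Pr(X>0)>0$, it picks $0<a<b$ with $\Pr(X\in(0,a))>0$ and $\Pr(X\in(a,b))>0$ and sets $t_1:=F(a)$, $t_2:=F(b)$, so that the percentiles are sandwiched by $a$ and $b$. You instead first prove the auxiliary fact that $\chi^-_{t_1}<\chi^+_{t_2}$ holds for \emph{every} pair $t_1<t_2$ (via an intermediate-value point $\xi_*$), and then choose $t_1,t_2$ relative to $p=F(0)$ to force a common sign. Your approach has the small bonus of yielding the universal inequality $\chi^-_{t_1}<\chi^+_{t_2}$, and it cleanly isolates the degenerate case $F(0)=1$; the paper's approach is slightly more hands-on but equally short. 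Both are valid and of comparable length.
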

\begin{proof}Since $X$ is non-atomic,
$\Pr(X<\chi^+_t)=\Pr(X\leq \chi^+_t)\geq t$ and $\Pr(X<\chi^-_t)\leq t$. If $\chi^+_t=\chi^-_t$, part (a) holds. If $\chi^+_t>\chi^-_t$ then for all $h>0$ small enough $\chi^-_t+h<\chi^+_t-h$ whence
\begin{align*}
0&\leq \Pr(\chi^-_t<X<\chi^+_t)=\lim\limits_{h\to 0^+}\Pr(\chi^-_t+h<X<\chi^+_t-h)\\
&=\lim\limits_{h\to 0^+}\Pr(X<\chi^+_t-h)-\lim\limits_{h\to 0^+}\Pr(X\leq \chi^-_t+h)\leq t-t=0.
\end{align*}
Necessarily $\lim\limits_{h\to 0^+}\Pr(X<\chi^+_t-h)=t$ and $\lim\limits_{h\to 0^+}\Pr(X\leq \chi^-_t+h)=t$, which gives us
$\Pr(X<\chi^+_t)=t$ and $\Pr(X<\chi^+_t)=\Pr(X\leq \chi^+_t)=t$.

For (b) assume by contradiction that  $\Pr(\chi^-_t-\epsilon<X<\chi^-_t)=0$, then  for all $\chi^-_t-\epsilon<\xi<\chi^-_t$, $\Pr(X\leq \xi)=\Pr(X< \chi^-_t)=t$, whence
$\chi^-_t\leq \chi^-_t-\epsilon$, a contradiction. Similarly, $\Pr(\chi^+_t<X<\chi^+_t+\epsilon)=0$ is impossible.

To prove (c) note that since $X$ is non-atomic, either $\Pr(X>0)$ or $\Pr(X<0)$ is positive. Assume w.l.o.g. that $\Pr(X>0)\neq 0$. By non-atomicity, there are positive $a<b$ s.t. $\Pr(X\in (0,a))\neq 0$ and $\Pr(X\in (a,b))\neq 0$. Take $t_1:=\Pr(X<a)$ and $t_2:=\Pr(X<b)$.
\end{proof}

From now on fix a non-atomic random variable $Y$, and choose  $0<t_1<t_2<1$ as in Lemma \ref{Lemma-Percentile}(c) s.t. $\chi^-(Y,t_1)<\chi^+(Y,t_2)$ and $\sgn(\chi^-(Y,t_1))=\sgn(\chi^+(Y,t_2))$.

\begin{lem}\label{Lemma-measurable-constants}
Let $S_N$ be (possibly atomic) random variables s.t. for some $A_N\in \R$ and $B_N\to\infty$,   $\frac{S_N-A_N}{B_N}\xrightarrow[N\to\infty]{\text{dist}}Y$. Then
$\frac{S_N-A_N^\ast}{B_N^\ast}\xrightarrow[N\to\infty]{\text{dist}}Y$, where $A_N^\ast,B_N^\ast$ are the unique solution to
\begin{equation}\label{star-system}
\left\{
\begin{aligned}
A_N^\ast+B_N^\ast \chi^-(Y,t_1)&=\chi^-(S_N,t_1)\\
A_N^\ast+B_N^\ast \chi^+(Y,t_2)&=\chi^+(S_N,t_2).
\end{aligned}
\right.
\end{equation}
\end{lem}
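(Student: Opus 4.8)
The plan is to work with the percentiles of the rescaled sums and to reduce everything to Slutsky's theorem. First I would set $Z_N:=(S_N-A_N)/B_N$, so that $Z_N\xrightarrow{\text{dist}}Y$ and, since $B_N\to\infty$, one may assume $B_N>0$ for all $N$. The only structural facts I would use about percentiles are: (i) they are \emph{equivariant} under the increasing affine change of variable $\xi\mapsto A_N+B_N\xi$, i.e. $\chi^-(S_N,t)=A_N+B_N\,\chi^-(Z_N,t)$ and $\chi^+(S_N,t)=A_N+B_N\,\chi^+(Z_N,t)$ for every $t\in(0,1)$; and (ii) the linear system \eqref{star-system} has a unique solution $(A_N^\ast,B_N^\ast)$, because its coefficient matrix has determinant $\chi^+(Y,t_2)-\chi^-(Y,t_1)\neq 0$ by the choice of $t_1<t_2$ in Lemma \ref{Lemma-Percentile}(c).

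The heart of the argument — and the step I expect to be the main obstacle — is to establish convergence of the relevant percentiles:
$$u_N:=\chi^-(Z_N,t_1)\to p:=\chi^-(Y,t_1),\qquad v_N:=\chi^+(Z_N,t_2)\to q:=\chi^+(Y,t_2).$$
Since $Y$ is non-atomic, $F_Y$ is continuous, so weak convergence gives $\Pr(Z_N\le\xi)\to F_Y(\xi)$ for \emph{every} $\xi\in\R$. By Lemma \ref{Lemma-Percentile}(a), $F_Y(p)=t_1$, and — this is precisely what the choice of $t_1$ via Lemma \ref{Lemma-Percentile}(b),(c) secures — $F_Y$ crosses the level $t_1$ strictly at $p$, i.e. $F_Y(p-\epsilon)<t_1<F_Y(p+\epsilon)$ for all $\epsilon>0$; likewise $F_Y$ crosses $t_2$ strictly at $q$. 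Given $\epsilon>0$: from $\Pr(Z_N\le p-\epsilon)\to F_Y(p-\epsilon)<t_1$ one gets $u_N\ge p-\epsilon$ for large $N$ (then $p-\epsilon$ lies in the set defining $\chi^-$); and from $\Pr(Z_N\le p+\epsilon)\to F_Y(p+\epsilon)>t_1$, together with monotonicity of $\xi\mapsto\Pr(Z_N\le\xi)$, one gets $u_N\le p+\epsilon$ for large $N$. Hence $u_N\to p$, and the same reasoning gives $v_N\to q$. The delicate point is the strict two-sided crossing: without it the percentiles of $Z_N$ could drift inside a flat stretch of $F_Y$ and fail to converge, so it is essential that $t_1,t_2$ were chosen as in Lemma \ref{Lemma-Percentile}(c) rather than arbitrarily.

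It then remains to assemble the pieces. Subtracting the two lines of \eqref{star-system} and using equivariance, $B_N^\ast(q-p)=\chi^+(S_N,t_2)-\chi^-(S_N,t_1)=B_N(v_N-u_N)$, so $B_N^\ast/B_N=(v_N-u_N)/(q-p)\to 1$; in particular $B_N^\ast\to\infty$. From the first line, $A_N^\ast=\chi^-(S_N,t_1)-B_N^\ast p=A_N+B_Nu_N-B_N^\ast p$, whence $(A_N^\ast-A_N)/B_N=u_N-(B_N^\ast/B_N)\,p\to p-p=0$. Therefore
$$\frac{S_N-A_N^\ast}{B_N^\ast}=\frac{B_N}{B_N^\ast}\Big(\frac{S_N-A_N}{B_N}-\frac{A_N^\ast-A_N}{B_N}\Big)=\frac{B_N}{B_N^\ast}\Big(Z_N-\frac{A_N^\ast-A_N}{B_N}\Big),$$
and since the deterministic factors satisfy $B_N/B_N^\ast\to 1$ and $(A_N^\ast-A_N)/B_N\to 0$ while $Z_N\xrightarrow{\text{dist}}Y$, Slutsky's theorem yields $(S_N-A_N^\ast)/B_N^\ast\xrightarrow{\text{dist}}Y$, as claimed.
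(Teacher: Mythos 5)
Your route is genuinely different from the paper's. You prove the \emph{two-sided} convergence of the standardized percentiles, $u_N:=\chi^-(Z_N,t_1)\to p$ and $v_N:=\chi^+(Z_N,t_2)\to q$, and then finish with Slutsky; the paper instead proves only the \emph{one-sided} bounds $\liminf u_N\geq p$ and $\limsup v_N\leq q$ (this is \eqref{First-Limit}--\eqref{Second-Limit}, obtained via $\vf_N$, $\vf_N^\ast$ and the auxiliary limit \eqref{percentile-conv}) and then tries to squeeze \eqref{target-limits} out of them by dividing and subtracting. Your argument is shorter and more transparent where it works, but it proves a strictly stronger intermediate statement, and that is exactly where the gap lies.

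The gap is in your ``strict two-sided crossing'' claim $F_Y(p-\epsilon)<t_1<F_Y(p+\epsilon)$, which you attribute to Lemma \ref{Lemma-Percentile}(b),(c). Part (b) gives positivity of mass \emph{below} the lower percentile and \emph{above} the upper percentile, namely $\Pr(\chi^-_t-\epsilon<Y<\chi^-_t)>0$ and $\Pr(\chi^+_t<Y<\chi^+_t+\epsilon)>0$; it says nothing about $\Pr(\chi^-_t<Y<\chi^-_t+\epsilon)$. So $F_Y(p+\epsilon)>t_1$ is \emph{not} secured. If $F_Y$ has a flat stretch at level $t_1$, i.e.\ $\chi^-(Y,t_1)<\chi^+(Y,t_1)$ (take $Y$ uniform on $[0,1]\cup[2,3]$ and $t_1=1/2$, $p=1$), then $F_Y(p+\epsilon)=t_1$ for small $\epsilon$, and your upper bound $u_N\leq p+\epsilon$ fails: a sequence $Z_N\Rightarrow Y$ carrying a vanishing atom inside the gap $(\chi^-_{t_1},\chi^+_{t_1})$ (say mass $1/N$ at $1.5$, subtracted from $[0,1]$) has $\chi^-(Z_N,t_1)=1.5$ for all $N$. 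As written your proof establishes only $\liminf u_N\geq p$ and $\limsup v_N\leq q$, the same one-sided bounds the paper has, and these alone do not force $B_N^\ast/B_N\to1$, $(A_N^\ast-A_N)/B_N\to0$. For the paper's actual application $Y=\mathrm U[0,1]$ the distribution function is strictly increasing on $(0,1)$, there are no flat stretches, the two-sided crossing holds, and then your Slutsky argument is complete and clean; but as a proof of the lemma for a \emph{general} non-atomic $Y$ you need either to show that the $t_1,t_2$ of Lemma \ref{Lemma-Percentile}(c) can be chosen to avoid flat stretches (the paper's proof of (c) does not do this), or to replace the two-sided convergence of percentiles by an argument, like the paper's manipulation of \eqref{First-Limit}--\eqref{Second-Limit}, that extracts \eqref{target-limits} from the one-sided bounds alone.
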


\begin{proof}
Without loss of generality, $\chi^-(Y,t_1), \chi^+(Y,t_2)$ are both positive.

We  need the following fact (which is not automatic since $S_N$ are allowed to be atomic):
\begin{equation}\label{percentile-conv}
\lim_{N\to\infty}\Pr(S_N<\chi^-(S_N,t))=t\text{ for all }0<t<1.
\end{equation}
Indeed, given
$\epsilon>0$, let
$
\xi_N:=B_N \chi^-(Y,t-\epsilon)+A_N,
$
then
{\small $$\Pr(S_N<\xi_N)=
\Pr\left(\tfrac{S_N-A_N}{B_N}<\chi^-(Y,t-\epsilon)\right)
\xrightarrow[N\to\infty]{}\Pr(Y<\chi^-(Y,t-\epsilon))=t-\epsilon,$$}
by Lemma \ref{Lemma-Percentile}(a). So for all $N$ large enough, $\xi_N\leq \chi^-(S_N,t)$, whence $\liminf\Pr(S_N<\chi^-(S_N,t))\geq \lim\Pr(S_N<\xi_N)=t-\epsilon$. Since $\epsilon$ is arbitrary,  $\liminf\Pr(S_N<\chi^-(S_N,t))\geq t$.
 The other inequality $\limsup\Pr(S_N<\chi^-(S_N,t))\leq t$ is clear since $\Pr(S_N<\chi^-(S_N,t))\leq t$ for all $N$.

\medskip
With (\ref{percentile-conv}) proved, we proceed to prove that
\begin{equation}\label{target-limits}
 \frac{A_N^\ast-A_N}{B_N}\xrightarrow[N\to\infty]{}0\ \  \text{ and }\ \ \frac{B_N^\ast}{B_N}\xrightarrow[N\to\infty]{}1.
\end{equation}
It will then be obvious that $\frac{S_N-A_N}{B_N}\xrightarrow[N\to\infty]{\text{dist}}Y$ implies $\frac{S_N-A_N^\ast}{B_N^\ast}\xrightarrow[N\to\infty]{\text{dist}}Y$.

Define two affine transformations, $\vf_N(t)=\frac{t-A_N}{B_N}$ and $\vf_N^\ast(t)=\frac{t-A_N^\ast}{B_N^\ast}$. Notice that $(\vf_N^\ast)^{-1}(t)=A_N^\ast+B_N^\ast t$, so
 $(\vf_N^\ast)^{-1}(\chi^-(Y,t_1))=\chi^-(S_N,t_1)$,  by (\ref{star-system}). Since $B_N^\ast=\frac{\chi^+(S_N,t_2)-\chi^-(S_N,t_1)}{\chi^+(Y,t_2)-\chi^-(Y,t_1)}>0$,  $\vf_N^\ast$ is increasing. By \eqref{percentile-conv},
$
\Pr(\vf_N^\ast(S_N)<\chi^-(Y,t_1))=\Pr(S_N<\chi^-(S_N,t_1))\xrightarrow[N\to\infty]{}t_1.
$
So
\begin{align*}
t_1&=\lim_{N\to\infty}\Pr(\vf_N^\ast(S_N)<\chi^-(Y,t_1))\\
&=\lim_{N\to\infty}\Pr\bigl(\vf_N(S_N)<\vf_N[(\vf_N^\ast)^{-1}(\chi^-(Y,t_1))]\bigr)\\
&=\lim_{N\to\infty}\Pr\biggl(\vf_N(S_N)<\frac{B_N^\ast}{B_N}\left(\chi^-(Y,t_1)+\frac{A_N^\ast-A_N}{B_N}\right)\biggr).
\end{align*}

We claim  that this implies that
\begin{equation}\label{First-Limit}
\liminf_{N\to\infty} \frac{B_N^\ast}{B_N}\left(\chi^-(Y,t_1)+\frac{A_N^\ast-A_N}{B_N}\right)\geq \chi^-(Y,t_1).
\end{equation}
Otherwise, $\exists\epsilon$ s.t.
$\liminf\limits_{N\to\infty} \frac{B_N^\ast}{B_N}\left(\chi^-(Y,t_1)+\frac{A_N^\ast-A_N}{B_N}\right)<\chi^-(Y,t_1)-\epsilon$, so
\begin{align*}
t_1&=\liminf_{N\to\infty}\Pr\biggl(\vf_N(S_N)<\frac{B_N^\ast}{B_N}\left(\chi^-(Y,t_1)+\frac{A_N^\ast-A_N}{B_N}\right)\biggr)\\
&\leq \liminf_{N\to\infty}\Pr\biggl(\vf_N(S_N)<\chi^-(Y,t_1)-\epsilon\biggr)=\Pr(Y<\chi^-(Y,t_1)-\epsilon)\\
&=\Pr(Y<\chi^-(Y,t_1))-\Pr(\chi^-(Y,t_1)-\epsilon\leq Y<t_1)\\
&<t_1,\text{ by Lemma \ref{Lemma-Percentile}, parts (a),(b)}.
\end{align*}

Similarly, one shows that
\begin{equation}\label{Second-Limit}
\limsup_{N\to\infty} \frac{B_N^\ast}{B_N}\left(\chi^+(Y,t_2)+\frac{A_N^\ast-A_N}{B_N}\right)\leq \chi^+(Y,t_2).
\end{equation}

It remains to see that \eqref{First-Limit} and \eqref{Second-Limit} imply
 \eqref{target-limits}.
First we divide \eqref{First-Limit} by
\eqref{Second-Limit} to obtain
$$
\limsup_{N\to\infty} \frac{\chi^-(Y,t_1)+\frac{A_N^\ast-A_N}{B_N}}{\chi^+(Y,t_2)+\frac{A_N^\ast-A_N}{B_N}}\geq \frac{\chi^-(Y,t_1)}{\chi^+(Y,t_2)}.
$$
Since $x\mapsto\frac{a+x}{b+x}$ is strictly decreasing
 on $[0, \infty)$ when $a>b>0$, this implies that
\begin{equation}
\label{ANAN}
\limsup_{N\to\infty}\frac{A_N^\ast-A_N}{B_N}\leq 0.
\end{equation}
Looking at \eqref{First-Limit}, and recalling that $\chi^-(Y,t_1)>0$, we deduce that
\begin{equation}
\label{BBAst}
\liminf_{N\to\infty}\frac{B_N^\ast}{B_N}\geq 1.
\end{equation}
Next we look at the difference of \eqref{First-Limit} and \eqref{Second-Limit} and obtain
$$
\limsup_{N\to\infty}\frac{B_N^\ast}{B_N}\bigl(\chi^+(Y,t_2)-\chi^-(Y,t_1)\bigr)\leq \chi^+(Y,t_2)-\chi^-(Y,t_1),
$$
whence $\limsup (B_N^\ast/B_N)\leq 1$.  Together  with \eqref{BBAst}, this
proves that $B_N^\ast/B_N\xrightarrow[N\to\infty]{}1$. Substituting this in \eqref{First-Limit}, gives $\liminf \frac{A_N^\ast-A_N}{B_N}\geq 0$, which,  in view of \eqref{ANAN}, implies that
$\frac{A_N^\ast-A_N}{B_N}\xrightarrow[N\to\infty]{}0$.
This completes the proof of \eqref{target-limits},
and with it, the lemma.
\end{proof}

\begin{proof}
[Proof of Lemma \ref{Lemma-Omega-Measurable}]
We begin with the  measurability of $\Omega(\alpha)$.

Let $S_N(x)$ denote the random variable equal to $S_n(\alpha,x)$ with probability $\tfrac{1}{N}$ for each $1\leq n\leq N$.

We will apply  Lemma \ref{Lemma-measurable-constants} with $Y:=\text{U}[0,1]$, $S_N=S_n(x)$ and (say) $t_1:=\frac{1}{3}$, $t_2:=\frac{2}{3}$. It says that
$$
\Omega(\alpha)=\{x\in\T:\tfrac{S_N(x)-A_N^\ast(x)}{B_N^\ast(x)}\xrightarrow[N\to\infty]{\text{dist}}U[0,1]\},
$$
where
$A_N^\ast(x)$ and $B_N^\ast(x)$ are the unique solutions to \eqref{star-system}.
Since the percentiles of $S_N(x)$ are  measurable as functions of $x$,  $A_N^\ast(x),B_N^\ast(x)$ are measurable as functions of $x$.

We claim that  $\Omega(\alpha)=\Omega_1(\alpha)\cap\Omega_2(\alpha)$ where
\begin{align*}
\Omega_1(\alpha)&:=\bigcap_{\ell=1}^\infty\bigcup_{M=1}^\infty\bigcap_{N=M+1}^\infty
\left\{x\in\T: \frac{1}{N}\sum_{n=1}^N 1_{(2,\infty)}\left(\left|\tfrac{S_n(\alpha,x)-A_N^\ast(x)}{B_N^\ast(x)}\right|\right)<\frac{1}{\ell}\right\}\\
\Omega_2(\alpha)&=\bigcap_{t\in\Q\setminus\{0\}}
\left\{x\in\T:
\lim_{N\to\infty}\frac{1}{N}\sum_{n=1}^N e^{it \left(\frac{S_n(\alpha,x)-A_N^\ast}{B_N^\ast(x)}\right)}
=\E\left(e^{itY}\right)\right\}.
\end{align*}
This will prove the lemma, since the measurability of $A_N^\ast(\cdot),B_N^\ast(\cdot)$ implies the measurability of $\Omega_i(\alpha)$.

If $x\in\Omega(\alpha)$ then $x\in\Omega_1(\alpha)$ because $\Prob[|\frac{S_N(x)-A_N^\ast}{B_N^\ast}|>2]\xrightarrow[N\to\infty]{}0$, and $x\in \Omega_2(\alpha_2)$ because
$\E\bigl(e^{it\bigl(\frac{S_N(x)-A_N^\ast}{B_N^\ast}\bigr)}\bigr)\xrightarrow[N\to\infty]{}
\E\left(e^{itY}\right)$ pointwise.

Conversely, if $x\in \Omega_1(\alpha)\cap\Omega_2(\alpha)$ then it is not difficult to see  that
$\E\bigl(e^{it\bigl(\frac{S_N(x)-A_N^\ast}{B_N^\ast}\bigr)}\bigr)\xrightarrow[N\to\infty]{}
\E\left(e^{itY}\right)$
for {\em all} $t\in\R$. So $x\in\Omega(\alpha)$ by L\'evy's continuity theorem.
Thus $\Omega(\alpha)=\Omega_1(\alpha)\cap\Omega_2(\alpha)$, whence $\Omega(\alpha)$ is measurable.

The proof that $\Omega^\ast(\alpha)$ is measurable is similar. Enumerate $\Q\setminus\{0\}=\{t_n:n\in\N\}$, then $\alpha\in\Omega^\ast(\alpha)$ iff for every $\ell\in\N$ there exist $M\in\N$  s.t. for some  $N>M$
\begin{enumerate}[(1)]
\item $\frac{1}{N}\sum_{n=1}^N 1_{(2,\infty)}\left(\left|\tfrac{S_n(\alpha,x)-A_N^\ast(x)}{B_N^\ast(x)}\right|\right)<\frac{1}{\ell}$
\item $|\E\bigl(e^{it_n\bigl(\frac{S_N(x)-A_N^\ast(x)}{B_N^\ast(x)}\bigr)}\bigr)-\E\left(e^{it_nY}\right)|<\frac{1}{\ell}$ for all $n=1,\ldots,\ell$
\end{enumerate}
These are measurable conditions, because $A_N^\ast(\cdot), B_N^\ast(\cdot)$ are measurable. So $\Omega^\ast(\alpha)$ is measurable.
\end{proof}

\bibliographystyle{alpha}
\bibliography{Bounded-Type-Bib}{}

\def\cprime{$'$}
\begin{thebibliography}{ADDS15}

\bibitem[ADDS15]{Avila-Dolgopyat-Duryev-Sarig}
Artur Avila, Dmitry Dolgopyat, Eduard Duryev, and Omri Sarig.
\newblock The visits to zero of a random walk driven by an irrational rotation.
\newblock {\em Israel J. Math.}, 207(2):653--717, 2015.

\bibitem[ADU93]{Aaronson-Denker-Urbanski}
Jon Aaronson, Manfred Denker, and Mariusz Urba\'nski.
\newblock Ergodic theory for {M}arkov fibred systems and parabolic rational
  maps.
\newblock {\em Trans. Amer. Math. Soc.}, 337(2):495--548, 1993.

\bibitem[AK82]{Aaronson-Keane}
Jon Aaronson and Michael Keane.
\newblock The visits to zero of some deterministic random walks.
\newblock {\em Proc. London Math. Soc. (3)}, 44(3):535--553, 1982.

\bibitem[Bec94]{Beck-Diophantine}
J\'ozsef Beck.
\newblock Probabilistic {D}iophantine approximation. {I}. {K}ronecker
  sequences.
\newblock {\em Ann. of Math. (2)}, 140(1):109--160, 1994.

\bibitem[Bec10]{Beck-Giant-Leap-1}
J{\'o}zsef Beck.
\newblock Randomness of the square root of 2 and the giant leap, {P}art 1.
\newblock {\em Period. Math. Hungar.}, 60(2):137--242, 2010.

\bibitem[Bec11]{Beck-Giant-Leap-2}
J{\'o}zsef Beck.
\newblock Randomness of the square root of 2 and the giant leap, {P}art 2.
\newblock {\em Period. Math. Hungar.}, 62(2):127--246, 2011.

\bibitem[BU]{Bromberg-Ulcigrai}
Michael Bromberg and Corinna Ulcigrai.
\newblock A temporal central limit theorem for real-valued cocycles over
  rotations.
\newblock preprint ArXiv:1705.06484.

\bibitem[CK76]{Conze-Keane-Cylinder-Flow}
Jean-Pierre Conze and Michael Keane.
\newblock Ergodicit\'e d'un flot cylindrique.
\newblock page~7, 1976.

\bibitem[DSa]{Dolgopyat-Sarig-Horocycle-Winding}
Dmitry Dolgopyat and Omri Sarig.
\newblock Asymptotic windings of horocycles.
\newblock {\em to appear in Israel Math. J.}

\bibitem[DSb]{Dolgopyat-Sarig-Quenched}
Dmitry Dolgopyat and Omri Sarig.
\newblock Quenched and annealed temporal limit theorems for circle rotations.
\newblock Preprint (2017).

\bibitem[DS17]{Dolgopyat-Sarig-JSP}
Dmitry Dolgopyat and Omri Sarig.
\newblock Temporal distributional limit theorems for dynamical systems.
\newblock {\em J. Statistical Physics}, 166(3-4):680--713, 2017.

\bibitem[DV86]{Diamond-Vaaler}
Harold~G. Diamond and Jeffrey~D. Vaaler.
\newblock Estimates for partial sums of continued fraction partial quotients.
\newblock {\em Pacific J. Math.}, 122(1):73--82, 1986.

\bibitem[HW08]{HardyWright}
Godfrey~Harold Hardy and Edward~Maitland Wright.
\newblock {\em An introduction to the theory of numbers}.
\newblock Oxford University Press, Oxford, sixth edition, 2008.
\newblock Revised by D. R. Heath-Brown and J. H. Silverman, With a foreword by
  Andrew Wiles.

\bibitem[Jar29]{Jarnik}
Vojtech Jarn\'{\i}k.
\newblock Zur metrischen theorie der diophantischen approximationen.
\newblock {\em Prace Matematyczno-Fizyczne}, 36(1):91--106, 1928-1929.

\bibitem[Khi24]{Khintchine-A.S.-approximable}
Alexander~Ya. Khintchine.
\newblock Einige {S}\"atze \"uber {K}ettenbr\"uche, mit {A}nwendungen auf die
  {T}heorie der {D}iophantischen {A}pproximationen.
\newblock {\em Math. Ann.}, 92(1-2):115--125, 1924.

\bibitem[Khi63]{Khintchine-Continued-Fractions}
Alexander~Ya. Khintchine.
\newblock {\em Continued fractions}.
\newblock Translated by Peter Wynn. P. Noordhoff, Ltd., Groningen, 1963.

\bibitem[PS]{Paquette-Son}
Elliot Paquette and Younghwan Son.
\newblock Birkhoff sum fluctuations in substitution dynamical systems.
\newblock {\em arXiv:1505.01428, preprint (2015)}.

\bibitem[Sch78]{Schmidt-Irregularity}
Klaus Schmidt.
\newblock A cylinder flow arising from irregularity of distribution.
\newblock {\em Compositio Math.}, 36(3):225--232, 1978.

\bibitem[Sul82]{Sullivan-Logarithm-Law}
Dennis Sullivan.
\newblock Disjoint spheres, approximation by imaginary quadratic numbers, and
  the logarithm law for geodesics.
\newblock {\em Acta Math.}, 149(3-4):215--237, 1982.

\end{thebibliography}

\end{document}